\newtheorem{theorem}{Theorem}[section]
\newtheorem{lemma}[theorem]{Lemma}
\newtheorem{corollary}[theorem]{Corollary}
\theoremstyle{definition}
\newtheorem{proposition}[theorem]{Proposition}
\theoremstyle{remark}
\newtheorem{remark}[theorem]{Remark}
\numberwithin{equation}{section}
\def\epi{\varepsilon}
\def\R{\mathbb{R}}
\newcommand{\p}{\partial}
\newcommand{\wt}{\widetilde}
\newcommand{\CT}{\mathcal{T}}
\newcommand{\CX}{\mathcal{X}}
\newcommand{\CM}{\mathcal{M}}
\newcommand{\CH}{\mathcal{H}}
\newcommand{\CW}{\mathcal{W}}
\newcommand{\CG}{\mathcal{G}}
\newcommand{\CF}{\mathcal{F}}
\newcommand{\CL}{\mathcal{L}}
\newcommand{\Pe}{\Pi^{e}_{c,y_{1}}}
\newcommand{\Pee}{\Pi^{e}_{c,y_{2}}}
\newcommand{\J}{JL_{c,y_{1}}}
\newcommand{\JJ}{JL_{c,y_{2}}}
\newcommand{\BR}{\mathbb{R}}
\begin{document}

\title[Center Manifold]{Dynamics near the solitary waves of the supercritical \MakeLowercase{g}KDV Equations}

\author{Jiayin Jin} 
\address{School of Mathematics, Georgia Institute of Technology}
\email{jin@math.gatech.edu}
\author{Zhiwu Lin${}^\dagger$}
\address{School of Mathematics, Georgia Institute of Technology}
\email{zlin@math.gatech.edu}
\thanks{${}^\dagger$Work supported in part by NSF-DMS 1411803 and NSF-DMS 1715201}
 \author{Chongchun Zeng${}^\ddagger$}
 \address{School of Mathematics, Georgia Institute of Technology}
\email{chongchun.zeng@math.gatech.edu}
\thanks{${}^\ddagger$Work supported in part by NSF-DMS 1362507}

\subjclass[2000]{}

\date{}

\dedicatory{}

\keywords{}

\begin{abstract}
This work is devoted to study the dynamics of the supercritical gKDV equations near solitary waves in the energy space $H^1$. We construct smooth local center-stable, center-unstable and center manifolds near  the manifold of solitary waves and give a detailed description of the local dynamics near solitary waves. In particular, the instability is characterized as following: any forward flow not starting from the center-stable manifold will leave a neighborhood of the manifold of solitary waves exponentially fast. Moreover, orbital stability is proved on the center manifold, which implies the uniqueness of the center manifold and the global existence of solutions on it. 
\end{abstract}

\maketitle


\section{Introduction}
We consider the supercritical gKDV equation
\begin{equation}\label{gkdv}
u_{t}+(u_{xx}+u^{k})_{x}=0, \qquad k>5, \qquad u\in H^{1}(\R).
\end{equation}
The cases of the integer $k<5$, $k=5$, and $k>5$  are referred to as the subcritical, critical,  and supercritical cases, respectively. The well-posedness of \eqref{gkdv} is classical (see \cite{KPV} and \cite{K}). The subcritical gKDV equation are globally well-posed in $H^{1}$, while the critical and supercritical gKDV are locally well-posed. 
 
Blow-up solutions have been obtained in the critical case by Martel and Merle \cite{MM1} and the slightly supercritical case of $5<k<5+\epsilon$ by Lan \cite{Lan}.

The gKDV equation has a Hamiltonian form $u_{t}=JE'(u),$ where $J=\partial_{x}$ is the symplectic operator and $$E(u)=\int_{\mathbb{R}}\frac{1}{2}u^{2}_{x}-\frac{1}{k+1}u^{k+1}dx$$ is the conserved energy. Due to the translationl invariance, the momentum $$P(u)=\frac{1}{2}\int_{\mathbb{R}}u^{2} dx$$ is also conserved. Moreover, the gKDV equation is  invariant under the scaling 
\begin{equation} \label{E:scaling}
(\CT^{\lambda}u)(t,x)= \displaystyle \lambda^{\frac{2}{k-1}}u(\lambda^{3}t, \lambda x).
\end{equation}

The linear dispersion and nonlinear effect interact to produce  solitary waves,  $u_{c}(x,t)= Q_{c}(x-ct)$, where $$ Q_{c}(x)=\left(\CT^{\sqrt{c}}Q\right)(x)=c^{\frac{1}{k-1}}Q(\sqrt{c}x)$$ 
with 
$$Q(x)=\Big(\frac{k+1}{2}sech^{2}\big(\frac{k-1}{2}x\big)\Big)^{\frac{1}{k-1}} \in H^1$$ 
being the unique positive even solution to 
\begin{equation}
Q_{xx}-Q+Q^{k}=0, \quad Q(\pm \infty) =0. 
\end{equation}

These solitary waves  play a fundamental role in the dynamics of the gKDV equation. The stability of the solitary waves has been studied extensively.   For the subcritical gKDV equation,  solitary waves are orbitally stable, see \cite{B,BSS,Bo,W}. Furthermore, for $k=2,3$ Pego and Weinstein \cite{PW} proved asymptotic stability of the whole family of solitary waves for initial data with exponential spatial decay at $\infty$.  Mizumachi\cite{Mi} proved asymptotic stability of  the whole family of solitons for initial date with algebraic spatial decay at $\infty$ for $k=2,3,4$. Martel and Merle \cite{MM2} proved asymptotic stability in weak topology for the subcritical gKDV equation for initial data in $H^{1}$, that is for any $\delta>0$, there exists $\alpha$, such that for any $u_{0}$ satisfying $\|u_{0}-Q_{c}\|_{H^{1}}\le \alpha$, there exists $c(t), x(t)$, such that $\left(u(t,\cdot+x(t))-Q_{c(t)}\right) \rightharpoonup 0$ in $H^1$ as $t\rightarrow \infty$. 

For the critical case, in a series of works \cite{MMR1, MMR2,MMR3}, Martel, Merle and Rapha{\"e}l classified the dynamics for a set of initial data $$\mathcal{A}=\{u_{0}=Q+v: \|v\|_{H^{1}}\le \alpha_{0}, \int_{x>0}x^{10}v^{2}(x)dx<1\}.$$ More specifically, the solutions with initial data in $\mathcal{A}$ are classified into three classes: (i) blow up in finite time; (ii) exist globally in time and stay close to the orbits of solitary waves for any $t>0$; (iii) exist globally and exit a neighborhood of the traveling wave manifold. 

Recently, Martel, Merle, Nakanishi and Rapha{\"e}l constructed a co-dimension 1 threshold manifold separating the initial data satisfying (i) and (iii), and showed that the solutions with initial data on the threshold manifold belong to (ii).

For the supercritical gKDV equations, Bona, Souganidis and Strauss \cite{BSS} proved the solitary waves are orbitally unstable. Namely there exist solutions starting arbitrarily close to the traveling wave manifold, but eventually go away. Combet \cite{Com} constructed special solutions converge to solitary waves exponentially fast as $t\rightarrow +\infty$ in $H^{1}$. 

Naturally, one may raise the question: whether there exist solutions starting near solitary waves behaving differently than the above two types?  Furthermore, how are all these different type of solutions organized/located in the energy space $H^1$ near the traveling waves?

In this work, we give a detailed description of the local dynamics of the supercritical gKDV equation near the soliton manifold 
$$\mathcal{M}=\{Q_{c}(\cdot+y): c>0, y\in \mathbb{R}\}. $$
\begin{enumerate}
\item {\it Existence (Section 4) and smoothness (Section 5) of local invariant manifolds of $\CM$ in $H^{1}$}: 
\begin{itemize}
\item There exist  co-dimension $1$ center-stable and center-unstable manifolds $\CW^{cs}(\mathcal{M})$ and $\CW^{cu}(\mathcal{M})$ of $\mathcal{M}$, respectively, such that $\CM \subset \CW^{cs, cu}$ and for any $m\ge 1$, there exist neighborhoods of $\CM$ where $\CW^{cs, cu} (\CM)$ are $C^m$ submanifolds. 
\item Moreover, $\CW^{cs}(\CM)$ and $ \CW^{cu}(\CM)$intersect transversally along the center manifold $\CW^c(\CM) =\CW^{cs}(\CM) \cap \CW^{cu}(\CM)$ which is a smooth co-dimension 2 submanifold. 
\item $\CW^{cs, cu, c}(\CM)$ are invariant under spatial translation and rescaling \eqref{E:scaling}.
\item These manifolds $\CW^{cs, cu, c}(\CM)$ are locally invariant under the flow of \eqref{gkdv}. Namely, an orbit starting on $\CW^{cs, cu, c}(\CM)$  can leave them only through their boundaries.

\end{itemize}

\item ({\it Local dynamics near traveling waves}) 
\begin{itemize}
\item $\CW^{cs} (\CM)$ (or $\CW^{cu} (\CM)$, or $\CW^{c} (\CM)$, respectively) is the set of initial data whose orbits under \eqref{gkdv} stay close to $\CM$ for all $t\ge 0$ (or $t\le 0$, or $t\in \R$, respectively). ({\bf Propositions} \ref{characterization}, \ref{offcs}, and \ref{offcu}) 
\item If the initial data is not on $\CW^{cs}(\mathcal{M})$ (or $\CW^{cu}(\mathcal{M})$), then the forward (or backward) orbit  exits a neighborhood of $\mathcal{M}$ exponentially fast. {\bf Propositions} \ref{offcs} and \ref{offcu})

\item $\CW^c(\CM)$ is exponentially stable on $\CW^{cs}(\CM)$ as $t\to +\infty$ and on $\CW^{cu}(\CM)$ as $t\to -\infty$. ({\bf Propositions} \ref{attract-center})
\item $\CM$ is orbitally stable on $\CW^c(\CM)$ in the sense that, for any neighborhood $\mathcal{U}\subset \CW^c(\CM)$ of $\CM$, there exists a neighborhood $\mathcal{V} \subset \mathcal{U}$ such that orbits starting in $\mathcal{V}$ stay in $\mathcal{U}$ for all $t \in \R$. ({\bf Propositions} \ref{attract-center})\end{itemize}
\end{enumerate}

\begin{remark} 
In this paper we focus on the center-stable, center-unstable, and center manifolds of the 2-dimensional traveling wave manifold $\CM$. The stable and unstable manifold of the latter should follow from an easier (see Remark \ref{R:stable-unstable}) construction and would be carried out in a separate paper.
\end{remark} 

Let us briefly outline our proof.  As a convention, we write the gKDV equation in the traveling frame $(t, x-ct)$ with a fixed wave speed $c$ and let $u(t,x)= U(t,x-ct)$, where $U(t,x)$ satisfies $$U_{t}-cU_{x}+(U_{xx}+U^{k})_{x}=0. \hspace{0.1in}(gKDV-Tr)$$ Clearly $Q_c$, the profile of the traveling wave, is an equilibrium of (gKDV-Tr).  The linearization of (gKDV-Tr) at $Q_{c}$ takes the Hamiltonian form of 
\[
U_{t}=JL_{c}U, \quad \text{ where } \quad J=\partial_{x}, \; \;  L_{c}=c-\partial_{xx}-kQ_{c}^{k-1}.  
\]
Thanks to this Hamiltonian structure,  the energy space $H^{1}$ can be decomposed into three invariant subspaces (under the linearized flow $e^{tJL_{c}}$) 
$$X=X^{+}\oplus X^{-}\oplus X^{c}, \quad \text{ where } X^\pm = span\{V^\pm\}, \; \; JL_c V^\pm =\pm \lambda_c V^\pm, \; \; \lambda_c>0.$$ 
Here 
$X^{c}=X^{T}\oplus X^{e}$ is the center space, where $X^{T}=span\{\partial_{x}Q_{c}\} = \ker(JL_c)$ and $L_{c}$ is uniformly positive definite on $X^{e}$. Furthermore, the following trichotomy holds
\begin{equation}
\begin{split}
&\Vert e^{JL_{c}t}\vert_{X^\pm}\Vert \le e^{\pm \lambda_c t}, \hspace{0.1 in } \text{for}\; \mp t\ge 0\\
&\Vert e^{JL_{c}t}\vert_{X^{c}}\Vert \le M(1+t), \hspace{0.1 in } \text{for}\; t\in \mathbb{R}.
\end{split}
\end{equation}

The linearized dynamic structure described by this trichotomy serves as the cornerstone of the study on the  nonlinear dynamics, with the bridge classically provided by the invariant manifold theory for ODEs and PDEs (mainly for semilinear PDEs). Roughly, the linear trichotomy in the phase space along with nonlinear terms being smooth mappings from the phase space to itself imply that there exist nonlinearly locally invariant submanifolds very close to the invariant subspaces. 
However, this classical theory  does not apply to gKDV directly as its nonlinearity  contains a loss of derivative.  

Fortunately, the linear flow $e^{-t\p_{xxx}}$ has a smoothing effect, which may still allow the stable and unstable manifolds of $Q_c$ to be constructed through a modification to the classical approaches. Since the stable manifold and unstable manifold are unique for each $Q_{c}(\cdot+y)$ and extend in transversal directions of $\CM$, therefore one can construct the stable and unstable manifolds for $Q_{c}$ first and then translate them along $\mathcal{M}$ to form the stable and unstable manifolds of the whole $\mathcal{M}$.

Compared to stable and unstable manifolds, there is an additional difficulty in the construction of invariant manifolds containing center directions. 
Unlike stable and unstable manifold, center manifolds usually are not unique and they extend in the direction of $\CM$, therefore one can not translate center manifolds of $Q_{c}$ to obtain the ones of $\mathcal{M}$. As $\mathcal{M}$ should be contained in the center manifolds, so it is reasonable to attempt to construct the center manifolds of the whole $\mathcal{M}$  directly. This brings up an issue how to set up a suitable coordinate system in a neighborhood of $\mathcal{M}$.  A tempting choice is to use the translational parametrization to write any $U$ in a tubular neighborhood of $\mathcal{M}$ as 
$$U=\phi(y,a^+,a^-, V^{e})=(Q_{c}+a^+V^{+}+a^-V^{-}+W^{e})(\cdot+y), \quad W^e \in X^e.$$ 

However, this translational parametrization is not smooth in $H^{1}$. To see this, we take the derivative of $\phi$ with respect to $y$ and a term $\partial_{y}V^{e}(\cdot+y)\in L^2$ appears, while the other terms in $\partial_{y}\phi$ are regular enough. This was also the main difficulty in the work of Nakanishi and Schlag\cite{NS1}, where the authors constructed the center-stable manifolds of the manifold of ground states for the Klein-Gordon equation. They constructed a clever nonlinear  ``optimal mobile distance"  to overcome this difficulty. In this paper, we follow the approach as in \cite{BLZ08, JLZ} to utilize a smooth bundle coordinate system. Namely, any $U$ in a tubular neighborhood of $\mathcal{M}$ is written as 
$$U=\psi(y,a^+, a^-,V^{e})= (Q_{c}+ a^+V^{+}+a^-V^{-})(\cdot+y)+V^{e}, \quad  V^e \in X_y^e=\{u: u(\cdot-y)\in X^{e}\}.$$ 
 
Since $Q_c$ and $V^\pm$ are 
smooth functions, the corresponding projection $\Pi^e_{y}: H^1 \to X_y^e$ with $\ker \Pi_y^e = span\{\p_x Q_c (\cdot +y), V^\pm(\cdot +y)\}$ is smooth in $y$. Consequently, 
$\widetilde{X}^{e}= \{(y,V): V\in X_{y}^{e}\}$ is a smooth bundle over $y\in \mathbb{R}$. We rewrite (gKDV-Tr) using this smooth bundle coordinate system $\psi$. Even though some geometric notions are involved, we still manage to obtain certain desired smoothing estimates.  

 Then we are able to perform Lyapunov-Perron method to construct invariant manifolds of the soliton manifold, which help to reveal a rather complete picture of the local dynamic structure near the soliton manifold. In particular, the orbital stability of $\CM$ on the center manifold is obtained from a Lyapunov functional argument based on the fact that $Q_{c}$ is a critical point of the energy momentum functional $E-cP$ whose Hessian is uniformly positive definite in $X^e$.  
The orbital stability on center manifolds yields characterizations (Proposition \ref{characterization}) of the center-stable, center-unstable, and center manifolds of $\CM$, which in turn lead to their local uniqueness. 

Consequently, any solution $u(t,x)$ on the center manifold close to $\CM$ satisfies the assumption in Theorem 1 in \cite{MM3} and thus there exist $c_0 >0$ and functions $c(t)$ and $\rho (t)$,  $t>0$, such that 
\[
\Vert u(t) - Q_{c(t)} (\cdot - \rho(t)\Vert_{H_x^1 (x >  \frac {c_0}{10} t)} \to 0, \quad \text{ as } t \to \infty.  
\]

There are some previous results on the construction of invariant manifolds for semilinear PDEs. Bates and Jones \cite{BJ} proved a general theorem for the existence of local invariant manifolds of equilibria for semilinear PDEs by the graph transform, and then applied it to the Klein-Gordon equation in the radial setting. In \cite{Sc}, Schlag constructed a co-dimension $1$ center-stable manifold of the manifold of ground states for 3D cubic NLS in $W^{1,1}(\mathbb{R}^{3})\bigcap W^{1,2}(\mathbb{R}^{3})$ under an assumption that the linearization of NLS at each ground state has no eigenvalue embedded in the essential spectrum and proved scattering on the center-stable manifold.  Later, this result was improved by Beceanu \cite{Be1, Be2} who constructed  center-stable manifolds in $W^{1,2}(\mathbb{R}^{3})\bigcap |x|^{-1}L^{2}(\mathbb{R}^{3})$ and in critical space $\dot{H}^{1/2}(\mathbb{R}^{3})$. Similar results were obtained in Krieger and Schlag \cite{KS} for the supercritical 1D NLS. Nakanishi and Schlag \cite{NS4} constructed a center-stable manifold of ground states for 3D cubic NLS in the energy space with a radial assumption by using the framework in Bates and Jones \cite{BJ}. Nakanishi and Schlag \cite{NS1} constructed center-stable manifolds of ground states for nonlinear Klein-Gordon equation without radial assumption. Also, see \cite{NS3, KNS1,KNS2,KNS3} for related results. To the best of our knowledge, this current work is the first one constructing invariant manifolds of a global soliton manifold for a dispersive PDE with derivative nonlinearities. Our approach, involving using the bundle coordinates and deriving space-time estimates with small exponential growth, seems to be rather general and, with minimal essential modifications, applicable to unstable relative equilibria (including ground and excited states) of a class of Hamiltonian PDEs with natural symmetries (see also \cite{JLZ}). 

This paper is organized as following. In Section \ref{bundlecoordinates}, we establish bundle coordinates over the soliton manifold and rewrite the equations. In Section \ref{LinearAnalysis}, we derive smoothing space-time estimates in the bundle coordinates and then prove several apriori estimates. In Section \ref{construction}, we construct Lipschitz invariant manifolds of the soliton manifold, whose smoothness is proved in Section \ref{smoothness of invariant manifolds}. 
In Section \ref{classification}, we analyze the local dynamics near soliton manifold by invariant manifolds.  

\noindent {\bf A remark on notations.} Throughout the paper, $\langle \cdot, \cdot \rangle$ denotes the dual pairing between elements of a Banach space and its dual space. The generic upper bound $C$ may depend on $c>0$, but not other phase space variables or parameters, unless specified.

\section{A Bundle Coordinate System along the soliton manifold}\label{bundlecoordinates}
\subsection{Linear Decomposition and local coordinates near solitary waves }
Define the soliton manifold consisting of translations of all solitons of \eqref{gkdv} as 
\begin{equation}
\mathcal{M}=\{Q_{c}(\cdot+y): c\in \R^+, y\in \mathbb{R}\}. 
\end{equation} 

To study the dynamics near the travel wave with traveling speed $c>0$, we rewrite \eqref{gkdv} in the traveling frame by letting $u(t,x)=U(t,x-ct)$ which satisfies
\begin{equation}\label{eqintf}
U_{t}-cU_{x}+(U_{xx}+U^{k})_{x}=0.
\end{equation}
For any $y \in \BR$, $Q_c(\cdot +y)$ becomes an equilibrium of \eqref{eqintf}. Linearizing \eqref{eqintf} at $Q_{c}(\cdot+y)$, one has
\begin{equation}\label{le}
U_{t}=JL_{c,y}U,
\end{equation}
where 
\begin{equation*}
J=\partial_{x}, \hspace{0.1 in}  L_{c,y}=c-\partial_{xx}-kQ_{c}^{k-1}(\cdot+y) = (cP + E)'' \big(Q_c(\cdot +y)\big) \in \CL(H^1, H^{-1}). 
 \end{equation*}
For convenience, we let $L_{c}:=L_{c,0}$.  Up to a scalar multiplication, $JL_{c}$ are  conjugate to each other for different $c>0$, through the rescaling 
\begin{equation}\label{scaling}
JL_{c}\CT_0^{\sqrt{c}}U= c^{\frac{3}{2}}\CT_0^{\sqrt{c}}JL_{1}U, \; \text{ where } \; (\CT_0^{\lambda}U)(x) = \lambda^{\frac 2{k-1}} U(\lambda x), 
\end{equation}
and $L_{c,y}$ is conjugate to $L_{c}$ through the translation 
\begin{equation}\label{translation}
L_{c,y}U= \left(L_{c}U(\cdot-y)\right)(\cdot+y).
\end{equation}

\begin{lemma}\label{de1}
For any $c>0$, there exists closed subspaces $X_{c}^{T,e,+,-}$ such that 
\begin{enumerate}
\item $H^{1}= X_{c}^{T}\oplus X_{c}^{e}\oplus X_{c}^{+}\oplus X_{c}^{-}$associated with bounded projection $\Pi_{c}^{T,e,+,-}$;
\item $X_{c}^{T}=\ker L_c =span\{\partial_{x}Q_{c}\}$;
\item $X_{c}^{\pm}=span\{V_{c}^{\pm}\}$, with 

$JL_{c}V_{c}^{\pm}=\pm \lambda_{c}^{\pm}V_{c}^{\pm}$ with $\lambda_{c}= c^{\frac 32} \lambda_1>0$. Moreover $V_c^\pm \in C^\infty$ and, for any $l \ge 0$, $\p_x^l V_c^\pm \to 0$ exponentially as $|x| \to \infty$; 
\item $\p_c Q_c \in X_c^e$ and there exists $A_{c}>0$ such that $\langle L_{c}V^{e},V^{e}\rangle\ge A_{c}\Vert V^{e}\Vert^{2} _{H^{1}(\R)}$ for any $V^{e}\in X_{c}^{e}$. 
\item In this decomposition, $L_c$ and $JL_{c}$ take the following forms 
\begin{equation}\label{E:JL-block-0}
L_c \longleftrightarrow \begin{pmatrix} 0 &0 &0&0\\0&L_c^{e}&0&0\\0&0&0&1\\0&0&1&0\end{pmatrix}, \qquad 
JL_{c}\longleftrightarrow \begin{pmatrix} 0 &A_{Te}&0&0\\0&A_{e}&0&0\\0&0&\lambda_c &0\\0&0&0&- \lambda_c\end{pmatrix},
\end{equation}
where
\[
L_c^e = (\Pi_c^e)^* L_c \Pi_c^e, \quad A_{e}=\Pi_{c}^{e}JL_{c}\Pi_{c}^{e}, \quad  A_{Te}= \Pi_{c}^{T}JL_{c}\Pi_{c}^{e}. 
\]
\end{enumerate}
\end{lemma}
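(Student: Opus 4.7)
The plan is to reduce everything to $c=1$ via the conjugacy \eqref{scaling}, which makes $JL_c$ similar to $c^{3/2}JL_1$ on $H^1$: eigenvalues scale as $c^{3/2}$ and eigenfunctions as $\CT_0^{\sqrt c}$, so the general-$c$ decomposition is the pullback of the one at $c=1$. I would start with the self-adjoint operator $L_c$ on $L^2$. Differentiating the profile equation in $x$ gives $L_c\p_xQ_c=0$; Sturm-Liouville theory, the one-zero property of $\p_xQ_c$, and Weyl's theorem (using exponential decay of $Q_c$) together identify $\ker L_c=\text{span}\{\p_xQ_c\}$, essential spectrum $[c,\infty)$, exactly one simple negative eigenvalue, and positivity elsewhere. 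Differentiating the profile equation in $c$ produces the key identity $L_c\p_cQ_c=-Q_c$, which in the supercritical regime $k>5$ gives $\langle L_c\p_cQ_c,\p_cQ_c\rangle=-\tfrac12\tfrac{d}{dc}\|Q_c\|_{L^2}^2>0$.

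Next I would analyze $\sigma(JL_c)$. The Hamiltonian structure imposes the symmetry $\lambda\mapsto\pm\lambda,\pm\bar\lambda$; the essential spectrum coincides with that of $J(c-\p_x^2)$ by relative compactness of $kJ(Q_c^{k-1}\,\cdot)$ and hence lies on $i\R$. Existence of a simple real eigenvalue $\lambda_c>0$ with eigenfunction $V_c^+$ (and its mirror $-\lambda_c$ with $V_c^-$) is the classical supercritical instability result of Bona-Souganidis-Strauss and Pego-Weinstein; smoothness and exponential decay of $V_c^\pm$ follow by bootstrapping the third-order ODE for $V$ and matching against the unique decaying root of $-\mu^3+c\mu=\pm\lambda_c$ in each tail. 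Scaling forces $\lambda_c=c^{3/2}\lambda_1$.

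With the spectrum understood, I would assemble the splitting. Define $X_c^\pm=\text{span}\{V_c^\pm\}$, $X_c^T=\text{span}\{\p_xQ_c\}$, and
\begin{equation*}
X_c^e=\{V\in H^1:\langle L_cV_c^+,V\rangle=\langle L_cV_c^-,V\rangle=\xi(V)=0\},
\end{equation*}
where $\xi\in(H^1)^*$ is any bounded functional annihilating $V_c^\pm$ and $\p_cQ_c$ but pairing nontrivially with $\p_xQ_c$. The vanishing cross-pairings $\langle L_cV_c^\pm,\p_xQ_c\rangle=\langle V_c^\pm,L_c\p_xQ_c\rangle=0$ and $\langle L_cV_c^\pm,\p_cQ_c\rangle=-\langle V_c^\pm,Q_c\rangle=0$ (the last obtained from $\p_x(L_cV_c^\pm)=\pm\lambda_cV_c^\pm$ via integration by parts against $Q_c$ and $L_c\p_xQ_c=0$) then yield the direct-sum decomposition $H^1=X_c^T\oplus X_c^e\oplus X_c^+\oplus X_c^-$ with bounded projections, and arrange $\p_cQ_c\in X_c^e$. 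Positivity of $L_c$ on $X_c^e$ is the standard Grillakis-Shatah-Strauss coercivity: the single negative direction of $L_c$ is absorbed by $X_c^+\oplus X_c^-$ (where the form has signature $(1,1)$ since $\langle L_cV_c^\pm,V_c^\pm\rangle=0$ while $\langle L_cV_c^+,V_c^-\rangle\ne 0$), the null direction sits in $X_c^T$, and $\langle L_c\p_cQ_c,\p_cQ_c\rangle>0$ contributes positively, so a constrained minimization over $X_c^e$ combined with compactness of the potential $kQ_c^{k-1}$ yields $\langle L_cV^e,V^e\rangle\ge A_c\|V^e\|_{H^1}^2$.

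The block form \eqref{E:JL-block-0} is then an immediate readout: $JL_c$ preserves $X_c^\pm$ with eigenvalues $\pm\lambda_c$, annihilates $X_c^T$, and sends $X_c^e$ into $X_c^T\oplus X_c^e$ with the only nonzero coupling $A_{Te}(\p_cQ_c)=-\p_xQ_c$. The \emph{main obstacle} is the spectral analysis of $JL_c$ itself — precisely, ruling out non-imaginary eigenvalues away from $\pm\lambda_c$, establishing their simplicity, and pinning down the algebraic multiplicity $2$ at the origin — which I would handle by citing the supercritical instability literature and an auxiliary Hamiltonian-Krein index count.
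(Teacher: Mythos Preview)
Your proposal is correct and follows essentially the same route as the paper: cite the spectral input on $L_c$ and $JL_c$ from \cite{PW2}, split off $X_c^\pm$ as the $L_c$-orthogonal complement of $\text{span}\{V_c^+,V_c^-\}$, carve out $X_c^T$ from the remainder by one further linear constraint, and deduce coercivity on $X_c^e$ from a signature count (the $(1,1)$ signature of $L_c$ on $X_c^+\oplus X_c^-$ absorbs the single negative direction). Two minor remarks: the paper fixes the third defining functional to be the concrete $L^2$ pairing $\langle\partial_xQ_c,\cdot\rangle$ rather than your generic $\xi$, which is what yields the explicit projection formulas \eqref{E:apm}--\eqref{E:Pi} used throughout the rest of the paper, and it then gets $\partial_cQ_c\in X_c^e$ from the evenness identity $\langle\partial_xQ_c,\partial_cQ_c\rangle=0$ rather than by building the condition into $\xi$; and your invocation of $\langle L_c\partial_cQ_c,\partial_cQ_c\rangle>0$ is superfluous for the coercivity, since the $L_c$-orthogonality of the decomposition together with the signature count already forces $L_c\ge A_c$ on $X_c^e$.
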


\begin{proof}
In \cite{PW2}, it was shown that $\ker L_c = span\{\p_x Q_c\}$ and all spectra of $JL_{c}$ belong to $i\R$ except one algebraically  simple positive eigenvalue $\lambda_{c}$ and one algebraically simple negative eigenvalue $-\lambda_{c}$ with corresponding eigenfunctions denoted by $V_{c}^{+}$ and $V_{c}^{-}$. Moreover 
\begin{equation} \label{E:temp-1}
\langle L_c V_c^+, V_c^+\rangle = \langle L_c V_c^-, V_c^-\rangle=0, \quad  \langle L_c V_c^+, V_c^-\rangle =1, 
\end{equation}
where $\langle L_c V_c^+, V_c^-\rangle =1$ follows from a possible simple rescaling. 

Since $span\{V_c^+, V_c^-\}$ is invariant under $JL_c$, it is easy to verify directly that its $L_c$-orthogonal complement  
$$Y:=\{v\in H^{1}: \langle L_{c}V_{c}^{+}, v \rangle =\langle L_{c}V_{c}^{-}, v \rangle =0\}$$
is also invariant under $JL_c$. Moreover, \eqref{E:temp-1} implies $\langle L_c \cdot, \cdot \rangle$ is non-degenerate on $span\{V_c^+, V_c^-\}$ and thus $H^1 = span\{V_c^+, V_c^-\} \oplus Y$. Clearly, $X_{c}^{T}\subset Y$. Let 
$$X_c^e=\{v\in Y: \langle \partial_{x}Q_{c}, v \rangle=0\}.$$ 
The block form \eqref{E:JL-block-0} follows directly from the definition of the subspaces. 

In the next we give the explicit forms of the associated projection operators. Any $V\in H^{1}$ can be decomposed as 
\begin{equation}\label{decompv}
V=a^{+}V_{c}^{+} + a^{-}V_{c}^{-} + a^{T}\partial_{x}Q_{c}+V^{e}, 
\end{equation}
where $V^{e}\in X_{c}^{e}$. 
Applying $L_{c}V_{c}^{-}$ and $L_{c}V_{c}^{+}$ to \eqref{decompv}, respectively, we obtain
$$a^{+}=\langle L_{c}V_{c}^{-}, V\rangle,
\quad a^{-}=\langle L_{c}V_{c}^{+}, V\rangle.
$$
Applying $\partial_{x}Q_{c}$ to \eqref{decompv}, we have 
$$a^{T}=\Vert \partial_{x}Q_{c}\Vert_{L^2}^{-2}\left(\langle \partial_{x}Q_{c}, V\rangle- a^{+}\langle \partial_{x}Q_{c}, V_{c}^{+}\rangle-a^{-}\langle\partial_{x}Q_{c}, V_{c}^{-}\rangle\right).$$
Clearly 
\[
\Pi_c^T V = a^T \p_x Q_c, \quad \Pi_c^\pm V = a^\pm V_c^\pm, \quad \Pi_c^e = I - \Pi_c^T - \Pi_c^+ - \Pi_c^-. 
\]
As $\p_x Q_c \in D(J^*) = D(J)$, clearly $A_{Te}= \Pi_{c}^{T}JL_{c}\Pi_{c}^{e}$ is bounded. 

Since, for any $c>0$, $Q_c$ satisfies 
\[
(cP+E)'(Q_c) =0 \Longrightarrow J L_c \p_c Q_c = - J P'(Q_c) = - \p_x Q_c, 
\]
we have 
\[
\langle L_{c}V_{c}^\pm, \p_c Q_c \rangle = \pm \lambda_c^{-1} \langle L_{c} J L_c V_{c}^\pm, \p_c Q_c \rangle =\pm \lambda_c^{-1} \langle L_c V_{c}^\pm, \p_x Q_c \rangle =0
\]
and thus $\p_c Q_c \in Y$. Due to the evenness of $Q_{c}$, it is clear that $\langle \partial_{x}Q_{c}, \partial_{c}Q_{c} \rangle=0$, which implies $\partial_{c}Q_{c}\in X_c^e$. 

To complete the proof of the lemma, we show that uniform positivity of $\langle L_c^e \cdot, \cdot \rangle$. As $L_c$ is a relatively compact perturbation to the uniformly positive operator $c- \p_{xx}$ on $H^1$, it is uniformly positive except in possibly finite many directions. Since $\ker L_c = span \{\p_x Q_c\}$ and $\p_x Q_c$ changes sign exactly once, $L_c$ has only 1-dim negative direction and 1-dim kernel. From \eqref{E:temp-1}, $L_c$ has one negative and one positive directions in $span\{V_c^+, V_c^-\}$ and \[H^1 = span\{V_c^+, V_c^-\} \oplus \ker L_c \oplus X_c^e\] is an $L_c$-orthogonal decomposition, therefore there exists $A_c>0$ such that
\[\langle L_cV^e, V^e\rangle \ge A_{c}\Vert V^{e}\Vert^{2} _{H^{1}(\R)}\] for any $V^e\in X_c^e$. 
This is a special and rather explicit case of the general problems studied in \cite{LZ}.  
\end{proof}

For any $y\in \mathbb{R}$ and $\alpha\in \{T,e,+,-\}$, define $$X_{c,y}^{\alpha}=\{v\in H^{1}\vert v(\cdot-y)\in X_{c}^{\alpha}\}.$$ Clearly, 
$$H^{1}= X_{c,y}^{T}\oplus X_{c,y}^{e}\oplus X_{c,y}^{+}\oplus X_{c,y}^{-}.$$

\begin{lemma}\label{decompvb}
\begin{enumerate}
\item $X_{c,y}^{T}=span \{\partial_{x}Q_{c}(\cdot+y)\}=\ker JL_{c,y}$.
\item  $X_{c,y}^{\pm}=span\{V_{c}^{\pm}(\cdot+y)\}$ and $JL_{c,y}V_{c}^{\pm}(\cdot+y)=\pm \lambda_{c} V_{c}^{\pm}(\cdot+y)$.
\item $\p_c Q_c(\cdot +y) \in X_{c, y}^e$ and 
$\langle L_{c,y}V^{e},V^{e}\rangle\ge A_{c}\Vert V^{e}\Vert^{2} _{H^{1}(\R)}$ for any $V^{e}\in X_{c,y}^{e}$.
\item The associated bounded projection operators $\Pi_{c,y}^{\alpha}$ are smooth in $c,y$ for $\alpha=+,-,T,e$.
\item In the decomposition $H^{1}= X_{c,y}^{T}\oplus X_{c,y}^{e}\oplus X_{c,y}^{+}\oplus X_{c,y}^{-}$, $L_{c, y}$ and $JL_{c,y}$ take the form
\begin{equation}\label{JL-block}
L_{c, y} \longleftrightarrow \begin{pmatrix}  0 & 0 &0&0\\0&L_{c, y}^{e}&0&0\\0&0&0&1\\ 0&0&1&0\end{pmatrix}, \quad 
JL_{c,y}\longleftrightarrow \begin{pmatrix} 0 &A_{Te}(y)&0&0\\0&A_{e}(y)&0&0\\0&0&\lambda_c&0\\0&0&0&-\lambda_c\end{pmatrix}. 
\end{equation}
where
\[
L_{c, y}^e = (\Pi_{c,y}^e)^* L_{c, y} \Pi_{c, y}^e, \quad A_{e}(y) =\Pi_{c, y}^{e}JL_{c, y}\Pi_{c, y}^{e}, \quad  A_{Te}(y) = \Pi_{c, y}^{T}JL_{c, y}\Pi_{c, y}^{e}. 
\]
\item All above blocks are translation invariant in the sense of \eqref{translation}. Moreover, for any $k_1, k_2 \ge 0$ with $L_{c, y}^e + \p_{xx} \in \CL(H^1)$ and $A_{Te}(y), A_e(y) +\p_{xxx}  \in \CL(H^1, L^2)$ depend on $c>0$ and $y\in \R$ smoothly.  
\end{enumerate}
\end{lemma}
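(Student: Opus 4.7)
The strategy is to transfer everything from Lemma \ref{de1} via the translation conjugacy \eqref{translation}. Let $\tau_y: H^1 \to H^1$ denote the isometry $(\tau_y u)(x) = u(x+y)$. Since $J = \p_x$ commutes with $\tau_y$ and \eqref{translation} gives $L_{c,y} = \tau_y L_c \tau_{-y}$, we have $JL_{c,y} = \tau_y (JL_c) \tau_{-y}$, and by the definition of $X_{c,y}^\alpha$ we have $X_{c,y}^\alpha = \tau_y X_c^\alpha$ with associated projections $\Pi_{c,y}^\alpha = \tau_y \Pi_c^\alpha \tau_{-y}$.

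Items (1)--(3) and (5) follow at once from this conjugation picture. For (1), $\ker JL_{c,y} = \tau_y \ker JL_c = \mathrm{span}\{\p_x Q_c(\cdot + y)\}$; for (2), $JL_{c,y} (\tau_y V_c^\pm) = \tau_y (JL_c V_c^\pm) = \pm \lambda_c \tau_y V_c^\pm$; for (3), $\tau_y \p_c Q_c = \p_c Q_c(\cdot + y) \in \tau_y X_c^e = X_{c,y}^e$, and the coercivity follows from
\[
\langle L_{c,y} V^e, V^e\rangle = \langle L_c \tau_{-y}V^e, \tau_{-y}V^e\rangle \ge A_c \|\tau_{-y} V^e\|_{H^1}^2 = A_c \|V^e\|_{H^1}^2
\]
since $\tau_{\pm y}$ preserves the $H^1$-norm; and the block form \eqref{JL-block} is the $\tau_y$-conjugation of \eqref{E:JL-block-0}. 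The translation invariance in (6) is just a restatement of this conjugacy.

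The delicate point is the joint smoothness in $(c,y)$ in (4) and (6). Although $y \mapsto \tau_y u$ is only strongly continuous on a generic $u \in H^1$, the explicit projection formulas in the proof of Lemma \ref{de1} show that each $\Pi_{c,y}^\alpha$ is a finite sum of terms of the form $V \mapsto \langle \tau_y \phi,\, V\rangle\, \tau_y \psi$, where $\phi, \psi \in \{L_c V_c^\pm,\ \p_x Q_c,\ V_c^\pm\}$ are smooth, exponentially decaying functions depending smoothly on $c > 0$ (using the explicit formula for $Q$ with the scaling $Q_c = \CT_0^{\sqrt c} Q$, together with simple-eigenvalue perturbation theory applied to $\lambda_c$ and $V_c^\pm$). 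Since $y \mapsto \tau_y \phi$ is $C^\infty$ from $\R$ into $H^s$ for every $s$ whenever $\phi$ is smooth and rapidly decaying, joint smoothness of $(c, y) \mapsto \Pi_{c,y}^\alpha \in \CL(H^1)$, and hence of $\Pi_{c,y}^e = I - \Pi_{c,y}^T - \Pi_{c,y}^+ - \Pi_{c,y}^-$, follows.

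For the operator regularity in (6), we separate constant-coefficient principal parts: $L_{c,y} + \p_{xx} = c - k Q_c^{k-1}(\cdot + y)$ is a smooth multiplier bounded on $H^1$, and $JL_{c,y} + \p_{xxx} = \p_x\bigl(c - k Q_c^{k-1}(\cdot + y)\bigr)$ is bounded $H^1 \to L^2$. Sandwiching by the smooth projections above, $L_{c,y}^e + \p_{xx}$ differs from this multiplier by finite-rank corrections along the smooth profiles and therefore lies in $\CL(H^1)$; analogously $A_e(y) + \p_{xxx} \in \CL(H^1, L^2)$; and $A_{Te}(y)$ has one-dimensional range in $X_{c,y}^T = \mathrm{span}\{\p_x Q_c(\cdot + y)\}$, making $H^1 \to L^2$ boundedness automatic. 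Joint smoothness in $(c, y)$ is inherited from that of the projections and profiles. The main technical point to watch throughout is that the derivative-losing terms cancel in each block entry so that only smooth multipliers and finite-rank corrections remain; once this bookkeeping is carried out, no further obstacles arise.
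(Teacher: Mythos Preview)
Your proof is correct and follows essentially the same approach as the paper: both argue that items (1)--(3), (5), and the translation invariance in (6) are immediate from the conjugation $JL_{c,y} = \tau_y (JL_c)\tau_{-y}$, and then establish the smoothness in (4) and (6) by writing out the projections explicitly as finite-rank operators built from the smooth, exponentially decaying profiles $\p_x Q_c$, $V_c^\pm$, $L_c V_c^\pm$. The only minor difference is that the paper obtains the $c$-dependence of $\lambda_c$ and $V_c^\pm$ directly from the explicit scaling \eqref{scaling} rather than invoking simple-eigenvalue perturbation theory, but your route is equally valid.
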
 
\begin{proof}
All the statements in the lemma, except the smoothness of the operators in $c$ and $y$,  
follow from the translation invariance \eqref{translation} of $JL_{c,y}$. 
To show the smooth of $\Pi_{c, y}^\alpha$ in $c$ and $y$, we use their explicit forms. Any $V\in H^{1}$ can be written as 
\begin{equation}\label{decompv-1}
V=a^{+}V_{c}^{+}(\cdot+y) + a^{-}V_{c}^{-}(\cdot+y) + a^{T}\partial_{x}Q_{c}(\cdot+y)
+ V^{e}, 
\end{equation}
where $V^{e}(\cdot-y)\in X_{c}^{e}$. One can calculate that 
\begin{equation} \label{E:apm}
a^{+}=\langle L_{c,y}V_{c}^{-}(\cdot+y), V\rangle, \quad a^{-}=\langle L_{c,y}V_{c}^{+}(\cdot+y), V\rangle,
\end{equation}
and
\begin{equation} \label{E:aT}
a^{T}= \Vert \partial_{x}Q_{c}\Vert_{L^2}^{-2}
\left(\langle \partial_{x}Q_{c}(\cdot+y), V\rangle- a^{+}\langle \partial_{x}Q_{c}, V_{c}^{+}\rangle-a^{-}\langle \partial_{x}Q_{c}, V_{c}^{-}\rangle\right).
\end{equation}
Therefore 
\begin{equation} \label{E:Pi}
\Pi_{c,y}^T V = a^T \p_x Q_c (\cdot +y), \quad \Pi_{c, y}^\pm V = a^\pm V_c^\pm (\cdot +y)
\end{equation} 
and $\Pi_{c, y}^e = I- \Pi_{c, y}^T - \Pi_{c, y}^+- \Pi_{c, y}^-$. The above explicitly forms yield the smoothness of $\Pi_{c,y}^{+,-,T, e}$ in $c, y$. Finally the smoothness of $L_{c, y}^e + \p_{xx}, A_{Te}(y), A_e(y) +\p_{xxx}$  follow from similar calculations based on the regularity of $Q_c$ and the eigenfunctions $V_c^\pm$. 
\end{proof}

\subsection{A local bundle coordinate system}
In this section, we set up the bundle coordinates near $\mathcal{M}$ precisely and discuss its smoothness.  This subsection is in the same spirit as Section 2.2 in \cite{JLZ}. 

Fixing $c>0$, define a vector bundle $\CX_c^e$ over $ \R$ with fibers $X_{c,y}^{e}$ as
\begin{equation} \label{E:CX}
\CX_c^e = \{(y, V^e) \mid  \; y\in \R, \; V^e \in X_{c, y}^e \},  
\end{equation} 
and balls on this bundle
\begin{equation} \label{E:CX-delta}
\CX_c^e (\delta) = \{ (y, V) \in \CX_c^e  \mid \Vert V \Vert_{H_1} < \delta
\}. 
\end{equation}

Let $ y_\ast \in \R$ and $0<\delta \ll1$, the map
\[
\begin{array}{ccc}
(-\delta,\delta) \times X_{c, y_\ast}^e & \longrightarrow & \CX_c^e\\
(y,V) & \longrightarrow & (y, \Pi_{c, y}^e V)
\end{array}
\]
gives a smooth local trivialization of $\CX_c^e$, where the smoothness is due to the smoothness of $\Pi_{c, y}^e$ with respect to $c$ and $y$. Thus it provides $\CX_c^e$ with a local coordinate system. 

With other subspaces like $X_{c, y}^{T, +, -}$, we will often consider bundles $\R^n\oplus \CX_c^e$ over $\R$ with fibers $\R^n \oplus X_{c, y}^e$, as well as their balls 
\begin{equation} \label{E:Bball} 
B^n (\delta_1) \oplus \CX_c^e (\delta_2) = \{ (y, a, V^e) \mid  a \in \R^n, \ |a| < \delta_1, \  (y, V^e) \in \CX_c^e(\delta_2) \}. 
\end{equation} 

Define an embedding 
\[
Em: \R^{3}\oplus \CX_c^e \to H^{1}
\]
as 
\begin{equation} \label{E:em} \begin{split}
& Em (y, a^T, a^+, a^-, V^e) \\
=&a^{T}\partial_{x}Q_{c}(\cdot+y) + a^{+}V^{+}_{c}(\cdot+y) +a^{-}V^{-}_{c}(\cdot+y)+ V^e \\
= & \left(a^{T}\partial_{x}Q_{c} + a^{+}V^{+}_{c} +a^{-}V^{-}_{c}\right)(\cdot + y) + V^e.
\end{split}\end{equation} 

The embedding $Em^\perp : \R^{2} \oplus \CX_c^e\rightarrow H^1$ defined on the transversal (to the translational direction) bundle will be used in the rest of this paper,
\begin{equation} \label{E:em-perp}
Em^\perp (y, a^+, a^-, V^e) = Em (y,0,a^+, a^-, V^e). 
\end{equation}

Clearly $Em^\perp$ is translation invariant in the sense, for any $\tilde y \in \R$, 
\begin{equation} \label{E:trans-inv-Em} \begin{split}
Em^\perp \big(y+ \tilde y,  a^+, a^-, V^e(\cdot +\tilde y)&\big) 
 = Em^\perp (y, a^+, a^-, V^e)(\cdot +\tilde y).
 \end{split}\end{equation}

On the one hand, according to the above trivialization, given any Banach space $Z$, a mapping $f:Z\to \CX_c^e$ is said to be smooth near some $z_0 \in Z$ if $y(z)$ and $V^e(z) \in X_{c(z_0), y(z_0)}^e$ are smooth in $z$ near $z_0$, where $f(z) = \big(y(z), \Pi^e_{c(z), y(z)} V^e(z)\big)$. Due to the smoothness of $\Pi_{c, y}^e$, in fact this is equivalent to the smoothness of $y(z)$ and $V(z) \in H^1$ where $f(z) = \big( y(z), V(z)\big)$.

On the other hand, for any Banach space $Y$, a mapping $g:\CX_c^e \to Y$ is said to be smooth near some $(y_\ast, V_\ast)$ if 
\[
\tilde g(y, V)= g(y, \Pi_{c, y}^e V), \quad y\in \R, \; V \in X_{c_\ast, y_\ast}^e
\]
is smooth in $(y, V) \in \R \times X_{c, y_\ast}^e$ near $(y_\ast, V_\ast)$. It is straight forward to verify 
\begin{itemize} 
\item $g$ is smooth if and only if locally $g(y, \Pi_{c, y}^e V)$, $y\in \R$, $V \in H^{1}$, is smooth on $\R \times H^1$.
\item $g$ is smooth if and only if locally it is the restriction to $\CX_c^e$ of a smooth mapping defined on $\R \times H^1$;
\item $g$ is smooth if and only if $g\circ f$ is smooth for any smooth $f: Z \to \CX_c^e$ defined on any Banach space $Z$; 
\item $Em$ is smooth with respect to $(y, V^e)$.
\end{itemize}

Near the 2-dim manifold  $\mathcal{M}$ of solitary waves, we will work through the mapping $\Phi$ defined on $ \R^{2} \oplus \CX_c^e$ 
\begin{equation} \label{E:coord-1} \begin{split}
U =  &\Phi (y, a, V^e) = Q_c( \cdot +y) + Em^\perp (y, a, V^e).
\end{split}\end{equation}
For any fixed $c>0$, $\Phi(\cdot)$ is diffeomorphic when $a \in \R^2$ and $\Vert V^e \Vert_{H^1}$ are sufficiently small. 

\begin{remark} \label{R:metric}
Since $\Phi$ is a local diffeomorphism with properties uniform in $y$, locally the total $|a|+\Vert V^e\Vert_{H^1}$ of the transversal components is equivalent to the $H^1$ distance from $\Phi (y, a, V^e)$ to $\CM_c$. 
\end{remark}

This is a smooth vector bundle coordinate system in a neighborhood of $\mathcal{M} \subset H^1$. 
From \eqref{E:em} and \eqref{E:em-perp}, $\Phi$ can be naturally extended into a smooth mapping on $\R^{3} \oplus H^1$. 

\begin{remark} \label{R:coord} 
It is tempting to use the coordinate system 
\[
U = \left(\CT_0^{\sqrt{c}}(Q + a^{+}V_{1}^{+}+ a^{-}V_{1}^{-}+V^{e})\right) (\cdot +y)\
\]
where 
$V^{e}\in X_1^e$ and $y \in \R$. 
However, such rescaling and translation parametrization is not  smooth in $H^1$ because the differentiation in $c$ and $y$ causes a loss of one order regularity in $D_y (\CT^{\sqrt{c}}V^{e})(\cdot +y)$ and $D_c (\CT^{\sqrt{c}}V^{e})(\cdot +y)$. This is one of the main issues in Nakanishi and Schlag \cite{NS1}, where the authors constructed the center-stable manifolds of the manifold of ground states for the Klein-Gordon equation. They introduced a  nonlinear ``mobile distance' to overcome that difficulty. Instead, the above bundle coordinate system \eqref{E:coord-1}, where $V^e \in X_{c, y}^e$ is not directly parametrized by a translation in $y$ and a rescaling in $c$, represents a somewhat different framework based on the observation that, while the parametrization by the spatial translation of $y$ and rescaling of $c$ are not smooth in $H^1$ with respect to $y$ and $c$ respectively,  the vector bundles $X_{c, y}^{T,e, +, -}$ over $\mathcal{M}$ are smooth in $c$ and $y$ as given in Lemma \ref{decompvb}. This geometric bundle coordinate system has been used in \cite{BLZ08, JLZ}, in the latter of which we construct local invariant manifolds near unstable traveling waves of the Gross-Pitaevskii equation. 
 \end{remark}

\subsection{An equivalent form of the gKDV equation near $\mathcal{M}$}\label{derivationofeqns}
Fix $c>0$. Let $U(t,x)$ be any solution to \eqref{eqintf}. If $U(t,x)$ stays in a small neighborhood of $\{ Q_c(\cdot +y) \mid y \in \R\}$, we can use the coordinate system \eqref{E:coord-1} to write it as 
\begin{equation}\label{decomp}
U(t)= \Phi(y(t),a^+(t),a^-(t), V^{e}(t)),
\end{equation}
where $(y(t),a^+(t),a^-(t), V^{e}(t))\in B^2(\delta)\oplus \CX_c^e(\delta)$. 

Plugging \eqref{decomp}, \eqref{E:coord-1} into \eqref{eqintf}, we obtain
\begin{equation}\label{eqdecomp}
\begin{split}
&\partial_{x}Q_{c}(\cdot+y)\partial_{t}y+(\partial_{t}a^{\pm})V^{\pm}_{c}(\cdot+y)+a^{\pm}\partial_{x}V^{\pm}_{c}(\cdot+y)\partial_{t}y+\partial_{t}V^{e}\\
=&(\pm \lambda_{c})a^{\pm} V^{\pm}_{c} (\cdot +y) +JL_{c,y}V^{e}+G(y,a^{+}, a^{-}, V^{e}),
\end{split}
\end{equation}
where
\begin{equation} \label{E:G-1}
\begin{split}
G(y,a^{+}, a^{-},& V^{e})=-\partial_{x}\Big[\left(Q_{c}(\cdot+y)+Em^\perp(c,y,a^+,a^-,V^{e})\right)^{k}\\
&-Q_{c}^{k}(\cdot+y)-kQ_{c}^{k-1}(\cdot+y)Em^\perp(c,y,a^+,a^-,V^{e})\Big] \\
& \quad \; := \p_x \big(G_1(c, y,a^{+}, a^{-}, V^{e})\big).
\end{split}
\end{equation}
Throughout the paper, we often omit the dependence of $G$ and other quantities on $c$ which is mostly fixed. 
As a convention of notations, $a^\pm V_c^\pm$ always means summation of the terms corresponding to `+' and `-' signs. 

We shall apply projections $\Pi_{c, y}^{T, \pm, e}$, by using \eqref{E:apm} and \eqref{E:aT}, to \eqref{eqdecomp} to obtain equations of each components $y, a^\pm, V^e$. 
Firstly applying  $L_{c,y}V^{-}_{c}(\cdot+y)$ to \eqref{eqdecomp}, we obtain
\begin{equation}\label{ap1}
\partial_{t}a^{+}= \lambda_{c} a^{+}+ A^{+}(c,y,a^{+}, a^{-}, V^{e})\partial_{t}y+\bar{G}^{+}(y,a^{+}, a^{-}, V^{e}),
\end{equation}
where
\[
\begin{split}
A^{+}(y,a^{+}, a^{-}, V^{e})=
&-a^{\pm}\langle L_{c,y}V^{-}_{c},\partial_{x}V^{\pm}_{c}\rangle\\&+
\langle (\p_{y}L_{c,y})V^{-}_{c}(\cdot+y)+ L_{c,y}\partial_{x}V^{-}_{c}(\cdot+y), V^{e} \rangle, 
\end{split}
\]
and 
\[
\bar{G}^{+}(y,a^{+}, a^{-}, V^{e})= 
-\langle \p_x L_{c,y}V^{-}_{c}(\cdot +y), G_1(c,y,a^{+}, a^{-}, V^{e}) \rangle.
\]
Similarly, applying  $L_{c,y}V^{+}_{c}(\cdot+y)$ to \eqref{eqdecomp}, we obtain
\begin{equation}\label{am1}
\partial_{t}a^{-}= - \lambda_{c}a^{-}+ A^{-}(c,y,a^{+}, a^{-}, V^{e})\partial_{t}y+\bar{G}^{-}(y,a^{+}, a^{-}, V^{e}),
\end{equation}
where 
\[
\begin{split}
A^{-}(y,a^{+}, a^{-}, V^{e})=
&-a^{\pm}\langle L_{c,y}V^{+}_{c},\partial_{x}V^{\pm}_{c}\rangle\\
&+\langle (\p_{y}L_{c,y})V^{+}_{c}(\cdot+y)+ L_{c,y}\partial_{x}V^{+}_{c}(\cdot+y), V^{e}\rangle,
\end{split}
\]and 
\[
\bar{G}^{-}(y,a^{+}, a^{-}, V^{e})= 
- \langle \p_x L_{c,y}V^{+}_{c}(\cdot +y), G_1(c,y,a^{+}, a^{-}, V^{e})\rangle.
\]

Taking the $L^2$ inner product of \eqref{eqdecomp} with $\partial_{x}Q_{c}(\cdot+y)$, then plugging in \eqref{ap1} and \eqref{am1}, we obtain
\[
\begin{split}
A^{T} (&y,a^{+}, a^{-}, V^{e})\partial_{t}y=- \langle L_{c,y}J \partial_{x}Q_{c}(\cdot+y), V^{e}\rangle\\
& +\langle \partial_{x}Q_{c}(\cdot+y), G(y,a^{+}, a^{-}, V^{e})\rangle -\langle \partial_{x}Q_{c}, V^{\pm}_{c}\rangle \bar{G}^{\pm}(y,a^{+}, a^{-}, V^{e}).
\end{split}
\]
where
\[
\begin{split}
A^{T}(y,a^{+}, & a^{-}, V^{e})=\Vert \partial_{x}Q_{c}\Vert_{L^2}^2+a^{\pm}\langle \partial_{x}Q_{c},\partial_{x}V^{\pm}_{c}\rangle\\
&-\langle V^{e}, \partial^{2}_{x}Q_{c}(\cdot+y)\rangle+ \langle \partial_{x}Q_{c}, V^{\pm}_{c}\rangle A^{\pm}(y,a^{+}, a^{-}, V^{e}).
\end{split}
\]
It is clear that $A^{T}(y,a^{+}, a^{-}, V^{e})> 0$ when $|a^{\pm}|, \Vert V^{e}\Vert _{H^1}\ll 1$, therefore
\begin{equation}\label{eqyp}
\begin{split}
\partial_{t}y= & (A^T)^{-1} 
\big[- \langle L_{c,y} J \partial_{x}Q_{c}(\cdot+y), V^{e}\rangle+\langle \partial_{x}Q_{c}(\cdot+y), G
\rangle\\
&-\langle \partial_{x}Q_{c}, V^{\pm}_{c}\rangle \bar{G}^{\pm}, 
\big]\\
:=& \bar{G}^{T}(y,a^{+}, a^{-}, V^{e})\\
:= &- \Vert \partial_{x}Q_{c} \Vert_{L^2}^{-2}
\langle L_{c,y} J \partial_{x}Q_{c}(\cdot+y), V^{e}\rangle+ G^{T}(y,a^{+}, a^{-}, V^{e}),
\end{split}
\end{equation}
where in the last line we separated terms which are linear and of higher order in $a^\pm$ and $V^e$. Substituting \eqref{eqyp} into \eqref{ap1} and \eqref{am1}, we obtain
\begin{equation}\label{apm}
\partial_{t}a^{\pm}= \pm \lambda_{c} a^{\pm}+ G^{\pm}(y,a^{+}, a^{-}, V^{e}),
\end{equation}
where
\[
G^{\pm}(y,a^{+}, a^{-}, V^{e})=(A^{\pm}\bar{G}^{T}+\bar{G}^{\pm})(y,a^{+}, a^{-}, V^{e}).
\]

Using the higher order regularity of $V_{c}^{\pm}$, $\partial_{x}Q_{c}$ and $\partial_{c}Q_{c}$, one can check that $G^{+,-,T}$ are well-defined and smooth in the energy space and at least quadratic in $a^{\pm}$ and $V^{e}$. 

Applying $\Pi_{c,y}^{e}$ to \eqref{eqdecomp}, we have
\begin{equation}\label{eqve}
\Pi_{c,y}^{e}\partial_{t}V^{e}=\Pi_{c,y}^{e}JL_{c,y}V^{e}+G^{e}(y,a^{+}, a^{-}, V^{e}), 
\end{equation}
where
\[
\begin{split}
G^{e}
=&\Pi_{c,y}^{e}G
-a^{\pm}\bar{G}^{T}
\Pi_{c,y}^{e}\left(\partial_{x}V_{c}^{\pm}(\cdot+y)\right), \quad (I-\Pi_{c, y}^e) G^e=0. 
\end{split}
\]
{\bf An equivalent form of the $V^e$ equation.}  To avoid estimating the geometric equation \eqref{eqve} involving bundles, we first transform it to an equivalent form which may be posed in the whole space $H^1$. Let
\begin{equation}
 \Pi^{\perp}_{c,y}= I-\Pi^{e}_{c,y} \hspace{0.1in}\text{and}\hspace{0.1in}  X^{\perp}_{c,y}= \Pi^{\perp}_{c,y} H^{1}.
\end{equation}
Since  $\Pi^{\perp}_{c,y(t)}V^e(t)=0$ for all $t$, 
differentiating this identity with respect to $t$ yields 
$$\Pi^{\perp}_{c,y}\partial_{t}V^e=\p_t y \p_{y}\Pi_{c,y}^{e} V^e. $$ 
The term $\p_{y}\Pi_{c,y}^{e}V^e$ actually serves as the second fundamental form of the bundle $\CX_c^e$. In order to make the $V^e$ equation posed in the whole space $H^{1}$,  we define a bounded linear map $\mathcal{F}(c,y)\in \CL( H^{1})$

\begin{equation} \label{E:CF}
\mathcal{F}(c, y)V =\p_{y}\Pi_{c,y}^e(\Pi^{e}_{c,y}V-\Pi^{\perp}_{c,y}V) = -\p_y \Pi_{c,y}^\perp (I -2\Pi^{\perp}_{c,y}) V.
\end{equation}
The above form of $\mathcal{F}$, which is a modification of the second fundamental form of $\CX_c^e$, would bring us certain convenience to carry out some calculations in later sections. 

Accordingly, we consider the following extension of \eqref{eqve} 
\begin{equation}\label{eqve2}
\partial_{t}V= \Pi^{e}_{c,y}JL_{c,y}\Pi_{c,y}^{e}V+\p_t y\mathcal{F}(c,y)V +G^e. 
\end{equation}

In the below, we demonstrate that, 
if $V(s)\in X_{c,y(s)}^e$ for some $s$, then $V(t)\in X_{c,y(t)}^e$ for any $t$, and consequently \eqref{eqve} and \eqref{eqve2} are identical according to the definition of $\mathcal{F}(c,y)$. In fact, let $V(t)$ be the solution to
\begin{equation}\label{lv}
 \partial_{t}V = \Pi^{e}_{c,y}JL_{c,y}\Pi_{c,y}^{e}V+ \p_t y \mathcal{F}(c,y)V + f^e(t), \quad f^e(t) \in X_{c, y}^e.
\end{equation}
Since $\Pi_{c,y}^{e}\Pi_{c,y}^{e}=\Pi_{c,y}^{e}$, differentiating this identity in $y$ we have 
\begin{equation} \label{E:PiPi}
\p_{y}\Pi_{c,y}^{e} \Pi_{c,y}^{e}+ \Pi_{c,y}^{e}\p_{y}\Pi_{c,y}^{e}  =\p_{y}\Pi_{c,y}^{e}.
\end{equation}
Using this identity, we calculate
\begin{equation}\label{eqvperp}
\begin{split}
 \partial_{t}(\Pi_{c,y}^{\perp}V)&= \partial_{t}y \Pi_{c,y}^{\perp}\p_{y}\Pi_{c,y}^{e}(\Pi^{e}_{c,y}V-\Pi_{c,y}^{\perp}V) +\partial_{t}y \p_{y}\Pi_{c,y}^{\perp}V\\
  &=-\partial_{t}y\p_{y}\Pi_{c,y}^{\perp} (\Pi^{\perp}_{c,y}V).
 \end{split}
\end{equation}
Observe that the above equation of $\Pi_{c,y}^{\perp}V$ is a well-posed homogeneous linear equation in a finite dimensional space, therefore if $V\left(s\right)\in X^{e}_{c,y\left(s\right)}$, i.e. $\Pi_{c,y\left(s\right)}^{\perp}V\left(s\right)=0$, then $\Pi_{c,y(t)}^{\perp}V(t)=0$ for all $t$.

We will work with \eqref{eqve2} since it is more convenient to obtain estimates compared to \eqref{eqve}. In summary,  in a small neighborhood of solitary waves, we will 
write the gKDV equation in the bundle coordinates \eqref{decomp} as a system consisting of \eqref{eqyp}, \eqref{apm} and \eqref{eqve2}.

\section{Linear Analysis} \label{LinearAnalysis}
The aim of this section is to establish linear estimates to be utilized on equation \eqref{eqve2}. The unknown of \eqref{eqve2} is in $\CX^e$, however,  with our definition of $\CF$ it is also well-posed in $H^1$. As one will see later, the following more general form of \eqref{eqve2} with the unknown $V\in H^1$ (not necessarily in $\CX^e$) will be more convenient for us to use
\begin{equation}\label{eqve3}
\partial_{t}V= \Pi^{e}_{c,y}JL_{c,y}\Pi_{c,y}^eV+\p_t y \mathcal{F}(c,y)V +f(t, x), 
\end{equation}
where $y = y(t)$ is a given Lipschitz function. 

With our definition of $\mathcal{F}$, the equations of 
$V^{e}=\Pi_{c,y}^{e}V$ and $V^{\perp}= \Pi_{c,y}^{\perp}V$ are decoupled. In fact, similar to \eqref{eqvperp}, one can calculate 
\begin{equation}\label{vevp1}
\partial_{t}V^{e}= \Pi^{e}_{c,y}JL_{c,y}V^{e}+\partial_{t}y\mathcal{F}(c,y)V^{e}+f^{e}
\end{equation}
\begin{equation}\label{vevp2}
\partial_{t}V^{\perp}=\partial_{t}y\mathcal{F}(c,y)V^{\perp}+f^{\perp}.
\end{equation}
where  $f^{e}=\Pi_{c,y}^{e}f $ and  $f^{\perp}= \Pi_{c,y}^{\perp}f.$ We will work with \eqref{vevp1}, \eqref{vevp2}, and estimate $V^{e}$ and $V^{\perp}$ separately. In particular, we note that \eqref{E:CF} and \eqref{E:PiPi} imply
\begin{equation} \label{E:CF-1}
\CF(c, y) X_{c, y}^e \subset X_{c, y}^\perp, \quad \CF(c, y) X_{c, y}^\perp \subset X_{c, y}^e.
\end{equation} 

\noindent
{\bf  Energy Estimates of homogeneous Equations.} 
Starting with energy estimates, we analyze \eqref{eqve3} with $f=0$. Fix $c>0$. According to Lemma \ref{decompvb}, there exists $A_c>0$ such that $\langle L_{c,y}V^{e}, V^{e}\rangle \ge A_c \Vert V^{e}\Vert _{H^{1}}$ for any $V^{e}\in X_{c,y}^{e}$, 
therefore $\langle L_{c,y}V^{e}, V^{e}\rangle^{1/2}$ is equivalent to the $H^{1}$ norm on $X_{c, y}^e$.  For any $V\in H^1$, define a semi-norm
\begin{equation}\label{tildeh1}
\Vert V \Vert _{\widetilde{H}_y^{1}}:= \langle L_{c,y} \Pi_{c, y}^e V, \Pi_{c, y}^e V \rangle^{1/2}\sim \Vert \Pi_{c, y}^e V\Vert _{H^{1}}.
\end{equation}
which depends on $c$ and $y$.

\begin{lemma}\label{sh1}
Assume that $f=0$ and $y(t)$ satisfies $\Vert \partial_{t}y\Vert_{L^{\infty}}\le \sigma$, then \eqref{eqve3} generates a bounded evolution operator $$S(t,s)\in \mathcal{L}(H^1, H^1), \qquad \forall s,t\in [t_{0},t_{0}+T]$$ satisfying 
$$S(t,s)\in \mathcal{L}(X_{c,y(s)}^{e}) 
\quad \text{and}\quad S(t,s)\in \mathcal{L}(X_{c,y(s)}^{\perp}). 
$$ 
Moreover, there exists a constant $C$ independent of $y$, $\sigma$, such that for any $V^{e}(s)\in X_{c,y(s)}^{e}$ and $V^{\perp}(s)\in X_{c,y(s)}^{\perp}$, we have 
$$\Vert S(t,s)V^{e}(s)\Vert _{\widetilde{H}_{y(t)}^{1}}\le e^{C\sigma |t-s|}\Vert V^{e}(s)\Vert _{\widetilde{H}_{y(t_0)}^{1}},$$
and
$$\Vert S(t,s)V^{\perp}(s)\Vert _{H^{1}}\le e^{C\sigma |t-s|}\Vert V^{\perp}(s)\Vert _{H^{1}}.$$
\end{lemma}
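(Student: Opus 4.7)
The plan is to exploit the decoupling of equations \eqref{vevp1} and \eqref{vevp2} for $V^e = \Pi^e_{c,y}V$ and $V^\perp = \Pi^\perp_{c,y}V$, and treat each component separately. The existence of $S(t,s)$ on $H^{1}$ is obtained by standard techniques: the principal part of $\Pi^e_{c,y}JL_{c,y}\Pi^e_{c,y}$ is $-\p_{xxx}$ (a unitary generator on $L^2$, bounded group on $H^s$), while the remaining first-order terms and $\p_t y\,\mathcal{F}(c,y)$ have smooth (in $y$) coefficients; one regularizes $y(t)$ in time, solves the smooth problem via semigroup perturbation, and passes to the limit using the a priori estimates below. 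Invariance of the moving fibers $X^e_{c,y(t)}$ and $X^\perp_{c,y(t)}$ is verified by direct differentiation: computing $\frac{d}{dt}\bigl[\Pi^e_{c,y(t)}V^\perp(t)\bigr]$ from \eqref{vevp2}, identity \eqref{E:PiPi} together with the formula $\mathcal{F}(c,y)V^\perp = \p_y\Pi^\perp_{c,y}V^\perp$ (valid on $X^\perp_{c,y}$) produces an exact cancellation forcing $\Pi^e_{c,y(t)}V^\perp(t)\equiv 0$; the analogous argument handles $V^e$.

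The estimate on $V^\perp$ is essentially finite-dimensional. By Lemma \ref{decompvb} the projections $\Pi^{e,\perp}_{c,y}$ depend smoothly on $y$ with $y$-derivatives uniformly bounded on $H^{1}$ (by translation invariance), hence $\mathcal{F}(c,y)\in\mathcal{L}(H^{1})$ has operator norm $\le C$. Equation \eqref{vevp2} with $f^\perp=0$ then gives $\Vert V^\perp(t)\Vert_{H^{1}}\le e^{C\sigma|t-s|}\Vert V^\perp(s)\Vert_{H^{1}}$ by Gronwall.

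For $V^e$, I will work with the energy $\Vert V^e\Vert_{\widetilde{H}^1_y}^2 = \langle L_{c,y}V^e, V^e\rangle$, which is equivalent to $\Vert V^e\Vert_{H^{1}}^2$ uniformly in $y$ by the coercivity in Lemma \ref{decompvb}. Differentiating in $t$ along \eqref{vevp1} with $f^e=0$ splits into three pieces: $\p_t y\,\langle(\p_y L_{c,y})V^e,V^e\rangle$, the flow term $2\langle L_{c,y}\Pi^e_{c,y}JL_{c,y}V^e,V^e\rangle$, and $2\p_t y\,\langle L_{c,y}\mathcal{F}(c,y)V^e,V^e\rangle$. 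The Hamiltonian cancellation is the heart of the estimate: using $L_{c,y}\p_x Q_c(\cdot+y)=0$ together with the block form \eqref{JL-block} (which yields $\Pi^\pm_{c,y}JL_{c,y}V^e=0$), the flow term reduces to $2\langle L_{c,y}JL_{c,y}V^e,V^e\rangle = 2\langle J(L_{c,y}V^e),L_{c,y}V^e\rangle = 0$ by antisymmetry of $J=\p_x$ on $L^2$. The remaining two pieces are bounded by $C\sigma\Vert V^e\Vert_{\widetilde{H}^1_y}^2$, since $\p_y L_{c,y} = -k(k-1)Q_c^{k-2}(\cdot+y)\p_x Q_c(\cdot+y)$ is a bounded multiplier on $H^{1}$ and $L_{c,y}\mathcal{F}(c,y)V^e$ lies in the finite-dimensional $X^+_{c,y}\oplus X^-_{c,y}$. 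A final application of Gronwall yields the stated exponential bound.

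The principal obstacle is the rigorous construction of $S(t,s)$ itself: the equation is third-order in $x$ with time-dependent coefficients arising through a merely Lipschitz $y(t)$, and the projector factors make the structure slightly delicate. This is handled by an approximation argument (regularizing $y$ in time and then passing to the limit using the uniform energy estimates described above). Beyond this, the proof is a careful but essentially standard energy argument driven by the Hamiltonian cancellation at leading order.
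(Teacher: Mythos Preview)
Your proposal is correct and follows essentially the same energy argument as the paper: decouple via \eqref{vevp1}--\eqref{vevp2}, use the Hamiltonian cancellation $\langle L_{c,y}V^e, JL_{c,y}V^e\rangle=0$ for the $V^e$ estimate, and Gronwall. One minor sharpening in the paper's version: the term $2\p_t y\,\langle L_{c,y}V^e,\mathcal{F}(c,y)V^e\rangle$ in fact vanishes identically, because $\mathcal{F}(c,y)V^e\in X^\perp_{c,y}$ by \eqref{E:CF-1} and the splitting $X^e_{c,y}\oplus X^\perp_{c,y}$ is $L_{c,y}$-orthogonal, so only the $\p_y L_{c,y}$ contribution survives on the right-hand side.
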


\begin{proof}
Due to the high regularity of $Q_{c}$ and $X_{c,y}^{\perp}$, $\Pi^{e}_{c,y}JL_{c,y}\Pi_{c,y}^eV$ is a bounded perturbation to $JL_{c,\infty}=\partial_{x}(c-\partial_{xx})$. Moreover, $\mathcal{F}(c,y)\in \mathcal{L}(H^1)$, therefore \eqref{eqve3} is well-posed in $H^1$ and $S(t,s)\in \mathcal{L}(H^1)$ is well-defined. 

The invariance of $S(t,s)$ in the bundles $\CX^{e}$ and $(c,y, X_{c,y}^{\perp})$ is an immediate consequence of the decoupled form  of the equations \eqref{vevp1} and \eqref{vevp2} of $V^{e}$ and $V^{\perp}$. 

It remains to prove the two inequalities. We have 
\begin{equation}
\langle L_{c,y}V^{e}, V^{e}\rangle_{t}= 2\langle L_{c,y}V^{e}, \partial_{t}V^{e}\rangle+ \partial_{t}y\langle (\p_{y}L_{c,y}) V^{e}, V^{e}\rangle.
\end{equation}
One the one hand, clearly from  \eqref{vevp1}, \eqref{E:CF-1}, and the fact that $H^1 = X_{c, y}^e \oplus X_{c, y}^\perp$ is a $L_{c, y}$-orthogonal decomposition, we have 
$$\langle L_{c,y}V^{e}, \partial_{t}V^{e}\rangle=\langle L_{c,y}V^{e}, \Pi_{c, y}^e JL_{c,y}V^{e}
\rangle=\langle L_{c,y}V^{e}, JL_{c,y}V^{e}\rangle=0.$$ 
On the other hand, using the high regularity of $Q_{c}$, it is easy to check that there exists constants $C'$ and $C$ such that 
$$|\partial_{t} y\langle (\p_{y}L_{c,y})V^{e}, V^{e}\rangle|\le C'\sigma \Vert V^{e}\Vert ^{2}_{H^{1}}\le C \sigma \langle L_{c,y}V^{e}, V^{e}\rangle.$$
It follows that 
$$\langle L_{c,y}V^{e}, V^{e}\rangle_{t}\le C\sigma \langle L_{c,y}V^{e}, V^{e}\rangle,$$
which implies the first inequality. 

Taking the $H^1$ inner product of \eqref{vevp2} with $V^{\perp}$, one immediately obtains the second inequality . 
\end{proof}

\begin{remark}
It is worth mentioning that in the above lemma the coefficient in front of $e^{\sigma t}$ is 1, which is crucial in future iteration steps. 
\end{remark}

\noindent
{\bf Smoothing Space-Time Estimates of Homogeneous Equations} In the rest of the section, we establish smoothing space-time  estimates for \eqref{vevp1} based on the space-time estimates established in \cite{KPV} for the Airy equation $u_t+u_{xxx}=0$.   

\begin{lemma} (\cite{KPV})
Let $W(t)$ be the group generated by
\begin{equation} 
u_{t}+u_{xxx}=0.
\end{equation}
The following estimates hold:
\begin{enumerate}
\item If $u_{0}\in L^{2}(\R)$, then 
\begin{equation}\label{hl1}
\Vert \partial_{x}W(t)u_{0}\Vert _{L^{\infty}_{x}L^{2}_{t}}\le C \Vert u_{0}\Vert _{L^{2}},
\end{equation}
and 
\begin{equation}\label{hl2}
\Vert D_{x}^{1/4}W(t)u_{0}\Vert _{L^{4}_{t}L^{\infty}_{x}}\le C \Vert u_{0}\Vert _{L^{2}}.
\end{equation}
\item If $u_{0}\in H^{s}(\R)$ with $s>3/4$, then for any $\rho>3/4$,
\begin{equation}\label{hl3}
\Vert W(t)u_{0}\Vert _{L^{2}_{x}L^{\infty}_{[0,T]}}\le C (1+T)^{\rho}\Vert u_{0}\Vert _{H^{s}}.
\end{equation}
\item If $g(t,x)\in L^{1}_{x}L^{2}_{t}$, then for any $T>0$ (can be $\infty$),
\begin{equation}\label{il1}
\left\Vert \partial_{x}\int_{0}^{t}W(t-s)g(
s)ds\right\Vert _{L^{\infty}_{[0,T]}L^{2}_{x}}\le C \Vert g\Vert _{L^{1}_{x}L^{2}_{[0,T]}},
\end{equation}
and
\begin{equation}\label{il2}
\left\Vert \partial_{xx}\int_{0}^{t}W(t-s)g(s)ds\right\Vert _{L^{\infty}_{x}L^{2}_{[0,T]}}\le C \Vert g\Vert _{L^{1}_{x}L^{2}_{[0, T]}}.
\end{equation}
\end{enumerate}
\end{lemma}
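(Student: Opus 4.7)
All four inequalities are the classical smoothing, Strichartz, and maximal function estimates of Kenig--Ponce--Vega for the Airy group, and the cleanest route in the paper is simply to cite \cite{KPV}. Were I to reprove them from scratch, I would reduce each to an oscillatory integral estimate for the Fourier multiplier $e^{it\xi^{3}}$, whose phase $\varphi(t,x;\xi)=x\xi+t\xi^{3}$ is nondegenerate away from $\xi=0$ with $\p_{\xi}^{2}\varphi=6t\xi$.

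I would dispose of \eqref{hl1} first, since it is essentially an exact identity. Writing
\[
\partial_{x}W(t)u_{0}(x)=\int i\xi\, e^{i(x\xi+t\xi^{3})}\widehat{u}_{0}(\xi)\,d\xi,
\]
splitting the integral into $\xi>0$ and $\xi<0$, and then changing variables $\eta=\xi^{3}$ on each half (so $d\eta=3\xi^{2}d\xi$) recasts the $\xi$-integral as an inverse Fourier transform in $t$ of the symbol $\tfrac{i}{3\xi(\eta)}e^{ix\xi(\eta)}\widehat{u}_{0}(\xi(\eta))$. Plancherel in $t$ then yields the pointwise-in-$x$ identity $\Vert\partial_{x}W(t)u_{0}(x,\cdot)\Vert_{L^{2}_{t}}^{2}=\tfrac{2\pi}{3}\Vert u_{0}\Vert_{L^{2}}^{2}$, which is \eqref{hl1}. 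For \eqref{hl2} I would run a $TT^{*}$ argument: the composed kernel $K(t-s,x-y)=\int e^{i((x-y)\xi+(t-s)\xi^{3})}|\xi|^{1/2}\,d\xi$ obeys $|K(\tau,z)|\le C|\tau|^{-1/2}$ by stationary phase, and one-dimensional Hardy--Littlewood--Sobolev in $t$ converts this into the $L^{4}_{t}L^{\infty}_{x}$ bound.

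Estimate \eqref{hl3} is the deepest of the four. Here I would perform a Littlewood--Paley decomposition in $x$ and exploit the fact that, on frequencies $|\xi|\sim N$, the Airy propagation has group velocity $3\xi^{2}\sim N^{2}$, so on $[0,T]$ each dyadic piece $u_{0,N}$ is effectively transported a distance $\sim N^{2}T$ before dispersing. A $TT^{*}$ argument combined with a Carleson--Sj\"olin type estimate as in \cite{KPV} controls each piece in $L^{2}_{x}L^{\infty}_{[0,T]}$ by $C(1+T)^{\rho}N^{s}\Vert u_{0,N}\Vert_{L^{2}}$ for any $\rho,s>3/4$, and summing dyadically yields \eqref{hl3}. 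Finally, \eqref{il1} and \eqref{il2} follow from \eqref{hl1} by duality together with Minkowski's inequality and a Christ--Kiselev type argument to replace $\int_{0}^{\infty}$ by $\int_{0}^{t}$. The main obstacle is \eqref{hl3}: the factor $(1+T)^{\rho}$ with $\rho>3/4$ is sharp and reflects the absence of a time-independent maximal function bound for the Airy group on the line, and tracking this growth carefully through the Littlewood--Paley argument is where genuine work is needed; once \eqref{hl3} is secured, the remaining three inequalities drop out of Plancherel and $TT^{*}$ duality.
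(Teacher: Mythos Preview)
Your proposal is correct and matches the paper's approach exactly: the paper does not reprove this lemma but simply attributes it to \cite{KPV} and uses the estimates as black boxes. Your additional sketches of how each estimate would be proved (Plancherel in $t$ for \eqref{hl1}, $TT^{*}$ plus stationary phase for \eqref{hl2}, Littlewood--Paley for the maximal estimate \eqref{hl3}, and duality/Christ--Kiselev for \eqref{il1}--\eqref{il2}) are all standard and accurate, but go beyond what the paper itself provides.
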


Motivated by the above estimates, define norms $\Vert \cdot\Vert _{ST'_{[t_{0},t_{0}+T]}}$ as
\begin{equation} \begin{split} 
\Vert V\Vert _{ST'_{[t_{0},t_{0}+T]}}=\max\{&\Vert V\Vert _{L^{\infty}_{[t_{0},t_{0}+T]}H_{x}^{1}}, \Vert \partial_{xx}V\Vert _{L^{\infty}_{x}L^{2}_{[t_{0},t_{0}+T]}},\\
&\Vert V\Vert _{L^{2}_{x}L^{\infty}_{[t_{0},t_{0}+T]}},\Vert \partial_{x}V\Vert _{L^{4}_{[t_{0},t_{0}+T]}L^{\infty}_{x}}\},
\end{split} \end{equation}
and 
$\Vert \cdot\Vert _{ST^{c}_{[t_{0},t_{0}+T]}}$ as 
\begin{equation}
\Vert V(t,x)\Vert _{ST^{c}_{[t_{0},t_{0}+T]}}=\Vert V(t,x-ct)\Vert _{ST'_{[t_{0},t_{0}+T]}}.
\end{equation}

\begin{proposition}\label{leve}
There exists \;$C>0$ independent of $y(\cdot)$, $\sigma \le 1$, $t_0$, and $T$, such that  for any $y(\cdot)\in C^{1}([t_{0},t_{0}+T])$ with $\|\partial_{t}y\|_{L^\infty}\le \sigma$ and any $V^{e}(t_{0})\in X^{e}_{c,y(t_{0})}$, we have
\begin{equation} \label{E:leve-1}
\begin{split}
&\Vert S(t,t_{0})V^{e}(t_{0})\Vert _{ST^{c}_{[t_{0},t_{0}+T]}}+\left\Vert\int_{t_{0}}^{t}S(t,s)f^{e}(s)ds\right\Vert _{ST^{c}_{[t_{0},t_{0}+T]}}\\
\le & C (1+T^{4})e^{C\sigma T}\Vert V^{e}(t_{0})\Vert _{H^{1}}+ C\int_{t_{0}}^{t_{0}+T}(1+(t_0+T-s)^4)e^{C\sigma (t_0+T-s)}\Vert f^{e}(s)\Vert _{H_{x}^{1}}ds,
\end{split}
\end{equation}
\begin{equation} \label{E:temp-1.4}
\begin{split}
&\Vert S(t,t_{0})V^{e}(t_{0})\Vert _{\wt{H}_{y(t)}^1}+\left\Vert\int_{t_{0}}^{t}S(t,s)f^{e}(s)ds\right\Vert _{\wt{H}_{y(t)}^1}\\\le &e^{C\sigma (t-t_0)}\Vert V^{e}(t_{0})\Vert _{\wt H_{y(t_0)}^{1}}+C\int_{t_{0}}^{t}e^{C\sigma (t-s)}\Vert f^{e}(s)\Vert _{H_{x}^{1}}ds,
\end{split}
\end{equation}
and 
\begin{equation}
\begin{split}
&\Vert S(t,t_{0})V^{\perp}(t_{0})\Vert _{H^{1}}+ \left\Vert\int_{t_{0}}^{t}S(t,s)f^{\perp}(s)ds\right\Vert _{H_{x}^{1}}\\\le &e^{C\sigma (t-t_0)}\Vert V^{\perp}(t_{0})\Vert _{H^{1}}+\int_{t_{0}}^{t}e^{C\sigma (t-s)}\Vert f^{\perp}(s)\Vert _{H_{x}^{1}}ds.
\end{split}
\end{equation}
\end{proposition}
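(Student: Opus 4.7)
The plan is to treat equation \eqref{vevp1} as the Airy equation in the moving frame, perturbed by operators that are bounded $H^1\to H^1$ (and in fact map into spatially localized functions), and then run KPV's space--time estimates together with Duhamel's formula and Lemma~\ref{sh1}. The three estimates of Proposition~\ref{leve} split naturally: the $\wt H^1_{y(t)}$ estimate for $V^e$ and the $H^1$ estimate for $V^\perp$ are almost immediate consequences of Lemma~\ref{sh1}; the genuine work is the space--time $ST^c$ estimate.

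For \eqref{E:temp-1.4} and the $V^\perp$ inequality, I would apply the variation-of-parameters formula $V(t)=S(t,t_0)V(t_0)+\int_{t_0}^t S(t,s)f(s)\,ds$ (valid because \eqref{vevp1} and \eqref{vevp2} decouple and are well-posed in $H^1$ by Lemma~\ref{sh1}), take the appropriate norm, and quote the operator bounds $\|S(t,s)V^e(s)\|_{\wt H^1_{y(t)}}\le e^{C\sigma|t-s|}\|V^e(s)\|_{\wt H^1_{y(s)}}$ and $\|S(t,s)V^\perp(s)\|_{H^1}\le e^{C\sigma|t-s|}\|V^\perp(s)\|_{H^1}$ from Lemma~\ref{sh1}, using the equivalence $\|f^e(s)\|_{\wt H^1_{y(s)}}\sim\|f^e(s)\|_{H^1}$ inside the time integral.

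For \eqref{E:leve-1}, set $\wt V(t,x):=V^e(t,x-ct)$; a direct computation using $JL_{c,y}=c\p_x-\p_{xxx}-k\p_x(Q_c^{k-1}(\cdot+y)\,\cdot)$ shows
\[
\p_t \wt V+\p_{xxx}\wt V=\wt g(t,x),
\]
where $\wt g$ is the moving-frame version of
\[
-k\p_x\!\bigl(Q_c^{k-1}(\cdot+y)V^e\bigr)-(I-\Pi^e_{c,y})JL_{c,y}V^e+\p_t y\,\CF(c,y)V^e+f^e.
\]
All three correction terms map $H^1$ boundedly into a fixed space of Schwartz-class functions (the first two because $Q_c^{k-1}$ and the ranges of $I-\Pi^e_{c,y}$ are spatially decaying smooth functions -- after integration by parts in the coefficient functionals the $\p_{xxx}$ is absorbed -- and the third because $\CF(c,y)$ has finite-rank range spanned by derivatives of $Q_c(\cdot+y)$, $V^\pm_c(\cdot+y)$). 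Hence $\|\wt g(t)\|_{H^1_x\cap L^1_x}\le C(1+\sigma)\|V^e(t)\|_{H^1}+C\|f^e(t)\|_{H^1}$. Applying the Airy Duhamel formula $\wt V(t)=W(t-t_0)\wt V(t_0)+\int_{t_0}^t W(t-s)\wt g(s)\,ds$, I then estimate each of the four components of the $ST'$-norm of $\wt V$ using \eqref{hl1}--\eqref{il2}: the $L^\infty_t H^1_x$ piece comes from \eqref{E:temp-1.4} proved above, the $L^\infty_x L^2_t$ piece of $\p_{xx}\wt V$ from \eqref{hl1} and \eqref{il2}, the $L^2_x L^\infty_t$ piece from \eqref{hl3} (which produces a $(1+T)^\rho$ factor), and the $L^4_t L^\infty_x$ piece of $\p_x\wt V$ from \eqref{hl2} and its inhomogeneous analogue. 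Substituting the $L^\infty_t H^1_x$ bound into $\|\wt g\|_{L^1_x L^2_t}$ closes the estimate and yields the $(1+T^4)e^{C\sigma T}$ growth.

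The main obstacle I anticipate is controlling the inhomogeneous Airy estimates uniformly in the drifting, time-dependent potential $Q_c^{k-1}(\cdot+y(t)-ct)$ and in the finite-rank correction $(I-\Pi^e_{c,y(t)})JL_{c,y(t)}$; the drift itself is harmless after passing to the moving frame, but one must verify that the bounds for the coefficient functionals of the finite-rank piece are independent of $y$ (which follows from translation invariance and the exponential decay of $Q_c,V^\pm_c$ together with their derivatives). A secondary technical point is the inhomogeneous version of \eqref{hl2}, which is not stated explicitly and must be obtained either by a Christ--Kiselev argument or by interpolating between the maximal-function and Strichartz bounds; this is where the exponent $4$ in the prefactor $(1+T^4)$ ultimately comes from, since each application of \eqref{hl3} contributes $(1+T)^\rho$ with $\rho$ slightly above $3/4$.
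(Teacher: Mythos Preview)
Your overall strategy---moving frame, Airy Duhamel, KPV smoothing estimates, and Lemma~\ref{sh1} for the $\wt H^1$ and $V^\perp$ pieces---is exactly the paper's approach. However, there is a genuine gap in the space--time argument.

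The claim that the potential term $-k\p_x\bigl(Q_c^{k-1}(\cdot+y)V^e\bigr)$ ``maps $H^1$ boundedly into a fixed space of Schwartz-class functions'' is false. Multiplication by the Schwartz function $Q_c^{k-1}$ gives decay but does \emph{not} improve regularity: the product $Q_c^{k-1}\p_x V^e$ generically has only $L^2$ smoothness when $V^e\in H^1$, so $\p_x(Q_c^{k-1}V^e)\in L^2\cap L^1$ but not $H^1$. Your statement $\|\wt g(t)\|_{H^1_x\cap L^1_x}\le C(1+\sigma)\|V^e(t)\|_{H^1}+\ldots$ therefore fails for the $H^1$ part, and this is precisely what you need for the $L^2_xL^\infty_t$ piece (via \eqref{hl3}, which requires $H^s$, $s>3/4$, on the source) and the $L^4_tL^\infty_x$ piece of $\p_x\wt V$ (via \eqref{hl2}, which requires $\dot H^{3/4}$). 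The finite-rank terms $(I-\Pi^e_{c,y})JL_{c,y}V^e$ and $\p_t y\,\CF(c,y)V^e$ are indeed Schwartz-valued for the reasons you give, but the potential term is not of that type.

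The paper fixes this by a bootstrap in a specific order (Lemma~\ref{sthg}). First it proves the smoothing bound $\|\p_{xx}\wt V^e\|_{L^\infty_xL^2_{[t_0,t_0+T]}}\le C(1+T^{3/2})e^{C\sigma T}\|V^e(t_0)\|_{H^1}$ using \eqref{il2}, which only needs the source in $L^1_xL^2_t$---and $\p_x(Q_c^{k-1}(\cdot+y-ct)\wt V^e)$ does lie in $L^1_xL^2_t$ controlled by $\|\wt V^e\|_{L^\infty_tH^1_x}$ and Lemma~\ref{l2li}. Only \emph{after} this smoothing is in hand can one bound
\[
\int_{t_0}^{t_0+T}\bigl\|\p_x\bigl(Q_c^{k-1}(\cdot+y-cs)\wt V^e\bigr)\bigr\|_{H^1_x}\,ds
\;\lesssim\; T^{1/2}\|Q_c^{k-1}(\cdot+y-ct)\|_{L^2_xL^\infty_t}\,\|\p_{xx}\wt V^e\|_{L^\infty_xL^2_t}+\ldots,
\]
which then feeds into \eqref{hl3} and \eqref{hl2} for the remaining two components of the $ST'$ norm. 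This bootstrap---using the gained $\p_{xx}$ regularity to upgrade the potential term to $L^1_tH^1_x$---is the step your outline skips, and it is also the actual origin of the $(1+T^4)$ prefactor (the chain $(1+T^{3/2})$ from the smoothing step times $(1+T)$ from Lemma~\ref{l2li} times $(1+T)$ from \eqref{hl3}), not repeated applications of \eqref{hl3} with $\rho>3/4$.
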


It is crucial that the coefficient in front of $e^{C\sigma (t-t_0)}\Vert V^{e}(t_{0})\Vert _{\wt H^{1}}$ in \eqref{E:temp-1.4} is 1, which makes an iteration argument possible based on this inequality. 

Our proof is based on perturbative arguments. We split the proof of this proposition into several lemmas. The following technical lemma provides estimates which will be used repeatedly in non-homogeneous estimates throughout this paper. 
\begin{lemma}\label{l2li}
Assuming that $f\in H^1(\R)\cap W^{1,\infty}(\R)$ and $\rho(t)\in C^{1}\left([t_{0},t_{0}+T]\right)$ satisfying $\vert \rho'(t)\vert_{C^{0}([t_{0},t_{0}+T])}\le M$ for some constant $M$, then the following estimates hold

\begin{equation} \label{E:l2li-1} 
\left\Vert f\left(x-\rho(t)\right)\right\Vert_{L^{2}_{x}L^{\infty}_{[t_{0},t_{0}+T]}}\le MT\Vert f'(x)\Vert_{ L^{2}(\R)}+\Vert f(x)\Vert_{ L^{2}(\R)};
\end{equation} 
\begin{equation} \label{E:l2li-2} 
\left\Vert f\left(x-\rho(t)\right)\right\Vert_{L^{\infty}_{x}L^{2}_{[t_{0},t_{0}+T]}}\le MT^{3/2}\Vert f'(x)\Vert_{ L^{\infty}(\R)}+T^{1/2}\Vert f(x)\Vert_{ L^{\infty}(\R)}. \end{equation} 

\end{lemma}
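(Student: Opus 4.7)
The plan is to reduce both estimates to translation-invariant bounds on $f$ and $f'$ via the fundamental theorem of calculus in $t$. Writing
\[
f(x - \rho(t)) = f(x - \rho(t_0)) - \int_{t_0}^t \rho'(s)\, f'(x - \rho(s))\, ds
\]
and using $|\rho'| \le M$, one obtains the pointwise (in $x$) bound
\[
\sup_{t \in [t_0, t_0+T]} |f(x - \rho(t))| \le |f(x - \rho(t_0))| + M \int_{t_0}^{t_0+T} |f'(x - \rho(s))|\, ds,
\]
which trades $\sup_t$ for a time integral of $f'$ at the cost of a factor $M$. This single identity will drive both estimates.

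To establish \eqref{E:l2li-1} I would take the $L^2_x$ norm of the displayed pointwise bound. Translation invariance of Lebesgue measure turns the first term into $\|f\|_{L^2}$. For the second term I would apply Minkowski's integral inequality to move the $L^2_x$ norm inside the $ds$ integral; again by translation invariance each time-slice contributes $\|f'\|_{L^2}$, yielding $MT\|f'\|_{L^2}$. Summing gives the required bound $\|f\|_{L^2} + MT\|f'\|_{L^2}$.

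For \eqref{E:l2li-2}, I would estimate the right-hand side of the pointwise inequality uniformly in $x$ by $\|f\|_{L^\infty} + MT\|f'\|_{L^\infty}$, which therefore controls $|f(x - \rho(t))|$ for all $x$ and $t \in [t_0, t_0+T]$. Then taking the $L^2_t$ norm on an interval of length $T$ picks up a factor $T^{1/2}$, producing the desired $T^{1/2}\|f\|_{L^\infty} + MT^{3/2}\|f'\|_{L^\infty}$. I do not anticipate any genuine obstacle: the hypotheses $f \in H^1 \cap W^{1,\infty}$ and $\rho \in C^1$ with $\|\rho'\|_\infty \le M$ are exactly what legitimizes the FTC step (in particular, $f$ is absolutely continuous), and both conclusions follow from translation invariance together with either Minkowski's integral inequality or the trivial embedding $\|g\|_{L^2_{[t_0, t_0+T]}} \le T^{1/2}\|g\|_{L^\infty_t}$.
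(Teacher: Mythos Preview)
Your proposal is correct and follows essentially the same route as the paper: the fundamental theorem of calculus identity $f(x-\rho(t)) = f(x-\rho(t_0)) - \int_{t_0}^t \rho'(s) f'(x-\rho(s))\,ds$, followed by Minkowski's integral inequality and translation invariance for \eqref{E:l2li-1}, with \eqref{E:l2li-2} handled analogously. The paper gives the details only for the first inequality and omits the second as similar; your treatment of \eqref{E:l2li-2} via the trivial embedding $L^\infty_t \hookrightarrow L^2_{[t_0,t_0+T]}$ is exactly what is intended.
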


\begin{proof}
Since $$f\left(x-\rho(t)\right)=f\left(x-\rho(t_{0})\right)-\int_{t_{0}}^{t}f'\left(x-\rho(s)\right)\rho'(s)ds,$$ by the Minkowski's integral inequality, we have 
\begin{equation}
\begin{split}
\left\Vert f\left(x-\rho(t)\right)\right\Vert_{L^{2}_{x}L^{\infty}_{[t_{0},t_{0}+T]}}
\le & \Vert f(x)\Vert_{ L^{2}(\R)}+ M\left\Vert\int_{t_{0}}^{t_{0}+T}\left\vert f'\left(x-\rho(s)\right)\right\vert ds\right\Vert_{L_{x}^{2}(\R)}\\
\le & \Vert f(x)\Vert_{ L^{2}(\R)}+ M\int_{t_{0}}^{t_{0}+T}\left\Vert f'\left(x-\rho(s)\right)\right\Vert_{L^{2}_{x}}ds\\
\le & \Vert f(x)\Vert_{ L^{2}(\R)}+MT\Vert f'(x)\Vert_{ L^{2}(\R)}.
\end{split}
\end{equation}
The second inequality can be proved in a similar fashion and we omit the details. 

\end{proof}

\begin{lemma}\label{sthg}
Assume $y(t)$ satisfies \;$|\partial_{t}y(t)|_{L^{\infty}}\le\sigma\le 1$. Let $V^{e}(t)=S(t,t_{0})V^{e}(t_{0})$ and $\widetilde{V}^{e}(t,x)= V^{e}(t,x-ct)$ with $V^{e}(t_{0})\in X_{c,y(t_{0})}^{e}$. Then there exists some constant $C$ independent of $y(\cdot)$, $\sigma$, and $T$, such that

\begin{equation} \label{E:sthg-1}
\Vert \partial_{xx}\widetilde{V}^{e}\Vert _{L_{x}^{\infty}L^{2}_{[t_{0},t_{0}+T]}}\le C (1+T^{3/2}e^{C\sigma T})\Vert V^{e}(t_{0})\Vert _{H_x^{1}}.
\end{equation}

\begin{equation} \label{E:sthg-2}
\Vert \widetilde{V}^{e}\Vert _{L^{2}_{x}L^{\infty}_{[t_{0},t_{0}+T]}}\le C (1+T^{4})e^{C\sigma T}\Vert V^{e}(t_{0})\Vert _{H_x^{1}}.
\end{equation}

\begin{equation} \label{E:sthg-3}
\Vert \partial_{x}\widetilde{V}^{e}\Vert _{L^{4}_{[t_{0},t_{0}+T]}L^{\infty}_{x}}\le C (1+T^{3})e^{C\sigma T}\Vert V^{e}(t_{0})\Vert _{H_x^{1}}.
\end{equation}

\end{lemma}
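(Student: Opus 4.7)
The plan is to rewrite $\wt V^e(t,x) := V^e(t, x-ct)$ as the solution of a perturbed Airy equation in the original spatial frame, then combine the Kenig--Ponce--Vega (KPV) space-time estimates \eqref{hl1}--\eqref{il2} with the energy bound of Lemma~\ref{sh1}.

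\textbf{Step 1: Reduction to a perturbed Airy equation.} Since $\Pi^\pm_{c,y} JL_{c,y} V^e = 0$ on $X^e_{c,y}$ by the block form in Lemma~\ref{decompvb}, we have $\Pi^e_{c,y} JL_{c,y} V^e = JL_{c,y} V^e - \Pi^T_{c,y} JL_{c,y} V^e$, and $\Pi^T_{c,y} JL_{c,y} V^e = \alpha(y, V^e)\,\p_x Q_c(\cdot + y)$ for a scalar $\alpha$ linear in $V^e$. Writing $JL_{c,y} V^e = -\p_{xxx} V^e + c\p_x V^e - k\p_x(Q_c^{k-1}(\cdot+y) V^e)$ and passing to the original frame, \eqref{vevp1} becomes
\begin{equation*}
\p_t \wt V^e + \p_{xxx} \wt V^e = F_1 + F_2 + F_3,
\end{equation*}
with $F_1 = -k\,\p_x(Q_c^{k-1}(x-ct+y(t))\wt V^e)$, $F_2 = -\alpha(y,V^e)\,\p_x Q_c(x-ct+y(t))$, and $F_3 = \p_t y\cdot(\CF(c,y) V^e)(t, x-ct)$. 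The crucial feature is that the shift $\rho(t)=ct-y(t)$ satisfies $|\rho'|\le c+\sigma\le c+1$, so Lemma~\ref{l2li} applies to all shifted Schwartz profiles such as $Q_c^{k-1}$, $\p_x Q_c$, and their derivatives.

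\textbf{Step 2: Duhamel plus the KPV estimates.} Letting $W(t)$ denote the Airy group,
\begin{equation*}
\wt V^e(t) = W(t-t_0)V^e(t_0) + \int_{t_0}^t W(t-s)(F_1+F_2+F_3)(s)\,ds.
\end{equation*}
The homogeneous term in each of \eqref{E:sthg-1}--\eqref{E:sthg-3} is bounded immediately by KPV: \eqref{hl1} applied to $\p_x V^e(t_0)\in L^2$ handles $\Vert\p_{xx}W(t-t_0)V^e(t_0)\Vert_{L^\infty_x L^2_t}$; \eqref{hl3} with $\rho\in(3/4,1)$ handles $\Vert W(t-t_0)V^e(t_0)\Vert_{L^2_x L^\infty_t}$; and \eqref{hl2}, after extracting $D_x^{3/4}$, handles $\Vert\p_x W(t-t_0)V^e(t_0)\Vert_{L^4_t L^\infty_x}$. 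The Duhamel contributions are controlled by \eqref{il1}, \eqref{il2}, and a localized version of \eqref{hl3}, reducing everything to bounds on $F_1+F_2+F_3$ in $L^1_x L^2_t$ and $L^1_t H^s_x$ type norms.

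\textbf{Step 3: Forcing estimates.} Lemma~\ref{sh1} gives $\Vert\wt V^e(t)\Vert_{H^1_x}\le e^{C\sigma(t-t_0)}\Vert V^e(t_0)\Vert_{H^1}$, and hence via the 1D embedding $H^1\hookrightarrow L^\infty_x$ also a pointwise bound. For $F_1$ expand by the product rule: the term with $\p_x$ on the weight is bounded in $L^1_x L^2_t$ by $\Vert\p_x[Q_c^{k-1}(x-cs+y)]\Vert_{L^1_x L^2_t}\cdot\Vert\wt V^e\Vert_{L^\infty_t L^\infty_x}$, both factors controlled by Lemma~\ref{l2li} and the energy bound; the term with $\p_x$ on $\wt V^e$ is paired as $\Vert Q_c^{k-1}(x-cs+y)\Vert_{L^2_x L^\infty_t}\cdot\Vert\p_x\wt V^e\Vert_{L^2_x L^2_t}\lesssim (1+T)\,T^{1/2}\Vert V^e\Vert_{L^\infty_t H^1_x}$ by Lemma~\ref{l2li} and Fubini. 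The terms $F_2$ and $F_3$ are handled similarly using $|\alpha|\lesssim\Vert V^e\Vert_{H^1}$ and $|\p_t y|\le\sigma$. Tracking these products yields the $(1+T^{3/2})$ and $(1+T^3)$ factors in \eqref{E:sthg-1} and \eqref{E:sthg-3}. For \eqref{E:sthg-2} one applies \eqref{hl3} on unit-length subintervals and iterates across $[t_0,t_0+T]$; each step contributes a factor $(1+1)^\rho$ together with a Duhamel integration in time, and after roughly $T$ steps the accumulated polynomial growth is $\lesssim T^4$.

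\textbf{Main obstacle.} The delicate point is keeping exactly the factor $e^{C\sigma T}$ (with no additional exponential) in front of polynomial powers of $T$, since this precise form will be needed to close subsequent iteration and fixed-point arguments. This forces us to always quantify $\wt V^e$ through the $L^\infty_t H^1_x$ energy norm, where Lemma~\ref{sh1} gives coefficient~$1$ in front of $e^{C\sigma T}$, so that all $T$-growth comes from Lemma~\ref{l2li} on the localized profiles or from the KPV Duhamel bounds. A circular Gronwall argument closed on the target space-time norms themselves would generate an extra exponential and must be avoided.
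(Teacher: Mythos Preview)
Your overall architecture—Duhamel for $\wt V^e$ viewed as a perturbed Airy solution, then the KPV estimates \eqref{hl1}--\eqref{il2} combined with the energy bound of Lemma~\ref{sh1} and the weight estimates of Lemma~\ref{l2li}—is exactly the paper's approach, and your treatment of \eqref{E:sthg-1} is correct and essentially identical to the paper's.

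The gap is in \eqref{E:sthg-2} and \eqref{E:sthg-3}. Both require, via Minkowski and \eqref{hl3} (resp.\ \eqref{hl2}), a bound on $\int_{t_0}^{t_0+T}\Vert F_1(s)\Vert_{H^1_x}\,ds$, and $\Vert F_1\Vert_{H^1_x}=\Vert\partial_x(Q_c^{k-1}(\cdot+y-cs)\,\wt V^e)\Vert_{H^1_x}$ contains a term of the type $Q_c^{k-1}\partial_{xx}\wt V^e$, which is \emph{not} controlled by the $H^1_x$ energy norm of $\wt V^e$ alone. Your $L^1_xL^2_t$ computation in Step~3 only feeds into \eqref{il1}--\eqref{il2} and hence into \eqref{E:sthg-1}; it does not supply the $L^1_tH^s_x$ input you yourself flagged as needed in Step~2. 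The unit-interval iteration you sketch for \eqref{E:sthg-2} does not escape this: restarting Duhamel at time $j$ still demands $\int_j^{j+1}\Vert F_1(s)\Vert_{H^s_x}\,ds$ for some $s>3/4$, which again brings in $\partial_{xx}\wt V^e$.

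The paper's resolution is to feed \eqref{E:sthg-1} back in: once $\Vert\partial_{xx}\wt V^e\Vert_{L^\infty_xL^2_{[t_0,t_0+T]}}\lesssim (1+T^{3/2}e^{C\sigma T})\Vert V^e(t_0)\Vert_{H^1}$ is available, one bounds
\[
\int_{t_0}^{t_0+T}\Vert Q_c^{k-1}(\cdot+y-cs)\,\partial_{xx}\wt V^e\Vert_{L^2_x}\,ds\le T^{1/2}\,\Vert Q_c^{k-1}(\cdot+y-ct)\Vert_{L^2_xL^\infty_t}\,\Vert\partial_{xx}\wt V^e\Vert_{L^\infty_xL^2_t},
\]
which together with Lemma~\ref{l2li} gives $\int_{t_0}^{t_0+T}\Vert F_1(s)\Vert_{H^1_x}\,ds\lesssim (T^{1/2}+T^3)e^{C\sigma T}\Vert V^e(t_0)\Vert_{H^1}$ (this is \eqref{ii1} in the paper). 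Feeding this into Minkowski plus \eqref{hl3} (resp.\ \eqref{hl2} via $H^1\subset\dot H^{3/4}$) then yields \eqref{E:sthg-2} and \eqref{E:sthg-3} directly on the full interval, with no iteration needed. The logical dependence among the three estimates—\eqref{E:sthg-1} first, then used as an ingredient for the other two—is the idea missing from your sketch.
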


\begin{proof}
Rewrite \eqref{vevp1} with $f=0$ as
$$\partial_{t}V^{e}= JL_{c,y}V^{e}+\Theta(t)V^{e},$$
where $\Theta(t)V=\p_t y \mathcal{F}\big(c,y(t)\big) V - \Pi_{c,y(t)}^{\perp}JL_{c,y(t)}V$.
Clearly, $\widetilde{V}^{e}(t)$ satisfies
\begin{equation}\label{le2}
\partial_{t}\widetilde{V}^{e}=-\partial^{3}_{x}\widetilde{V}^{e}-\partial_{x}\left(kQ^{k-1}_{c}(\cdot+y(t)-ct)\widetilde{V}^{e}\right)+\widetilde{\Theta}(t)\widetilde{V}^{e}\\
\end{equation}
where 
$$\left(\widetilde{\Theta}(t)\widetilde{V}^{e}\right)(\cdot)= \left(\Theta(t)V^{e}\right)(\cdot-ct)=\left(\Theta(t)\widetilde{V}^{e}(t,\cdot+ct)\right)(\cdot-ct).$$
Using the Duhamel's principle, we write \eqref{le2} as
\begin{equation} \label{E:temp-1.5}  \begin{split}
\widetilde{V}^{e}(t)= W(t-t_{0})\widetilde{V}^{e}(t_{0})+\int_{t_{0}}^{t}&W(t-s)\big[\widetilde{\Theta}(s)\widetilde{V}^{e}(s)\\
&-\partial_{x}\left(kQ^{k-1}_{c}\left(\cdot+y(s)-cs\right)\widetilde{V}^{e}(s)\right)\big]ds.
\end{split}\end{equation}

\noindent $\bullet$ {\bf Proof of \eqref{E:sthg-1}.} By \eqref{hl1}, \eqref{il2}, one immediately has
\begin{equation}
\begin{split}
\Vert &\partial_{xx} \widetilde{V}^{e}(t)\Vert _{L_{x}^{\infty}L^{2}_{[t_{0},t_{0}+T]}} \le C\left\Vert \p_x \left( Q^{k-1}_{c}\big(\cdot+y(t)-ct\big)\widetilde{V}^{e}\right)\right\Vert _{L^{1}_{x}L^{2}_{[t_{0},t_{0}+T]}}\\
& \quad + C\left\Vert \int_{t_{0}}^{t}W(t-s)\partial_{xx}\left(\widetilde{\Theta}(s)\widetilde{V}^{e}(s)\right)ds\right\Vert _{L^{\infty}_{x}L^{2}_{[t_{0},t_{0}+T]}} + C \Vert \widetilde{V}^{e}(t_{0})\Vert _{H^1}.
\end{split}
\end{equation}
Using Lemma \ref{l2li}, one has 
\[
\begin{split}
&\left\Vert \p_x \Big( Q^{k-1}_{c}\big(\cdot+y(t)-ct\big)\widetilde{V}^{e}\Big)\right\Vert _{L^{1}_{x}L^{2}_{[t_{0},t_{0}+T]}}\\
\le &
\Vert Q^{k-1}_{c}(\cdot+y(t)-ct)\Vert _{H_{x}^{1}L^{\infty}_{[t_{0},t_{0}+T]}} 
\Vert \wt V^e\Vert _{H_{x}^1 L^{2}_{[t_{0},t_{0}+T]}}
\le  C T^{1/2}(1+T)\Vert \wt{V}^e\Vert _{L^{\infty}_{[t_{0},t_{0}+T]}H_{x}^1}.
\end{split}
\]
From the spatial regularity and decay of functions in $X_{c,y}^{\perp}$ (of dimension 3) and the expression of $\Pi_{c,y(t)}^{\perp}$ given in Lemma \ref{decompvb}, one can easily check that  
$\Vert \Theta(t)\Vert _{\mathcal{L}(L^{2}, H^l)}\le C$ for any $l \ge 0$. 
Therefore, 
we have 
\begin{equation}\label{E:temp-2}
\Vert \widetilde{\Theta}(t)\widetilde{V}^{e} \Vert_{H^{l}_{x}}= \Vert \Theta(t)V^{e}\Vert_{H^{l}_{x}}\le C \Vert V^{e}\Vert_{L^{2}_{x}}= C \Vert \widetilde{V}^{e}\Vert_{L_{x}^2}.
\end{equation}
From the above and Minkowski's inequality, we obtain
\[
\begin{split}
& \left\Vert \int_{t_{0}}^{t}W(t-s)\partial_{xx}\left(\widetilde{\Theta}(s)\widetilde{V}^{e}(s)\right)ds\right\Vert _{L^{\infty}_{x}L^{2}_{[t_{0},t_{0}+T]}}\\
\le &C 
\left\Vert \int_{t_{0}}^{t}\left\Vert W(t-s)\partial_{xx}\left(\widetilde{\Theta}(s)\widetilde{V}^{e}(s)\right)\right\Vert _{L^{\infty}_{x}}ds \right\Vert_{L_{[t_0, t_0+T]}^2}
\\
\le & C  
\left\Vert \int_{t_{0}}^{t} \Vert \widetilde{\Theta}(s)\widetilde{V}^{e}(s)\Vert _{L_{[t_{0},t_{0}+T]}^{\infty}H^{3}_{x}}ds \right\Vert_{L_{[t_0, t_0+T]}^2}
\le C T^{3/2}\Vert \widetilde{V}^{e}\Vert _{L^{\infty}_{[t_{0},t_{0}+T]}L^{2}_{x}}.
\end{split}
\]
Therefore, we obtain
\[
\Vert \partial_{xx}\widetilde{V}^{e}(t)\Vert _{L_{x}^{\infty}L^{2}_{[t_{0},t_{0}+T]}}\le C \Vert \widetilde{V}^{e}(t_{0})\Vert _{H^1}+CT^{1/2}(1+T)\Vert \widetilde{V}^{e}\Vert _{L_{[t_{0},t_{0}+T]}^{\infty}H_{x}^1}.
\]
Inequality \eqref{E:sthg-1} follows from the above inequality and Lemma \ref{sh1}.

\noindent $\bullet$ {\bf Proof of \eqref{E:sthg-2}.} Using \eqref{E:l2li-1}, \eqref{E:sthg-1}, and Lemma \ref{sh1}, we first obtain 
\begin{equation}\label{ii1} \begin{split}
& \int_{t_{0}}^{t_{0}+T}\left\Vert \partial_{x}\left(kQ^{k-1}_{c}(\cdot+y(s)-cs)\widetilde{V}^{e}(s)\right)\right\Vert _{H^{1}_{x}}ds \le C \Vert \widetilde{V}^{e}\Vert _{L_{[t_0, t_0+T]}^1 H_x^1} \\
 &  \qquad \quad + C T^{\frac 12} \Vert Q^{k-1}_{c}(\cdot+y(t)-ct)\Vert_{L_x^2 L_{[t_0, t_0+T]}^\infty} \Vert \partial_{xx} \widetilde{V}^{e}\Vert _{L_x^\infty L_{[t_0, t_0+T]}^2} \\
\le & C (T^{1/2}+T^{3})e^{C\sigma T}\Vert V^{e}(t_{0})\Vert _{H^{1}}.
\end{split}\end{equation}
Along with \eqref{hl3} and Lemma \ref{sh1}, it implies 
\[\begin{split} 
&\left\Vert \int_{t_0}^{t}W(t-s)\partial_{x}\left(kQ^{k-1}_{c}(\cdot+y(s)-cs)\widetilde{V}^{e}(s)\right)ds\right\Vert _{L^{2}_{x}L^{\infty}_{[t_0, t_0+T]}}\\
\le & C\int_{t_{0}}^{t_{0}+T}\left\Vert W(t-s)\partial_{x}\left(kQ^{k-1}_{c}(\cdot+y(s)-cs)\widetilde{V}^{e}(s)\right)\right\Vert _{L^{2}_{x}L^{\infty}_{t\in[s,t_{0}+T]}}ds \\
\le & C (1+T) \int_{t_{0}}^{t_{0}+T} \left\Vert \partial_{x}\left(kQ^{k-1}_{c}(\cdot+y(s)-cs)\widetilde{V}^{e}(s)\right)\right\Vert _{H^{1}_{x}} ds\\
\le & C (1+T^{4})e^{C\sigma T}\Vert V^{e}(t_{0})\Vert _{H^{1}}.
\end{split}\]
Using \eqref{E:temp-2}, in a similar manner we may obtain 
\[\begin{split} 
&\left\Vert \int_{t_{0}}^{t}W(t-s)\widetilde{\Theta}(s)\widetilde{V}^{e}(s)ds\right\Vert _{L^{2}_{x}L^{\infty}_{[t_0, t_0+T]}} \\
\le& \int_{t_{0}}^{t_{0}+T}\Vert W(t-s)\widetilde{\Theta}(s)\widetilde{V}^{e}(s)\Vert _{L^{2}_{x}L^{\infty}_{t\in[s,t_{0}+T]}}ds \\
\le & C (1+T) \int_{t_{0}}^{t_{0}+T} \Vert \widetilde{V}^{e}(s)\Vert _{L_{x}^{2}} ds \le C (1+ T^2) e^{C\sigma T}\Vert V^{e}(t_{0})\Vert _{H^{1}}.
\end{split}\]
Inequality \eqref{E:sthg-2} follows from \eqref{hl3}, \eqref{E:temp-1.5}, and the above inequalities.

\noindent $\bullet$ {\bf Proof of \eqref{E:sthg-3}}: Using the Minkowski's integral inequality, \eqref{hl2} and the fact that $H^{1}(\R)\subset\dot{H}^{3/4}(\R)$, one can verify
\begin{equation}\label{iii1}
\begin{split}
&\left\Vert \partial_{x}\int_{t_{0}}^{t}W(t-s)g(s)ds\right\Vert _{L^{4}_{[t_{0},t_{0}+T]}L^{\infty}_{x}}\\
\le & \int_{t_{0}}^{t_{0}+T}\left\Vert \partial_{x}W(t-s)g(s)\right\Vert _{L^{4}_{t\in[s,t_{0}+T]}L^{\infty}_{x}}ds
\le C\int_{t_{0}}^{t_{0}+T}\Vert g(s)\Vert _{H^{1}_{x}}ds.
\end{split}
\end{equation}
Along with \eqref{E:temp-1.5}, \eqref{hl2}, \eqref{iii1}, and \eqref{ii1}, it implies \eqref{E:sthg-3}.
\end{proof}

With the above preparation, now we prove Proposition \ref{leve}.

\begin{proof}[{\bf Proof of Proposition \ref{leve}}]
By Lemma \ref{sthg} and the Minkowski's integral inequality, one has
\begin{equation}
\begin{split}
&\left\Vert \left(S(t,t_{0})V^{e}(t_{0})\right)\right\Vert_{ST^{c}_{[t_{0},t_{0}+T]}} +  \left\Vert\int_{t_{0}}^{t}\left(S(t,s)f^{e}(s)\right)ds\right\Vert_{ST^c_{[t_0,t_{0}+T]}}\\
\le &\left\Vert \left(S(t,t_{0})V^{e}(t_{0})\right)\right\Vert_{ST^{c}_{[t_{0},t_{0}+T]}} +  \int_{t_{0}}^{t_{0}+T}\left\Vert S(t,s)f^{e}(s)\right\Vert_{ST^c_{t\in [s,t_{0}+T]}}ds
\end{split}
\end{equation}
and thus \eqref{E:leve-1} follows. 
 The other two can be obtained directly from Lemma \ref{sh1}. 
\end{proof}

To end this section, we estimate the difference between solutions to \eqref{eqve3} along base paths $y_i(t)$ and with non-homogeneous terms $f_i(t,x)$, $i=1,2$. We have 

\begin{lemma}\label{diff-hom}
Assume $y_i(t)$ satisfies $\Vert \p_t y_i\Vert_{L^{\infty}_{[t_0,t_0+T]}}\le \sigma\le1$, $i=1,2$. There exists a constant $C>0$ independent of $T>0, y_i(\cdot),\sigma, t_0$, and the non-homogeneous terms $f_i(t,x)$ such that,  for $V_{01}, V_{02} \in H^1$, the solutions $V_i(t) = S_i(t,s) V_{0i}$ of \eqref{eqve3}  along the paths $y_i(t)$ with non-homogeneous terms $f_i(t,x)$ satisfy, \begin{equation*}
\begin{split}
\Vert \Pi_{c, y_2}^e (V_2 - V_1)  & 
\Vert_{ST^{c}_{[t_0,t_0+T]}} 
\le  Ce^{C\sigma T}\big((1+T^4)\big(\Vert  \Pi_{c, y_2(t_0)}^e (V_{02} - V_{01})  \Vert_{H^1}+ \Vert \Pi_{c, y_2}^e (f_2 - f_1)\Vert_{L_t^1 H_x^1}\big) 
 \\ 
& +T^{\frac 12}(1+T^5) \Vert y_{1}-y_{2}\Vert_{C^{0,1}} \big(\Vert V_{01}\Vert_{H^1} + \Vert  f_1(t) \Vert_{L_t^1 H_x^1} \big) 
\big),
\end{split}\end{equation*}

\begin{equation*}
\begin{split}
\Vert  V_2(t_0+T)  - V_1 (&t_0+T)  \Vert_{\wt{H}_{y_2(t_0+T)}^1}\le  e^{C\sigma T}\big(\Vert V_{02}- V_{01} \Vert_{\wt{H}_{y_2(t_0)}^1} + C \Vert \Pi_{c, y_2}^e (f_2 - f_1)\Vert_{L_t^1 H_x^1}\\
&
+ CT^{\frac 12} (1+T^5) \Vert y_{1}-y_{2}\Vert_{C^{0,1}}\big(\Vert V_{01}\Vert_{H^1} + \Vert  f_1(t) \Vert_{L_t^1 H_x^1} \big)\big)
\end{split}
\end{equation*}
and for any $l>0$
\begin{equation*}
\begin{split}
\Vert \big( (I-\Pi_{c, y_2}^e)& ( V_2  - V_1) \big)  (t_0+T)
\Vert_{H^l} \le  e^{C\sigma T}\big( \Vert (I-\Pi_{c, y_2(t_0)}^e)(V_{02}  - V_{01}  )
\Vert_{H^1} \\
& + \Vert (I-\Pi_{c, y_2}^e) (f_2 - f_1)\Vert_{L_t^1 H_x^1} + C T \Vert y_{1}-y_{2}\Vert_{C^{0,1}}\big(\Vert V_{01}\Vert_{H^1} + \Vert  f_1(t) \Vert_{L_t^1 H_x^1} \big)\big)
\end{split}
\end{equation*}
where all the norm in $t$ are taken on the interval $[t_0, t_0+T]$. 
\end{lemma}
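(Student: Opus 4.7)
The plan is to derive an inhomogeneous linear equation of the form \eqref{eqve3} along the path $y_2$ for the difference $W := V_2 - V_1$, and then apply Proposition \ref{leve}. Subtracting the two equations gives
\begin{equation*}
\partial_t W = \Pi^e_{c,y_2} JL_{c,y_2}\Pi^e_{c,y_2} W + \partial_t y_2 \, \mathcal{F}(c,y_2) W + (f_2 - f_1) + \mathcal{R},
\end{equation*}
where
\begin{equation*}
\mathcal{R} = \big(\Pi^e_{c,y_2} JL_{c,y_2}\Pi^e_{c,y_2} - \Pi^e_{c,y_1} JL_{c,y_1}\Pi^e_{c,y_1}\big) V_1 + \big(\partial_t y_2\, \mathcal{F}(c,y_2) - \partial_t y_1\, \mathcal{F}(c,y_1)\big) V_1
\end{equation*}
collects the mismatch between the linearizations at $y_1$ and $y_2$ acting on $V_1$. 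Once $\mathcal{R}$ is suitably controlled, \eqref{E:leve-1} and \eqref{E:temp-1.4} applied to $W$ deliver the first two inequalities; the third is obtained separately via the decoupled $\perp$-equation \eqref{vevp2}.

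The $\mathcal{F}$-terms in $\mathcal{R}$ are routine: by the explicit formula \eqref{E:CF}, $\mathcal{F}(c,y)$ is smooth in $y$ and bounded on $H^1$, so this part is pointwise bounded in $H^1$ by $C\|y_1-y_2\|_{C^{0,1}}\|V_1\|_{H^1}$. For the operator-difference piece I plan to telescope as
\begin{equation*}
\begin{split}
\Pi^e_{c,y_2} JL_{c,y_2}\Pi^e_{c,y_2} - \Pi^e_{c,y_1} JL_{c,y_1}\Pi^e_{c,y_1} = & \ \Pi^e_{c,y_2} JL_{c,y_2}(\Pi^e_{c,y_2} - \Pi^e_{c,y_1}) \\
& + \Pi^e_{c,y_2}(JL_{c,y_2} - JL_{c,y_1})\Pi^e_{c,y_1} \\
& + (\Pi^e_{c,y_2} - \Pi^e_{c,y_1}) JL_{c,y_1}\Pi^e_{c,y_1}.
\end{split}
\end{equation*}
The first and third terms are benign: $\Pi^e_{c,y_2}-\Pi^e_{c,y_1}$ maps into a finite-dimensional space of smooth, rapidly decaying functions with $H^s$-norm bounded by $C_s\|y_1-y_2\|_{L^\infty}\|V_1\|_{H^1}$ for every $s\ge 0$ (after integrating by parts to transfer the derivatives in $JL_{c,y_1}\Pi^e_{c,y_1}V_1$ onto the smooth test functions that appear in \eqref{E:apm}--\eqref{E:Pi}), so the adjacent $JL$ operators lose at most three derivatives and the result stays in $H^1$.

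The main obstacle is the middle term. Because the $-\partial_{xxx}$ principal part of $JL_{c,y}$ is $y$-independent and cancels,
\begin{equation*}
JL_{c,y_2}-JL_{c,y_1} = -k\partial_x\big[(Q_c^{k-1}(\cdot+y_2)-Q_c^{k-1}(\cdot+y_1))\,\cdot\,\big],
\end{equation*}
a first-order operator whose coefficient is $W^{s,\infty}$-bounded by $C_s\|y_1-y_2\|_{L^\infty}$; acting on $V_1\in H^1$ this produces only an $L^2$-bound and therefore does not fit the $L^1_tH^1_x$ input required by Proposition \ref{leve}. I would handle it by the same dispersive machinery as in Lemma \ref{sthg}: express $W^e$ via Duhamel's formula with the Airy group $W(t)$ and bound this contribution using the Kenig--Ponce--Vega estimates \eqref{hl1}--\eqref{il2}, together with Lemma \ref{l2li} applied to the exponentially decaying coefficient $Q_c^{k-1}(\cdot+y_j)$ to place things in the appropriate $L^1_xL^2_t$-type mixed norms; a Cauchy--Schwarz in time converting $L^\infty_t$ bounds on $V_1$ into $L^2_t$ ones is the source of the extra $T^{1/2}$ in the factor $T^{1/2}(1+T^5)$. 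Finally, for $(I-\Pi^e_{c,y_2})(V_2-V_1)$, the decoupled equation \eqref{vevp2} is a linear ODE on the three-dimensional smooth space $X^\perp_{c,y(t)}$, where all $H^l$ norms are equivalent up to constants depending only on $c$; a direct Gr\"onwall argument applied to the $\perp$-component of $\mathcal{R}$ then yields the cleaner factor $CT\|y_1-y_2\|_{C^{0,1}}$ in the third inequality.
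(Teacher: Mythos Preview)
Your overall plan matches the paper's: write the difference equation along $y_2$, telescope the operator mismatch, treat the smooth pieces via smoothness of the projections and $\mathcal{F}$, and apply Proposition \ref{leve}. The one point of divergence is your handling of the middle term $\Pi^e_{c,y_2}(JL_{c,y_2}-JL_{c,y_1})\Pi^e_{c,y_1}V_1$. You assert that it ``does not fit the $L^1_tH^1_x$ input'' of Proposition \ref{leve} and therefore propose to reopen the Airy--Duhamel argument of Lemma \ref{sthg} directly for $W^e$. The paper takes a cleaner route: this term \emph{does} lie in $L^1_tH^1_x$, once one uses the full $ST^c$ bound on $V_1$ rather than only its $L^\infty_tH^1_x$ norm. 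Exactly as in \eqref{ii1}, the worst contribution $\tilde Q\,\partial_{xx}(\Pi^e V_1)$ is controlled by pairing $\|\partial_{xx}\widetilde V_1\|_{L^\infty_xL^2_t}$ (part of $\|V_1\|_{ST^c}$, itself available from Proposition \ref{leve} applied to the $V_1$ equation) against the $L^2_xL^\infty_t$ bound on the localized coefficient $\tilde Q$ from Lemma \ref{l2li}; a Cauchy--Schwarz in $t$ yields the factor $T^{1/2}$. Proposition \ref{leve} can then be applied to $W$ as a black box with all forcing in $L^1_tH^1_x$, giving the bound \eqref{hgt2}. Your direct-Duhamel route would also succeed, but it amounts to reproving a piece of Lemma \ref{sthg}; the paper's route is precisely what the $ST^c$ norm was designed to enable.
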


\begin{remark}
For $T<0$, the estimates in this sections still hold with $T$ replaced by $|T|$. In the case where estimates on $\Pi_{c, y_2}^e V_2 - \Pi_{c, y_1}^e V_1$ is required, it can be obtained by observing 
\[
\Pi_{c, y_2}^e V_2 - \Pi_{c, y_1}^e V_1 = \Pi_{c, y_2}^e ( V_2 - V_1) + O(|y_2 -y_1| \Vert V_1 \Vert).
\]
\end{remark}

\begin{proof} 
Equation \eqref{eqve3} implies 
\begin{equation} \label{E:eqve3-d}
\partial_{t}(V_{1}-V_2)
=\Pi^{e}_{c,y_{2}}JL_{c,y_{2}}\Pi^{e}_{c,y_{2}}(V_{1}-V_2)+\p_t y_2 \mathcal{F}(c,y_2)(V_{1}-V_2)+\Delta_{1}^{2} + f_2 - f_1,
\end{equation}
where
\begin{equation}\label{dh12} \begin{split} 
\Delta_{1}^{2} =& \big(\Pe\J\Pe-\Pee\JJ\Pee+ \p_t y_1\mathcal{F}(c,y_1)  -\p_t y_2 \mathcal{F}(c,y_2)\big) V_{1}\\
=&\Big( \Pe\J(\Pe-\Pee) +\Pe(\J-\JJ)\Pee  \\& + (\Pe-\Pee)\JJ\Pee  + ( \p_t y_1-\p_t y_2 ) \mathcal{F}(c,y_1)\\
&+ \p_t y_2 \big(\mathcal{F}(c,y_1)-  \mathcal{F}(c,y_2) \big)  \Big)V_1.
\end{split}\end{equation}

By Lemma \ref{sh1} and Proposition \ref{leve}, we have
\begin{equation}\label{hgt1}
\begin{split}
\Vert \Pee(&V_{1}-V_2)   \Vert _{ST^{c}_{[t_{0},t]}}
\le CEx(t-t_0)  \Vert \Pi^{e}_{c,y_{2}(t_0)} (V_{01} -V_{02}) \Vert_{H^{1}}\\
&+C\int_{t_{0}}^{t}Ex(t-s) \big( \Vert (\Pee\Delta_{1}^{2}) (s)\Vert _{H_{x}^{1}} + \Vert \Pi_{c, y_2(s)}^e \big(f_2(s) - f_1(s)\big) \Vert_{H_x^1} \big)ds,
\end{split}
\end{equation}
where $Ex(s) = (1+s^{4})e^{C\sigma s}$ and 
\begin{equation}\label{hgt1.5}
\begin{split}
\Vert V_{1}& (t) -V_2(t) \Vert _{\wt{H}_{y_2(t)}^1} \le e^{C\sigma (t-t_0)} \Vert V_{01} -V_{02}  \Vert_{\wt{H}_{y_{2}(t_{0})}^{1}}\\
&+C\int_{t_{0}}^{t} e^{C\sigma (t-s)} \big( \Vert (\Pee\Delta_{1}^{2}) (\cdot)\Vert _{H_{x}^{1}}+ \Vert  \Pi_{c, y_2(s)}^e \big( f_2(s) - f_1(s)\big) \Vert_{H_x^1} \big) ds.
\end{split}
\end{equation}
Using the smoothness and boundedness of $\Pi_{c,y}^e$ and $\mathcal{F}(c,y)$ as well as the fact that $X_{c,y}^\perp$ are finite co-dimensional subspaces of smooth functions, we have
\begin{equation} \label{hgt3}
\begin{split}
&\int_{t_{0}}^{t} \Vert \Pi^{e}_{c,y_{2}} \Delta_1^2 - \Pi^{e}_{c,y_{2}}(JL_{c,y_{1}}-JL_{c,y_{2}})\Pee V_{1}\Vert_{H_x^1}ds \\
\le &C\Vert V_{1}\Vert_{L_{[t_0, t]}^\infty H_x^1} \Vert y_{1}-y_{2}\Vert_{W^{1, 1} ([t_{0},t])}
\end{split}
\end{equation}
Note that 
\begin{equation} \label{E:temp-2.2}
(JL_{c,y_{1}}-JL_{c,y_{2}})\Pee V_{1}= \partial_{x}\left( \tilde Q \Pee V_{1}\right),
\end{equation}
where
$$\tilde Q = \tilde Q (x, y_1, y_2) =k  Q^{k-1}_{c}(\cdot+y_{2})-kQ^{k-1}_{c}(\cdot+y_{1}).$$
Clearly $\frac {\tilde Q}{y_2-y_1}$ and its derivatives in $x$ decay exponentially as $\min \{ |x + y_1|, |x+ y_2|\} \to \infty$. As in the proof of \eqref{ii1}, using \eqref{E:l2li-1}, \eqref{E:sthg-1}, \eqref{E:temp-2.2}, and Lemma \ref{sh1}, 
we obtain 
\begin{equation}\label{hgt2}
\begin{split}
&\int_{t_{0}}^{t_{0}+T} \left\Vert \left(\Pi^{e}_{c,y_{2}}(JL_{c,y_{1}}-JL_{c,y_{2}})\Pee V_{1}\right)(s)\right\Vert _{H^{1}_{x}}ds\\
\le &C(T^{1/2}+ T^{3/2})  \Vert y_{1}-y_{2}\Vert_{L_{[t_{0},t_{0}+T]}^{\infty}}\Vert V_{1}\Vert _{ST^{c}_{[t_{0},t_{0}+T]}}.
\end{split}
\end{equation}
Moreover, Proposition \ref{leve} yields 
$$\Vert V_{1} \Vert _{ST^{c}_{[t_{0},t_{0}+T]}}\le C\big(Ex(T) \Vert V_{01} \Vert_{H^1} + \int_{t_0}^{t_0+T}  Ex(t_0+T-s) \Vert f_1(s) \Vert_{H_x^1}ds  \big).$$
The above estimates 
imply the first two inequalities in the Lemma. The last inequality in the lemma follows similarly by using \eqref{E:eqve3-d}, Proposition \ref{leve}, and the fact that eigenfunctions of $JL_{c,y}$ corresponding to eigenvalues $0$ and $\pm\lambda_c$ are smooth functions, which even allows one to avoid the space-time estimates. 
\end{proof}

\section{Construction of Local Invariant Manifolds of $\mathcal{M}$}\label{construction}
With all the preparation in previous sections, we construct the unique center-stable manifold $\CW^{cs}(\mathcal{M})$ of $\mathcal{M}$, while the center-unstable manifold $\CW^{cu}(\mathcal{M})$can be constructed in a similar manner. The center manifold $\CW^{c}(\mathcal{M})$ of $\mathcal{M}$ is obtained as the intersection of the above center-stable and center-unstable manifolds.

\subsection{Outline of the construction of the center-stable manifold of $\mathcal{M}$.} \label{outline}

We will first fix the traveling speed $c$ and use the global coordinate system \eqref{E:coord-1} to construct the center-stable manifold of the orbit of a single solitary wave 
$$\CM_c=\{Q_c(\cdot+y)\vert y\in \R\}.$$
Eventually we will show that these codim-1 center-stable manifolds $W^{cs}(\CM_c)$ over the directions of $T_{Q_c (\cdot+y)} \CM_c \oplus X_{c, y}^e \oplus X_{c, y}^-$ along $\mathcal{M}_c$, for nearby $c$'s intersect on open subset and thus they can be patched together to form the center-stable manifold of $\CM$. Here in the above, $T_{Q_c (\cdot+y)} \CM_c$ denotes the tangent space of $\CM_c$ at $Q_c(\cdot+y)$. 

In coordinate system \eqref{E:coord-1} $\CW^{cs}(\CM_c)$ is represented as the graph of some mapping $h^{cs}$ 
\begin{equation} \label{E:CU-1} \begin{split}
\CW^{cs}(\CM_c)  = \Phi\big( \big\{ &a^+ = h^{cs} (y, a^{-}, V^e) \mid \\
& (y,  a^-, V^e) \in B^{1} (\delta) \oplus \CX_c^e (\delta)\big\} \big)
\end{split}\end{equation} 
where $\CX_c^e(\delta)$ defined in \eqref{E:CX-delta}. 

Our construction follows the procedure in \cite{JLZ}. Though it has been carried out in details in \cite{JLZ}, for the sake of completeness we briefly describe the procedure here.

To avoid geometric calculations involving bundles, we shall work with $h^{cs}(y, a^-, V)$ defined on $\R\times (-\delta, \delta) \times H^{1} (\delta)$, where $H^{1} (\delta)=\{u|\Vert u \Vert_{H^1}<\delta\}$. However, only the value of $h^{cs}$ on $\R\oplus \CX_c^e(\delta)$ matters. By doing so, the projection operator $ \Pi_{c,y}^e$ will be involved a lot in calculations. The following nonlinear projections will also be used often
\begin{align}
&\wt{\Pi}^e x^{cs} = (y, a^{-}, \Pi_{c, y}^e V ), \quad \text{ where }  x^{cs} = (y, a^{-}, V ). \label{E:wtpi}
\end{align}

Let 
\[
X^{cs} = \R^2\times H^{1}, \quad X^{cs} (\delta) = \{ (y,a^-, V) \in X^{cs} : \Vert V \Vert_{H^1} < \delta\},
\]
and 
\[
X^{cs}_{[t_0,t_1]}=L^{\infty}([t_0,t_1],\R^2)\times ST^{c}_{[t_0,t_1]}, \quad X^{cs}_{[t_0,t_1]}(\delta)=\{ (y,a^-, V) \in X_{[t_0,t_1]}^{cs} : \Vert V \Vert_{ST^{c}_{[t_0,t_1]}} < \delta\}.
\]

As a standard technique in local analysis, we first cut-off the nonlinearities, as well as the off-diagonal linear terms in \eqref{eqyp},  to modify equations \eqref{eqyp}, \eqref{apm} and \eqref{eqve2} into a system defined on $X^{cs} \times \R$. Accordingly, we will work with $h^{cs}(y, a^-, V)$ defined on $X^{cs}(\delta)$. Take a cut-off function 
\begin{equation} \label{E:gamma} 
\gamma\in C_{0}^{\infty}(\mathbb{R}),\;  \text{ s. t. }
\gamma(x)=1, \; \forall \vert x\vert \le 1, \; \; \gamma(x)=0, \;  \forall \vert x\vert \ge 3, \; \; \vert \gamma'\vert_{C^0 (\R)}  \le 1
\end{equation}
and for $\delta >0$, $a^+ \in \R$, and $x^{cs}= (y,a^-, V) \in X^{cs}$, let 
\[
\gamma_\delta (x^{cs}, a^+) = \gamma(\delta^{-1}a^-)\gamma(\delta^{-1}a^+)\gamma(\delta^{-1}\Vert V\Vert_{H_1}). 
\]
and let 
\begin{align*}
\widetilde{G}^{\pm}(x^{cs}, a^+) = &\gamma_{\delta} (x^{cs}, a^+) G^{\pm} (y, a^{+},a^{-},\Pi_{c, y}^eV), \\
\widetilde{G}^T (x^{cs}, a^+) =&  \gamma_{\delta} (x^{cs}, a^+) \big( - \Vert \partial_{x}Q_{c} \Vert_{L^2}^{-2}\langle L_{c,y} J \partial_{x}Q_{c}(\cdot+y), \Pi_{c, y}^e V\rangle \\
&\qquad \qquad \qquad + G^{T}(y,a^{+}, a^{-}, \Pi_{c, y}^eV)\big)\\
\widetilde{G}^e (x^{cs}, a^+) = &\gamma_{\delta} (x^{cs}, a^+) G^{e}(y, a^{+},a^{-},\Pi_{c, y}^eV) 
\end{align*}
where the definitions of $G^{T,\pm,e}$ are given in Section \ref{derivationofeqns}.
The definitions of $\widetilde{G}^{\pm,T,e}$ implies that they are independent of the extra component $(I-\wt \Pi^e)V$ artificially added to avoid the non-flat bundle $\R\oplus \CX^e$. 

Moreover, by the definitions of $G^{\pm,T}$ and the smoothness of the projection operator $\Pi_{c,y}^e$, it holds that for any $m,l\ge 0$, there exists some constant $C$ such that, 
\begin{equation}\label{lipG}
\begin{split}
&\underset{x^{cs},a^+}{\sup}\Vert D_{V}^{m}D_{c, y}^{l}\wt{G}^{T}(x^{cs},a^+)\Vert
\le C\delta^{1-m},\\
&\underset{x^{cs},a^+}{\sup}\Vert D_{a^+,a^-}^{m}D_{c, y}^{l}\wt{G}^{T}(x^{cs},a^+)\Vert
+\underset{x^{cs},a^+}{\sup}\Vert D_{a^+,a^-,V}^{m}D_{c,y}^{l}\wt{G}^{\pm}(x^{cs},a^+)\Vert \le C\delta^{2-m}
\end{split}
\end{equation}
where the above norms are evaluated in the space $\mathcal{L}^{m_l} \left( X^{cs} \times \R, \R\right)$ of $(m+l)-$linear forms on $X^{cs}\times \R $. Denote 
\begin{align*} 
&\widetilde{G}^{cs} (x^{cs}, a^+) = (\widetilde{G}^T, \widetilde{G}^-, \widetilde{G}^e) (x^{cs}, a^+),  \\
&A^{cs} (y, \tilde y) = diag\big(0, -\lambda_{c}, \Pi_{c,y}^{e}JL_{c,y}^e \Pi_{c,y}^{e}+\tilde y \mathcal{F} (c, y) \big).
\end{align*}
We shall consider the following system of $x^{cs}$ and $a^+$, 
\begin{subequations} \label{E:cut}
\begin{equation} \label{E:cut-cs}  
\p_t x^{cs} = A^{cs}\big(y, \widetilde{G}^T (x^{cs}, a^+) \big) x^{cs} + \widetilde{G}^{cs}(x^{cs}, a^+)
\end{equation} 
\begin{equation} \label{E:cut-u}
\p_t a^+ = \lambda_{c} a^+ + \widetilde{G}^+ (x^{cs}, a^+),
\end{equation}
\end{subequations}
which coincides with the system consisting of equations \eqref{eqyp}, \eqref{apm} and \eqref{eqve2} if $|a^{+,-}|\le \delta$, $\Vert V\Vert_{H^1}\le \delta$, and $V \in X_{c, y}^e$. 

The presence of the term 
$\langle L_{c,y} J \partial_{x}Q_{c}(\cdot+y), \Pi_{c, y}^e V\rangle$ in $\widetilde{G}^{T}$ causes  that $\wt G^T$ does not have small Lipschitz constants, which is mostly necessary in constructing local invariant manifolds. 
This technical issue would be handled by introducing the following metric involving a scale constant $A>1$ 
\begin{equation} \label{E:Q-norm} \begin{split}
& \Vert (y, a^-, V) \Vert_{H^1, A} \triangleq |y| + |a^-| + A\Vert V \Vert_{H^1}, \\
& \Vert (y, a^-, V) \Vert_{\wt{H}_{y'}^1, A} \triangleq |y| + |a^-| + A\langle L_{c,y'}\Pi_{c,y'}^e V, \Pi_{c,y'}^eV\rangle^{\frac 1 2} = |y| + |a^-| + A \Vert V \Vert_{\wt H_{y'}^1} \\
& \Vert(y, a^-, V) \Vert_{ST^{c}_{[t_0,t_1]}, A} \triangleq \Vert (y, a^-) \Vert_{L^{\infty}_{[t_0,t_1]}} + A\Vert V \Vert_{ST^{c}_{[t_0,t_1]}}
\end{split}\end{equation} 
where $\Vert \cdot \Vert_{\wt H_{y'}^1}$ was denied in \eqref{tildeh1}. 

After modifying the nonlinearity, we shall construct the local center-stable manifold $\CW^{cs}(\CM_c)$ as the graph $\{ a^+ = h^{cs} (W)\}$ of some $h^{cs}: X^{cs}(\delta) \to \R$. 
In our construction, we fix constants $A, \delta, \mu$ such that 
\begin{equation} \label{E:parameter-1}
\delta<1 ,\quad A>1, \quad \mu<\frac 12, 
\end{equation}
with additional assumptions which will be given later. Define
\begin{equation} \label{E:Gamma-mu} 
\Gamma_{\mu, \delta} = \{h : X^{cs} (\delta)  \to \R  \mid  h(y, 0,0) =0, \;  \Vert h\Vert_{C^0} \le \delta,  Lip(h)_{\Vert \cdot \Vert_{H^1, A}} \le \mu.
\}.
\end{equation}
Here $h(y,0,0) =0$ is required since $\CW^{cs}(\CM_c)$ should contain $\CM_c$. 
It is clear that $\Gamma_{\mu, \delta}$ equipped with $\Vert \cdot\Vert_{C^0}$ is a complete metric space. 

We will perform a type of Lyapunov-Perron method to construct center-stable manifolds. That is,  for any $h \in \Gamma_{\mu, \delta}$ and $\bar x^{cs}\in X^{cs}(\delta)$, let $x^{cs}(t) = (y, a^-, V^e)(t) \in X^{cs}$ be the solution to 
\begin{equation}\label{lp}
\p_t x^{cs} = A^{cs} \big(y, \widetilde{G}^T(x^{cs}, h(x^{cs})\big) \big)x^{cs} + \widetilde{G}^{cs} \big(x^{cs}, h(x^{cs}) \big), \quad x^{cs}(0)= \bar x^{cs}. 
\end{equation}
Then we define $\widetilde{h}(\bar{x}^{cs})$ as 
\begin{equation}\label{newh}
\widetilde{h} (\bar{x}^{cs})
= \bar a^+ =-\int_{0}^{\infty}e^{-\lambda_{c} s}\widetilde{G}^{+} \big(x^{cs}(s), h(x^{cs}(s))\big) ds.
\end{equation}

\begin{remark}\label{R:C-0}
Even though $h$ is defined only on $X^{cs}(\delta)$, due to the cut-off function $\gamma_{\delta}$, 
for any $h \in \Gamma_{\mu, \delta}$, $\alpha \in \{T,  \pm, e\}$, it holds $\widetilde{G}^{\alpha} \big(x^{cs}, h(x^{cs})\big) =0$ whenever $x^{cs} \in X^{cs}\backslash X^{cs}(\delta)$. Thus,  the right side of \eqref{lp} is well-defined for all $x^{cs}\in X^{cs}$. 
\end{remark}

Denote the transformation $h \to \wt h$ as 
\[
\mathcal{T} (h) = \wt h.
\]

The aim is to show that, under suitable assumptions on $A$, $\delta$ and $\mu$ , $\wt h \in \Gamma_{\mu, \delta}$ is well-defined and $\CT$ is a contraction on $\Gamma_{\mu, \delta}$. The graph of the unique fixed point, restricted to the set 
\[
B^{1} (\delta) \oplus \CX^e (\delta) 
\]
would be the desired center-stable manifold $\CW^{cs}(\CM_c)$. 

The framework described above allows us to work in a flat space $X^{cs}$ instead of non-flat bundle $\R\oplus \CX^e$, which will bring us convenience in the proof of the smoothness of local invariant manifolds. In fact, those extensions and modifications of \eqref{eqyp}, \eqref{apm} and \eqref{eqve2} to cooperate with our framework does not change the local invariant manifolds. More precisely, we have the following lemma. 

\begin{lemma} \label{L:reduce} 
The following statements hold. 
\begin{enumerate} 
\item Suppose $$x^{cs}(t)=(y(t),a^{-}(t), V(t))$$ satisfies \eqref{E:cut-cs} on $[t_1, t_2]$ for some $a^+ \in C^0([t_1, t_2], \R)$ and $ \wt{\Pi}^ex^{cs}(t_0)=x^{cs}(t_0)$ for some $t_0 \in [t_1, t_2]$, then $\wt{\Pi}^e x^{cs}(t) =x^{cs}(t)$ for all $t \in [t_1, t_2]$. 
\item Assume $h_j \in \Gamma_{\mu, \delta}$, $j=1,2$, satisfy $h_1(x^{cs})=h_2(x^{cs})$ for all $x^{cs} \in \R\oplus \CX_c^{e}(\delta)$. Then $\tilde h_j$, $j=1,2$, defined in \eqref{newh} are also identical in $\R\oplus \CX_c^{e}(\delta)$. 
\end{enumerate}
\end{lemma}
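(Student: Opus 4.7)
The proof naturally splits into the two parts, and the whole plan rests on the fact that the extension of the $V$-equation from \eqref{eqve} to \eqref{eqve2} was engineered precisely so that the $\Pi^\perp$-component of $V$ satisfies a decoupled linear ODE in a finite-dimensional fiber.

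For part (1), I would project the $V$-component of \eqref{E:cut-cs} onto $X_{c,y}^\perp$ and show that $V^\perp(t) := \Pi_{c,y(t)}^\perp V(t)$ satisfies a well-posed homogeneous linear ODE. Three observations make this work: (a) $\Pi_{c,y}^\perp \Pi_{c,y}^e JL_{c,y}\Pi_{c,y}^e V = 0$, so the principal part drops out; (b) $\tilde G^e$ inherits from $G^e$ the property that it takes values in $X_{c,y}^e$ (the cutoff $\gamma_\delta$ is scalar and does not affect the range), so $\Pi_{c,y}^\perp \tilde G^e = 0$; (c) the $\partial_t y\,\mathcal{F}(c,y)V$ term, projected by $\Pi_{c,y}^\perp$ and combined with $\partial_t y\,(\partial_y \Pi_{c,y}^\perp)V$ coming from differentiating $V^\perp = \Pi_{c,y}^\perp V$ in $t$, collapses to $-\partial_t y\,\partial_y \Pi_{c,y}^\perp V^\perp$ via the definition \eqref{E:CF} and the identity \eqref{E:PiPi}, exactly reproducing the calculation that led to \eqref{eqvperp}. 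The resulting equation is a finite-dimensional linear homogeneous ODE, so $V^\perp(t_0)=0$ implies $V^\perp(t)\equiv 0$, which is equivalent to $\wt\Pi^e x^{cs}(t)=x^{cs}(t)$.

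For part (2), I would use part (1) together with uniqueness of the ODE \eqref{lp}. Fix $\bar x^{cs} \in \R \oplus \CX_c^e(\delta)$ and let $x^{cs}_j(t)$ be the global solution of \eqref{lp} associated with $h_j$ (existence is provided by Remark \ref{R:C-0}). By part (1), applied with the choice $a^+(t) = h_j(x^{cs}_j(t))$, the orbit $x^{cs}_j(t)$ remains in $\R \oplus \CX_c^e$ for all $t\ge 0$. At any such time either $x^{cs}_j(t) \in \R \oplus \CX_c^e(\delta)$, in which case the hypothesis gives $h_1(x^{cs}_j(t))=h_2(x^{cs}_j(t))$, or $x^{cs}_j(t) \notin X^{cs}(\delta)$, in which case $\gamma_\delta \equiv 0$ annihilates every $\tilde G^\alpha$ so the value of $h_j$ is immaterial. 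Hence the right-hand sides of \eqref{lp} with $h_1$ and $h_2$ agree along the orbit, and uniqueness of ODEs gives $x^{cs}_1 \equiv x^{cs}_2$. The defining formula \eqref{newh} then yields $\tilde h_1(\bar x^{cs})=\tilde h_2(\bar x^{cs})$.

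The real content is concentrated in part (1), and its main (mild) obstacle is bookkeeping: one must verify that the modification introduced in \eqref{eqve2} via $\mathcal{F}$ and the cutoff in $\tilde G^e$ preserves the geometric invariance of $\R \oplus \CX_c^e$. Once this algebraic calculation is in hand — essentially identical to the derivation of \eqref{eqvperp} — part (2) is a short logical step combining invariance with uniqueness, with the cutoff $\gamma_\delta$ handling the regions outside $X^{cs}(\delta)$ where $h_1$ and $h_2$ may legitimately disagree.
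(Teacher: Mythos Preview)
Your proposal is correct and follows essentially the same approach as the paper. The paper's proof is a one-line reference to Lemma~\ref{sh1} together with the observation that $(I-\Pi_{c,y}^e)\widetilde G^e=0$; your part~(1) simply unpacks this by reproducing the projection computation \eqref{eqvperp} that underlies the invariance statement in Lemma~\ref{sh1}, and your part~(2) is the expected uniqueness argument.
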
 

\begin{proof}
Since $\Pi_{c, y}^e \wt G^e=0$, this lemma is just an easy application of Lemma \ref{sh1}. 
\end{proof}

\subsection{Apriori estimates}
In this subsection, we utilize the smoothing space-time estimates established in Section \ref{LinearAnalysis} to obtain apriori estimates. The strategy is to derive small time period estimates with small exponential growth, then by iteration we obtain global in time estimates with the same exponential growth. The Hamiltonian structure plays a crucial role in our iteration step. In particular, the positivity of the bilinear form $\langle L_{c,y}\cdot, \cdot\rangle$ in $X_{c,y}^e$ guarantees the coefficient in front of the exponential term is $1$, so iteration will not generate large exponential growth. 

We start the subsection with several  estimates that will be used frequently throughout this paper. 

\begin{lemma}\label{bilinear}
If $u,v\in ST^c_{[t_0,t_0+T]}$, the following bilinear estimate holds $$\Vert \p_x(uv)\Vert_{L^1_{[t_0,t_0+T]}H_x^1}\le C(T^{\frac 12} +T)\Vert u \Vert_{ST^{c}_{[t_0,t_0+T]}}\Vert v \Vert_{ST^{c}_{[t_0,t_0+T]}}. $$
\end{lemma}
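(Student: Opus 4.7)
The plan is first to reduce to the flat norm $\Vert\cdot\Vert_{ST'}$. Setting $\tilde u(t,x)=u(t,x-ct)$ and $\tilde v(t,x)=v(t,x-ct)$, one has $\widetilde{uv}=\tilde u\tilde v$ and $\widetilde{\partial_x u}=\partial_x\tilde u$ pointwise, so it suffices to prove
\[
\Vert \partial_x(\tilde u\tilde v)\Vert_{L^1_{[t_0,t_0+T]}H^1_x}\le C(T^{1/2}+T)\Vert\tilde u\Vert_{ST'}\Vert\tilde v\Vert_{ST'}.
\]
Abusing notation, I drop the tildes from here on.

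Next I would split
\[
\Vert\partial_x(uv)\Vert_{L^1_tH^1_x}\le \Vert\partial_x(uv)\Vert_{L^1_tL^2_x}+\Vert\partial_{xx}(uv)\Vert_{L^1_tL^2_x},
\]
and bound the first piece using the $H^1$-algebra property:
\[
\Vert\partial_x(uv)\Vert_{L^1_tL^2_x}\le CT\Vert u\Vert_{L^\infty_tH^1_x}\Vert v\Vert_{L^\infty_tH^1_x},
\]
which contributes the $T$ term. For the second piece, expand $\partial_{xx}(uv)=(\partial_{xx}u)v+2(\partial_x u)(\partial_x v)+u(\partial_{xx}v)$ and estimate the two extremal terms symmetrically via Minkowski, Fubini, and Cauchy--Schwarz in $t$:
\[
\Vert(\partial_{xx}u)v\Vert_{L^1_tL^2_x}\le T^{1/2}\Vert(\partial_{xx}u)v\Vert_{L^2_xL^2_t}\le T^{1/2}\Vert\partial_{xx}u\Vert_{L^\infty_xL^2_t}\Vert v\Vert_{L^2_xL^\infty_t},
\]
which is exactly what the $ST'$ norm is designed to control, and contributes the $T^{1/2}$ term.

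For the cross term, I would combine the $L^4_tL^\infty_x$ norm of $\partial_x u$ with the $L^\infty_tH^1_x$ norm of $v$:
\[
\Vert(\partial_x u)(\partial_x v)\Vert_{L^1_tL^2_x}\le \Vert\partial_x u\Vert_{L^1_tL^\infty_x}\Vert\partial_x v\Vert_{L^\infty_tL^2_x}\le T^{3/4}\Vert\partial_x u\Vert_{L^4_tL^\infty_x}\Vert v\Vert_{L^\infty_tH^1_x}.
\]
Since $T^{3/4}\le T^{1/2}+T$ for every $T\ge 0$ (split into $T\le 1$ and $T\ge 1$), this is absorbed into the stated bound. Summing the three contributions gives the claim.

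The only mildly delicate point is the cross term $(\partial_x u)(\partial_x v)$, since none of the four component norms of $ST'$ are directly adapted to a product of two first derivatives; but as just noted, the $L^4_tL^\infty_x\cdot L^\infty_tL^2_x$ Hölder pairing yields the intermediate scaling $T^{3/4}$, which is harmless. All other steps are routine Hölder and Minkowski manipulations, so I do not anticipate any real obstacle.
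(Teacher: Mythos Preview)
Your proof is correct and essentially identical to the paper's: the same reduction to the untilted variables, the same $H^1$-algebra bound for the first-order piece giving the $T$ term, the same $L^\infty_xL^2_t\cdot L^2_xL^\infty_t$ pairing for the $(\partial_{xx}u)v$ terms giving $T^{1/2}$, and the same $L^4_tL^\infty_x$ versus $L^\infty_tL^2_x$ H\"older pairing for the cross term yielding $T^{3/4}\le T^{1/2}+T$. The only cosmetic difference is that the paper applies H\"older in $t$ with exponents $(4,4/3)$ directly on the cross term rather than your $(1,\infty)$ followed by $L^1_t\le T^{3/4}L^4_t$, but the outcome is the same.
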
 

\begin{proof}
Let $\tilde u(t, x) = u(t, x-ct)$ and apply the same notation convention to $v$. Since $\Vert \p_x(uv) \Vert_{L_{[t_0, t_0+T]}^1 L_x^2} = \Vert \p_x(\tilde u \tilde v) \Vert_{L_{[t_0, t_0+T]}^1 L_x^2}$, we may estimate the latter in terms of the $ST'_{[t_0,t_0+T]}$ norm of $\tilde u$ and $\tilde v$. Firstly, since $H^1 \subset L^{\infty}$, one can first estimate 
$$\Vert (\p_x\tilde  u)\tilde v+(\p_x \tilde v)\tilde u\Vert_{L^1_{[t_0,t_0+T]}L^2_x}\le CT\Vert \tilde u\Vert_{L^\infty_{[t_0,t_0+T]}H_x^1}\Vert \tilde v\Vert_{L^\infty_{[t_0,t_0+T]}H_x^1}.$$
Moreover, by straightforward calculation, one has 
\begin{equation*}
\begin{split}
\Vert \partial_{x}\tilde u \p_x \tilde v\Vert _{L^1_{[t_0,t_0+T]}L_{x}^{2}}
\le & \Vert \partial_{x}\tilde u\Vert _{L^{4}_{[t_{0},t_{0}+T]}L_{x}^{\infty}}\Vert \partial_{x}\tilde v \Vert _{L^{4/3}_{[t_{0},t_{0}+T]}L_{x}^{2}}\\
\le &T^{3/4}\Vert \partial_{x}\tilde u\Vert _{L^{4}_{[t_{0},t_{0}+T]}L_{x}^{\infty}}\Vert \partial_{x}\tilde v\Vert _{L^{\infty}_{[t_{0},t_{0}+T]}L_{x}^{2}},\\
\end{split}
\end{equation*}
and 
\begin{equation*}
\begin{split}
\Vert \tilde u \p_{xx} \tilde v\Vert _{L^1_{[t_0,t_0+T]}L_{x}^{2}}
\le & T^{1/2}\Vert \tilde u\partial_{xx}\tilde v\Vert _{L_{x}^{2}L^{2}_{[t_{0},t_{0}+T]}}\\
\le &T^{1/2}\Vert \tilde u\Vert _{L^{2}_{x}L^{\infty}_{[t_{0},t_{0}+T]}}\Vert \partial_{xx}\tilde v\Vert _{L^{\infty}_{x}L^{2}_{[t_{0},t_{0}+T]}}.
\end{split}
\end{equation*}
The term $\Vert \tilde v \p_{xx} \tilde u\Vert _{L^1_{[t_0,t_0+T]}L_{x}^{2}}$ can be estimated similarly, which completes the proof. 
\end{proof}

Next technical lemma will be used frequently to estimate the difference between two solutions to \eqref{lp}.

\begin{lemma}\label{e12}
Let $V \in L^{\infty}_{[t_{0},t_{0}+T]}L^{2}_{x}$ and $y(t) \in C^{1}([t_0,t_0+T],\R)$. For any $m\ge 0$ it holds 
$$
\Vert (\p_{y}^{m}\Pi_{c,y}^{e})|_{y=y(t)} V\Vert_{
ST^{c}_{[t_{0},t_{0}+T]}
}\le C (1 + T^{\frac 12})\big(1+(c+\Vert \p_t y(t)\Vert_{C^0})T\big)
\Vert V\Vert_{L^{\infty}_{[t_{0},t_{0}+T]}L^{2}_{x}}.
$$
\end{lemma}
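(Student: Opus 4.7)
\smallskip

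\noindent\emph{Plan.} The key algebraic observation is that $\p_y^m$ annihilates the identity piece of $\Pi^e_{c,y} = I - \Pi^T_{c,y} - \Pi^+_{c,y} - \Pi^-_{c,y}$ whenever $m \ge 1$. From the explicit formulas \eqref{E:apm}--\eqref{E:Pi}, each $\Pi^\alpha_{c,y}V$ with $\alpha\in\{T,+,-\}$ is a finite sum of rank-one terms of the form $\langle h_i(\cdot+y),V\rangle k_i(\cdot+y)$, where the fixed functions $h_i,k_i$ are built from $V_c^\pm$, $L_cV_c^\pm$, $\p_xQ_c$, and $Q_c^{k-1}$. All such functions---together with every derivative---decay exponentially. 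Differentiating $m$ times in $y$ via Leibniz's rule gives
\[
(\p_y^m\Pi^e_{c,y})V = \sum_i \sum_{j=0}^{m} c_{ij}\,\langle h_i^{(j)}(\cdot+y),V\rangle\,k_i^{(m-j)}(\cdot+y),
\]
a finite sum of rank-one operators whose constituent functions lie in every $W^{s,p}(\R)$.

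Next, I would evaluate at $y=y(t)$ and shift $x\mapsto x-ct$. Setting $\alpha_{ij}(t) := \langle h_i^{(j)}(\cdot+y(t)),V(t)\rangle$ and $\rho(t) := y(t)-ct$, the quantity $\bigl[(\p_y^m\Pi^e_{c,y})|_{y=y(t)}V(t)\bigr](x-ct)$ becomes a finite linear combination of model terms $\alpha_{ij}(t)\,k_i^{(m-j)}(x+\rho(t))$. Cauchy--Schwarz yields $\Vert\alpha_{ij}\Vert_{L^\infty_{[t_0,t_0+T]}} \le \Vert h_i^{(j)}\Vert_{L^2}\,\Vert V\Vert_{L^\infty_tL^2_x}$, and the hypothesis gives $|\rho'(t)|\le M:=c+\Vert\p_ty\Vert_{C^0}$.

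It then suffices to bound, for a fixed Schwartz-class $g$, the four constituents of $\Vert\cdot\Vert_{ST'_{[t_0,t_0+T]}}$ on the model expression $\alpha(t)\,g(x+\rho(t))$. The $L^\infty_tH^1_x$ norm is trivially $\le \Vert\alpha\Vert_{L^\infty}\Vert g\Vert_{H^1}$, and the $L^4_tL^\infty_x$ norm of $\p_x$ is $\le T^{1/4}\Vert\alpha\Vert_{L^\infty}\Vert g'\Vert_{L^\infty}$. The two mixed norms $\Vert\alpha\cdot\p_{xx}g(\cdot+\rho)\Vert_{L^\infty_xL^2_t}$ and $\Vert\alpha\cdot g(\cdot+\rho)\Vert_{L^2_xL^\infty_t}$ are the main point: pulling out the bounded scalar $\alpha(t)$ reduces them to norms of the translating Schwartz function $g^{(\cdot)}(x+\rho(t))$, to which Lemma \ref{l2li} applies directly with constant $M$, yielding factors $T^{1/2}(1+MT)$ and $(1+MT)$ respectively. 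Taking the maximum of the four norms and summing over the finitely many rank-one terms produces the claimed bound $C(1+T^{1/2})(1+MT)\,\Vert V\Vert_{L^\infty_tL^2_x}$.

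There is no substantive obstacle beyond careful bookkeeping: the rank-one/smoothing structure of $\p_y^m\Pi^e_{c,y}$ converts the low-regularity input $V\in L^\infty_tL^2_x$ into an output supported on a finite-dimensional family of translating Schwartz profiles, for which Lemma \ref{l2li} is tailor-made. The only delicate point is recognizing that the shift $x\mapsto x-ct$ turns the translation speed into $\rho'(t)=\dot y(t)-c$, so that both $c$ and $\Vert\dot y\Vert_{L^\infty}$ enter $M$ additively---this is what produces the factor $1+(c+\Vert\p_ty\Vert_{C^0})T$ on the right-hand side.
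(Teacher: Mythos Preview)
Your proposal is correct and follows essentially the same route as the paper: both arguments reduce $\p_y^m\Pi^e_{c,y}$ to a finite sum of rank-one operators built from translating Schwartz profiles, bound the scalar coefficients by Cauchy--Schwarz against $\Vert V\Vert_{L^\infty_t L^2_x}$, and invoke Lemma~\ref{l2li} on the profiles with effective speed $M=c+\Vert\p_t y\Vert_{C^0}$. Your write-up is simply more explicit---spelling out the Leibniz expansion, the shift $x\mapsto x-ct$, and each of the four $ST'$ components---whereas the paper compresses this into a two-line appeal to Lemma~\ref{l2li} and the explicit form of $\Pi_{c,y}^{\pm,T}$.
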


\begin{proof}
Since $\p_{y}^{m}\Pi_{c,y}^{e}=-\p_{y}^{m}\Pi_{c,y}^{\perp}$, it is equivalent to estimate $\Vert \p_{y}^{m}\Pi_{c,y}^{\perp}V\Vert_{
ST^{c}_{[t_{0},t_{0}+T]}
}$. On the one hand, by Lemma \ref{l2li}, we have
\[\begin{split}
& \Vert \p_x^m V^{\pm}_{c}(\cdot+y(t))\Vert_{ST^{c}_{[t_{0},t_{0}+T]}} + \Vert \partial_x^m Q_{c}(\cdot+y(t))\Vert_{ST^{c}_{[t_{0},t_{0}+T]}} \\
\le & C(1 + T^{\frac 12})\big(1+(c+\Vert \p_t y(t)\Vert_{C^0})T\big),
\end{split}\]

On the other hand, using the high regularity of $V^{\pm}_c$ and $Q_c$, one has 
$$\vert\langle  \p_x^m V^{\pm}_{c}(\cdot+y(t), V\rangle \vert + \vert\langle  \partial_x^m Q_{c}(\cdot+y(t)), V\rangle \vert \le C\|V\|_{L^2_x}. $$
By the explicit expressions of $\Pi_{c,y}^{\pm,T}$ given in Lemma \ref{decompvb}, the desired estimate follows right away. 
\end{proof}

An immediate consequence of the above two lemmas along with Lemma \ref{l2li} is the following. 

\begin{lemma}\label{lipGe}
For any $x^{cs}\in X^{cs}_{[t_0,t_0+T]}(a)$ and $m_j \ge 0$, $j=1, \ldots, 5$, with $m_4+m_5>0$, it holds that 
$$\Vert D_V^{m_1} \p_{a^\pm}^{m_2} \p_{y}^{m_3}\wt{G}^{e}(x^{cs},a^+)\Vert_{\mathcal{L}^{m_1} (ST_{[t_0,t_0+T]}^c, L^1_{[t_0,t_0+t]}H_x^1)}\le CT^{\frac 1 2} (1+T^2)(a+\delta)^{2-m_1-m_2}$$
\[
\Vert D_V^{m_4} \p_{a^\pm, y}^{m_5} \big( A^{cs} (y, \wt G^T(x^{cs}, a^+)) \big) \Vert_{\mathcal{L}^{m_4}\big( L_{[t_0, t_0+T]}^\infty H_x^1, \CL( X^{cs}_{[t_0,t_0+T]}, L^1_{[t_0,t_0+t]}H_x^1)\big)}\le C T^{\frac 12} (1 + T).
\]
\end{lemma}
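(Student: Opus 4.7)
The strategy is to expand both $\wt G^e$ and $A^{cs}$ into their explicit components, reduce everything to polynomials of order $\ge 2$ in the small quantities $(a^\pm, V)$ with smooth rapidly decaying coefficients in $x+y$, and then combine three tools: Lemma \ref{bilinear} for the $T^{1/2}(1+T)$ prefactor extracted from the $\p_x$ inside the nonlinearity, Lemma \ref{l2li} for uniform $ST^c$-norms of translated smooth functions such as $Q_c^{k-1}(\cdot+y)$ and $V_c^\pm(\cdot+y)$, and Lemma \ref{e12} for derivatives of $\Pi_{c,y}^e$ arising either from $\p_y^{m_3}$ or from differentiating the coefficients of $A^{cs}$. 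The pointwise Lipschitz estimates \eqref{lipG} then track the power of $(a+\delta)$.

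For the bound on $\wt G^e$, recall from Section \ref{derivationofeqns} that
\[
\wt G^e(x^{cs}, a^+) = \gamma_\delta \Pi_{c,y}^e \p_x G_1 - \gamma_\delta a^\pm \bar G^T \Pi_{c,y}^e \p_x V_c^\pm(\cdot+y),
\]
where every monomial of $G_1$ has the form $f(\cdot+y) w^p$ with $p\ge 2$, $f$ Schwartz, and $w = a^+V_c^+(\cdot+y) + a^- V_c^-(\cdot+y) + \Pi_{c,y}^e V$. Writing $\p_x(f w^p)$ as a product of two factors and applying Lemma \ref{bilinear} produces the $T^{1/2}(1+T)$ prefactor against $L_t^1 H_x^1$; any remaining $w$ factors are absorbed by uniform $L^\infty_{t,x}$ bounds on $ST^c$-norms supplied by Lemma \ref{l2li}, which may cost an additional factor of $T$. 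Each differentiation in $V$ or $a^\pm$ removes one power of $w$, so combined with \eqref{lipG} acting on the cutoff we obtain the factor $(a+\delta)^{2-m_1-m_2}$. Differentiations $\p_y^{m_3}$ either fall on translated smooth factors (handled by Lemma \ref{l2li}) or on $\Pi_{c,y}^e$ (handled by Lemma \ref{e12}), each contributing at most one extra factor of $T$, which accounts for the overall $(1+T^2)$ in the stated bound. The corrective term $a^\pm \bar G^T \Pi_{c,y}^e \p_x V_c^\pm$ is treated identically, using \eqref{lipG} on $\bar G^T$.

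For the bound on $A^{cs}(y, \wt G^T)$, only the third diagonal block $B := \Pi_{c,y}^e JL_{c,y}^e \Pi_{c,y}^e + \wt G^T \mathcal{F}(c,y)$ is nontrivial after differentiation, since the $0$ and $-\lambda_c$ entries vanish. The hypothesis $m_4+m_5>0$ is essential: $B$ contains an unbounded $\p_{xxx}$ piece, and at least one derivative is needed to remove this loss. A $D_V$ derivative acts only on the scalar $\wt G^T$, bounded by $\delta^{1-m_4}$ through \eqref{lipG}, and leaves the bounded operator $\mathcal{F}(c,y)$, so the map $X^{cs}_{[t_0,t_0+T]} \to L_t^1 H_x^1$ picks up a factor $T$ by direct time integration over an interval of length $T$. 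A $\p_{a^\pm, y}$ derivative hitting $B$ either produces $\p_y \Pi_{c,y}^e$ outside (Lemma \ref{e12}) or the smooth multiplier $\p_y L_{c,y}^e = -\p_y(kQ_c^{k-1}(\cdot+y))$ in the middle, whose single $\p_x$ can be paired with the $\p_{xx}V$ component of the $ST^c$-norm via Lemma \ref{bilinear} to give exactly the $T^{1/2}(1+T)$ prefactor. The principal technical obstacle is precisely this asymmetry between the input $ST^c$-norm and the output $L^1_t H_x^1$-norm: it is resolved by observing that any single derivative in $(V, y, a^\pm)$ converts the unbounded dispersive part of $B$ into an operator controllable by the bilinear estimate.
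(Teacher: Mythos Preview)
Your approach is correct and essentially the same as the paper's. The only organizational difference is that the paper first splits $\wt G^e = \wt G - \wt R^e$, where $\wt G = \gamma_\delta G$ is the raw polynomial nonlinearity (without projection) and $\wt R^e = \gamma_\delta(G - G^e)$ collects the projection correction $\Pi_{c,y}^\perp G$ together with the second-fundamental-form term $a^\pm \bar G^T \Pi_{c,y}^e \p_x V_c^\pm(\cdot+y)$; since $\wt R^e$ lands in the finite-dimensional smooth subspace $X_{c,y}^\perp$, it is estimated pointwise in $H_x^1$ (yielding a factor $CT\delta^{2-m_1-m_2}$) without ever invoking Lemma~\ref{bilinear} or Lemma~\ref{e12}. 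You instead keep $\Pi_{c,y}^e$ on the outside and appeal to Lemma~\ref{e12} for its $y$-derivatives, which amounts to the same thing once one writes $\Pi_{c,y}^e = I - \Pi_{c,y}^\perp$. For the $A^{cs}$ estimate the two arguments coincide: the paper isolates $\p_y^l(JL_{c,y})V = k\p_x\big(\p_x^l(Q_c^{k-1})(\cdot+y)V\big)$ and bounds it exactly as you describe, via the bilinear-type estimate (cf.\ the proof of \eqref{ii1}) to extract $T^{1/2}(1+T)$, with the remaining $\Pi_{c,y}^e$, $\CF$, and $\wt G^T$ factors handled by their smoothness in $y$ and the bounds \eqref{lipG}.
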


Here $\mathcal{L}^l \big(Z_1, Z_2\big)$ denotes the space of $l-$linear transformations from space $Z_1$ to space $Z_2$.  In the above differentiations, $\p_{a^\pm}^{m_2} \p_{y}^{m_3}\wt{G}^{e}$ and $\p_{a^\pm, y}^{m_5} A^{cs}$ are point-wise partial derivatives and the multi-linear operators resulted in the differentiations are of $V$ only.

\begin{proof}
We first consider $\wt G^e$. For convenience, we let 
$$\widetilde{G}(x^{cs},a^+)=\gamma_{\delta}(x^{cs},a^+)G(y,a^{+},a^{-},V^{e}),$$
and
$$\widetilde{R}^{e}(x^{cs},a^+)=\widetilde{G}(x^{cs},a^+)-\widetilde{G}^e(x^{cs},a^+) = \gamma_{\delta}(x^{cs},a^+) \big( G(x^{cs},a^+)- G^e(x^{cs},a^+) \big).$$
Recalling the definitions of $G$ and $G^e$ in\eqref{E:G-1} and \eqref{eqve}, respectively, the difference between $G^e$ and $G$ consists of terms of high spatial regularity smoothly depending on $c, y, a^\pm \in \R$ and $V \in L^2$, it is straightforward to verify 
$$\Vert  D_V^{m_1} \p_{a^\pm}^{m_2} \p_{y}^{m_3} \widetilde{R}^{e}(x^{cs},a^+)\Vert _{\mathcal{L}^{m_1}  (H_x^1 , H_x^1)}\le C \delta^{2-m_1-m_2},$$ 
which implies 
$$\Vert  D_V^{m_1} \p_{a^\pm}^{m_2} \p_{y}^{m_3} \widetilde{R}^{e}(x^{cs},a^+)\Vert _{\mathcal{L}^{m_1}   \big(ST_{[t_0,t_0+T]}^c, L^1_{[t_0,t_0+t]}H_x^1\big)} \le CT \delta^{2-m_1-m_2}. $$
Since $G$ is a polynomial of $a^\pm$ and $V$, using the fact that $H^1(\R)\subset L^\infty (\R)$, Lemma \ref{l2li}, and Lemma \ref{bilinear}, one has 
\begin{equation*}
\Vert  D_V^{m_1} \p_{a^\pm}^{m_2} \p_{y}^{m_3} \wt{G}(x^{cs},a^+)\Vert _{\mathcal{L}^{m_1} (S_{[t_0,t_0+T]}^c, L^1_{[t_0,t_0+t]}H_x^1)} \le CT^{\frac 12}(1+T^2) (\delta+a)^{2-m_1-m_2}.
\end{equation*}

To estimate $D_V^{m_4} \p_{a^\pm, y}^{m_5} A^{cs} \big(y, \wt G^T(x^{cs}, a^+)\big)$ for $m_4+m_5>0$, we first consider 
\[
\p_y^l (JL_{c, y}) V   =k  \p_x \big( \p_x^l (Q_c^{k-1}) (\cdot + y) V\big).
\]
Much as in the proof of \eqref{ii1}, we obtain, 
\[
\Vert \p_y^l (JL_{c, y}) (\bar y_1, \ldots, \bar y_l) V \Vert_{L^1_{[t_0,t_0+T]}H_x^1} \le C T^{\frac 12} (1 + T)  \Vert V \Vert_{ST_{[t_0, t_0+T]}^c}. 
\]
Due to high regularity of the eigenfunctions of $JL_{c, y}$, the smoothness of $\Pi_{c, y}^\perp$ and $\CF$ with respect to $y$, and the smoothness of  $\wt G^T$ with respect to $x^{cs} \in X^{cs}$ and $a^+ \in \R$, The inequality on $D_V^{m_4} \p_{a^\pm, y}^{m_5}  A^{cs}$ follows immediately and this completes the proof.
\end{proof}

In the rest of this subsection, we shall solve and estimate solutions to \eqref{E:cut-cs} with a given $a^+(t)$. One first observes that the $V$ equation has to be solved along a path $y(t)$ and the multiplier in front of $\CF$ has to be its $\p_t y$ in order maintain the commutativity (obtained in Lemma \ref{sh1}) between its homogeneous evolution operator $S(t, t_0)$ and $\Pi_{c, y}^e$. Therefore we split the iteration procedure of \eqref{E:cut-cs} into 
\begin{subequations} \label{E:cut-split}
\begin{equation} \label{E:cut-cs-T}
\p_t y = \wt G^T ( \wt y, \wt a^-, \wt V, \wt a^+)
\end{equation}
\begin{equation}  \label{E:cut-cs--}
\p_t a^- = -\lambda_{c} a^- + \wt G^- (\wt y, \wt a^-, \wt V, \wt a^+)
\end{equation}
\begin{equation}  \label{E:cut-cs-V}
\p_t V= \big(\Pi_{c,y}^{e}JL_{c,y}^e \Pi_{c,y}^{e}+\p_t y \mathcal{F} (c, y) \big) V + \wt G^e ( \wt y, \wt a^-, \wt V, \wt a^+)
\end{equation}
\end{subequations}
where 
\begin{equation} \label{E:temp-2.4}
\wt x^{cs} = ( \wt y, \wt a^-, \wt V) \in X_{[t_0, t_0+T]}^{cs}(a), \quad a\in (0, 1),  \qquad \wt a^+ \in L_{[t_0, t_0+T]}^\infty
\end{equation}
are given. In particular, ones first solves the ODEs \eqref{E:cut-cs-T} and \eqref{E:cut-cs--} for $y(t)$ and $\wt a^-(t)$ and then substitutes the solution $y(t)$ into the homogeneous part of equation \eqref{E:cut-cs-V} and solves for $V(t)$. 

\begin{lemma}\label{aprior3}
Let $\wt x_i^{cs}(t)$ and $\wt a_i^+(t)$ satisfy \eqref{E:temp-2.4}, $i=1,2$, and $x_i^{cs}(t)=\big(y_i(t),a_i^-(t), V_i(t)\big)$ be the solutions to \eqref{E:cut-split} for $t\in [t_0, t_0+T]$, then there exist a constant $C$ not depending on $t_0, T, x^{cs} (t_0)$, and $\wt x^{cs}$ such that, if initial value \[x_i^{cs}(t_0)=x_{i0}^{cs}=\big(y_{i0},a_{i0}^-, V_{i0}\big)\in X^{cs}(C\delta),\] then we have $\Vert \p_t y_i \Vert_{L^\infty} \le C \delta$ and 
\[
|y_i(t_0+T)| \le |y_{i0}| + C \delta T, \quad |a_i^-(t_0+T)| \le e^{-\lambda_c T} |a_{i0}^-| + C\delta
\]
\[
\Vert V_i\Vert_{ST^{c}_{[t_0,t_0+T]}} \le  C(1+T^6)e^{C\delta T}\big(\Vert  V_{i0} \Vert_{H^1}+ T^{\frac 12} (a^2+ \delta^2) \big),
\]
\begin{align*}
|(y_2 -y_1)(t_0+T)| 
\le& |y_{20} - y_{10}| + C T (\delta + A^{-1}) (\Vert \wt x_2^{cs} - \wt x_1^{cs}\Vert_{ST_{[t_0, t_0+T]}^c, A} + \Vert \wt a_2^+ - \wt a_1^+\Vert_{L_t^\infty})
\end{align*}
\begin{align*}
|(a_2^- - a_1^-) (t_0+T)| \le & e^{-\lambda_c (t-t_0)} |a_{20}^- - a_{10}^-| + C \delta \int_{t_0}^{t_0+T} e^{-\lambda_c (t-\tau)} (|\wt y_2 - \wt y_1|\\
&\qquad \qquad + |\wt a_2^- -\wt a_1^-| + |\wt V_2 - \wt V_1|_{H^1} + |\wt a_2^+ -\wt a_1^+|) (\tau) d\tau
\end{align*}

\begin{align*}
\Vert V_2 - & V_1  \Vert_{ST^{c}_{[t_0,t_0+T]}} 
\le  C(1+T^9)e^{C\delta T}\Big(\Vert  V_{20} - V_{10} \Vert_{H^1}+ T^{\frac 12} \big( a+ \delta \\
&+ T^{\frac 12} (a^2+ \delta^2)\big)  
(|y_{20} - y_{10}|+ \Vert \wt x_2^{cs} - \wt x_1^{cs}\Vert_{ST_{[t_0, t_0+T]}, A} + \Vert \wt a_2^+ - \wt a_1^+\Vert_{L_t^\infty}) \Big),
\end{align*}
\begin{equation*}
\begin{split}
\Vert  V_2(t_0&+T)  -  V_1 (t_0+T)  \Vert_{\wt{H}_{y_2(t_0+T)}^1}\le  e^{C\delta T}\big(\Vert V_{20}- V_{10} \Vert_{\wt{H}_{y_2(t_0)}^1} + C T^{\frac 12} (1+T^9) \\
&\times \big( a+ \delta + T^{\frac 12} (a^2+ \delta^2)\big) (|y_{20} - y_{10}|+ \Vert \wt x_2^{cs} - \wt x_1^{cs}\Vert_{ST_{[t_0, t_0+T]}, A} + \Vert \wt a_2^+ - \wt a_1^+\Vert_{L_t^\infty}) \Big),
\end{split}
\end{equation*}
and for any $l>0$
\begin{equation*}\begin{split}
\Vert \big( (I-\Pi_{c, y_2}^e) ( V_2  - V_1) \big)  (t_0+T) &\Vert_{H^l} \le  e^{C\delta T}\big( \Vert (I-\Pi_{c, y_2(t_0)}^e)(V_{02}  - V_{01})  \Vert_{H^1} \\
& + C T^{\frac 12} (1+T^4) \big( a+ \delta + T^{\frac 12} (a^2+ \delta^2)\big) (|y_{20} - y_{10}|\\
&+ \Vert \wt x_2^{cs} - \wt x_1^{cs}\Vert_{ST_{[t_0, t_0+T]}, A} + \Vert \wt a_2^+ - \wt a_1^+\Vert_{L_t^\infty}) \Big).
\end{split}\end{equation*}
\end{lemma}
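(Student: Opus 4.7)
The plan is to decouple the system into first the scalar ODEs \eqref{E:cut-cs-T}, \eqref{E:cut-cs--}, and then the linear PDE \eqref{E:cut-cs-V}. For fixed $\widetilde{x}^{cs}$, $\widetilde{a}^+$, equations \eqref{E:cut-cs-T}, \eqref{E:cut-cs--} are ODEs in $\R$ whose right-hand sides are globally bounded by the cut-off. From \eqref{lipG} we have $|\wt{G}^T|\le C\delta$ and $|\wt{G}^-|\le C\delta^2$, which, when integrated in $t$, immediately give $\Vert \p_t y_i\Vert_{L^\infty}\le C\delta$ together with the stated bounds on $|y_i(t_0+T)|$ and $|a_i^-(t_0+T)|$ (the latter using the contraction $e^{-\lambda_c T}$ from Duhamel's formula for the $-\lambda_c a^-$ part). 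In particular this justifies applying the linear theory of Section \ref{LinearAnalysis} with $\sigma = C\delta\le 1$.

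Next I would feed $y_i(t)$ back into \eqref{E:cut-cs-V} and treat it as a non-homogeneous equation of the form \eqref{eqve3}. Proposition \ref{leve} yields
\[
\Vert V_i\Vert_{ST^c_{[t_0,t_0+T]}} \le C(1+T^4)e^{C\delta T}\Vert V_{i0}\Vert_{H^1} + C\int_{t_0}^{t_0+T}(1+(t_0+T-s)^4)e^{C\delta(t_0+T-s)}\Vert \wt{G}^e\Vert_{H^1_x}\,ds,
\]
and then Lemma \ref{lipGe} bounds the nonhomogeneous $L^1_tH^1_x$-norm by $CT^{1/2}(1+T^2)(a^2+\delta^2)$, giving the first $\Vert V_i\Vert_{ST^c}$ estimate after combining the polynomial prefactors.

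For the difference estimates, I would subtract the equations for $V_2$ and $V_1$. The homogeneous evolution is along $y_2$, and the discrepancy between the two linearizations plus the difference $\wt{G}^e_2 - \wt{G}^e_1$ are absorbed by Lemma \ref{diff-hom}. The Lipschitz constant of $\wt{G}^e$ in $(x^{cs},a^+)$, obtained from \eqref{lipG} and Lemma \ref{lipGe}, is of order $T^{1/2}(1+T^2)(a+\delta)$, while the $y$-path discrepancy in Lemma \ref{diff-hom} contributes an extra factor $T^{1/2}(1+T^5)$ times $\Vert y_2-y_1\Vert_{C^{0,1}}$ (with $\Vert V_{01}\Vert_{H^1} + \Vert f_1\Vert_{L^1_tH^1_x}$ bounded already by the first part, giving the $(a^2+\delta^2)$ term). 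The scalar difference estimates for $y_2-y_1$ and $a_2^- - a_1^-$ are obtained by direct integration of \eqref{E:cut-cs-T}, \eqref{E:cut-cs--} with the Lipschitz bounds in \eqref{lipG}; the weight $A^{-1}$ appears because $\wt{G}^T$ has $V$-Lipschitz constant only of order $1$, so one pays a factor $A^{-1}$ when switching to the $\Vert\cdot\Vert_{ST^c,A}$-metric. The $\wt{H}^1_{y_2(t_0+T)}$-estimate follows from the sharp inequality \eqref{E:temp-1.4} of Proposition \ref{leve}, and the $(I-\Pi^e_{c,y_2})$-estimate from the last inequality of Lemma \ref{diff-hom}, exploiting that $X^\perp_{c,y}$ is a finite-dimensional space of smooth functions.

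The main obstacle I anticipate is ensuring that the coefficient in front of $e^{C\delta T}$ remains $1$ (not a growing polynomial in $T$). This is essential so that the later Lyapunov--Perron iteration works at a size that is only weakly exponentially growing. This is precisely what the Hamiltonian-based energy identity in Lemma \ref{sh1} provides, where the prefactor of $e^{C\sigma|t-s|}$ is exactly $1$, via the positivity of $\langle L_{c,y}\cdot,\cdot\rangle$ on $X^e_{c,y}$ and the $L_{c,y}$-orthogonality of the decomposition. The smoothing space-time bounds with growing polynomial prefactors $(1+T^k)$ appear only in the $ST^c$-type inequalities and are harmless in that context. Carefully tracking through the contributions of $\wt{G}^e$, the $y$-perturbation of the linear propagator, and the discrepancy $\CF(c,y_1)-\CF(c,y_2)$ then yields all the stated bounds.
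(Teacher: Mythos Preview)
Your proposal is correct and follows essentially the same route as the paper: first handle the scalar ODEs \eqref{E:cut-cs-T}, \eqref{E:cut-cs--} via \eqref{lipG} and Duhamel, then apply Proposition \ref{leve} together with Lemma \ref{lipGe} to \eqref{E:cut-cs-V}, and finally use Lemma \ref{diff-hom} for the $V_2-V_1$ differences after bounding $\Vert y_2-y_1\Vert_{C^{0,1}}$ and $\Vert \wt G^e_2-\wt G^e_1\Vert_{L^1_tH^1_x}$. Your explanation of why the $A^{-1}$ factor appears in the $y$-difference and your emphasis on the coefficient $1$ in the $\wt H^1_{y_2}$-estimate (inherited from Lemma \ref{sh1} through Lemma \ref{diff-hom}) are exactly the points the paper relies on.
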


\begin{proof}
From \eqref{E:cut-split}, \eqref{lipG}, Proposition \ref{leve}, and Lemma \ref{lipGe}, it is straightforward to obtain the estimates on $\wt x^{cs}_i$ and compute 
\begin{align*}
|(y_2 -y_1)(t)| \le |y_{20} - y_{10}| + C \int_{t_0}^{t} (&\delta |\wt y_2 - \wt y_1| + \delta |\wt a_2^- -\wt a_1^-| \\
&+ |\wt V_2 - \wt V_1|_{H^1} + \delta |\wt a_2^+ -\wt a_1^+|) (\tau) d\tau. 
\end{align*}
Therefore the estimate on $y_2 - y_1$ follows immediately from the definition of $\Vert \cdot \Vert_{ST_{[t_0, t_0+T]}^c, A}$. The inequality on $a_2^- - a_1^-$ is also derived from \eqref{E:cut-split} and \eqref{lipG}. To estimate the difference in $V_i(t)$, we first obtain from  Lemma \ref{lipGe} 
\[
|\p_t y_i|_{L_t^\infty} \le C\delta, \qquad \Vert \wt G^e (\wt x_i^{cs}, \wt a_i^+) \Vert_{L_t^1 H_x^1} \le CT^{\frac 12} (1+T^2) (a^2+\delta^2), 
\]
for $i=1,2$ where the the norms in $t$ are taken on $[t_0, t_0+T]$ throughout the lemma. Using Lemma \ref{diff-hom}, we have 
\begin{align*} 
\Vert V_2& - V_1 \Vert_{ST^{c}_{[t_0,t_0+T]}}  \le  C(1+T^7)e^{C\delta T}\Big(\Vert V_{02} - V_{01} \Vert_{H^1} \\
&+ \Vert \wt G^e (\wt x_2^{cs}, \wt a_2^+) - \wt G^e (\wt x_1^{cs}, \wt a_1^+) \Vert_{L_t^1 H_x^1} + T^{\frac 12} \big(\delta + T^{\frac 12} (a^2+ \delta^2)\big) \Vert y_2 - y_1\Vert_{C^{0,1}}\Big).
\end{align*}

Again from Lemma \ref{lipGe}, \eqref{E:cut-split},  \eqref{lipG} and the inequality on $y_2 - y_1$, we have
\begin{align*}
& \Vert \wt G^e (\wt x_2^{cs}, \wt a_2^+) - \wt G^e (\wt x_1^{cs}, \wt a_1^+) \Vert_{L_t^1 H_x^1} \le CT^{\frac 12} (1+T^2) (a+\delta) (\Vert \wt x_2^{cs} - \wt x_1^{cs}\Vert_{X_{[t_0, t_0+T]}^{cs}} + \Vert \wt a_2^+ - \wt a_1^+\Vert_{L_t^\infty})\\
 &\Vert y_2 - y_1\Vert_{C^{0,1}} \le  |y_{20} - y_{10}| + C (1+T) (\delta + A^{-1}) (\Vert \wt x_2^{cs} - \wt x_1^{cs}\Vert_{ST_{[t_0, t_0+T]}^c, A} + \Vert \wt a_2^+ - \wt a_1^+\Vert_{L_t^\infty}).
\end{align*}
The last inequality follows similarly and the proof is complete. 
\end{proof}

\begin{remark} \label{R:apriori3}
Clearly \eqref{E:cut-split} is well-posed and $x^{cs} =(y_0, 0, 0)$ if $\wt x^{cs}\equiv \wt x^{cs}(t_0) = (y_0, 0,0)$ and $\wt a^+ =0$. 
\end{remark}

With the above lemma, we are ready to prove the well-posedness of \eqref{E:cut-cs}. 

\begin{lemma} \label{L:WP1}
Given any $C_0>1$, there exists $C>1$ such that if $A$ and $\delta$ satisfy \eqref{E:parameter-1} and 
\begin{equation} \label{E:parameter-2}
C\eta <1, \quad \text{ where } \quad \eta= A\delta + A^{-1}
\end{equation} 
then 
\begin{enumerate}
\item For any $a^+ \in L_{loc}^\infty$ and $x_0^{cs} = (y_0, a_0^-, V_0)  \in X^{cs}(C_0\delta)$, there exists a unique solution $x^{cs} = (y, a^-, V) \in C^0 \big([0, \infty), X^{cs}(C\delta)\big)$ of \eqref{E:cut-cs} such that $x^{cs} (0) = x_0^{cs}$ and $x^{cs}  \in X_{[t_0, t_0+T]}^{cs} \big(C(1+T)\delta\big)$ for any $t_0, T\ge 0$. 
\item Let $x_i^{cs} = (y_i, a_i^-, V_i)$ be the solutions of \eqref{E:cut-cs} with initial values $x_{i0}^{cs} = (y_{i0}, a_{i0}^-, V_{i0})  \in X^{cs}(C\delta)$ corresponding to $a_i^+ \in L_{loc}^\infty$, $i=1,2$. Suppose 
\begin{equation}\label{a+12}
|a^{+}_{1}-a^{+}_{2}|\le \kappa_0 + \kappa_1 \Vert x_1^{cs}-x_2^{cs}\Vert_{H^{1}, A},
\end{equation}
for $\kappa_0>0$ and $\kappa_1 \in [0, 1]$, then  
\begin{align*}
&\Vert x^{cs}_1(t)-  x^{cs}_2(t)\Vert_{H^1,A}\le Ce^{ C\eta t}\left( \Vert x^{cs}_{10}- x^{cs}_{20}\Vert_{H^1,A}+ 
 \min \{1, (1+t)\eta)\}\kappa_0  \right), \\
& \Vert x_2^{cs} - x_1^{cs} \Vert_{ST^{c}_{[t_0, t_0+T]}, A} \le  C (1+T) e^{C \eta T}  \big(  \Vert x_{2}^{cs} (t_0) - x_{1}^{cs} (t_0) \Vert_{H^1, A}  +  \kappa_0\big).
\end{align*}
\end{enumerate}
\end{lemma}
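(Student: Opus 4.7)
The approach is a Picard iteration based on the splitting \eqref{E:cut-split} of \eqref{E:cut-cs}, with the a priori and difference estimates in Lemma \ref{aprior3} providing both invariance of a ball and contraction in the right norm. More precisely, for a given $a^+ \in L^\infty_{[t_0, t_0+T]}$, I would define the map $\Psi$ sending a tuple $\wt x^{cs} = (\wt y, \wt a^-, \wt V) \in X^{cs}_{[t_0, t_0+T]}(a)$ (with $\wt a^+ := a^+$) to the solution $x^{cs}$ of \eqref{E:cut-split} with initial value $x_0^{cs}$. Lemma \ref{aprior3} is precisely tailored to bound $x^{cs}$ and $\Psi(\wt x_2^{cs}) - \Psi(\wt x_1^{cs})$ in the $ST^c_{[t_0,t_0+T]}$ norm, so the whole construction reduces to iterating these inequalities.

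For part (1), I would fix a time length $T_0 = O(1)$ chosen small enough (depending on $A, \delta$) that the prefactors of the form $T_0^{1/2}(1 + T_0^9) e^{C\delta T_0}$ appearing in Lemma \ref{aprior3} are bounded by a small constant after being paired with $(a + \delta)$. Given $x_0^{cs} \in X^{cs}(C_0\delta)$, I set $a = C\delta$ for a sufficiently large $C > C_0$ and verify that $\Psi$ sends the ball $X^{cs}_{[t_0,t_0+T_0]}(a)$ with initial value $x_0^{cs}$ to itself, and that it is a contraction in $\Vert \cdot \Vert_{ST^c_{[t_0,t_0+T_0]},A}$ (the $A$ in the norm compensates precisely for the fact that the off-diagonal linear term in the $y$-equation produces a $\Vert V\Vert_{H^1}$ contribution, which is multiplied by $A^{-1}$ when converted to the weighted norm; this is the source of the $A^{-1}$ in $\eta$). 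The unique fixed point is the solution on $[t_0, t_0+T_0]$, and Remark \ref{R:apriori3} gives uniqueness. To extend globally, I iterate on successive intervals of length $T_0$: Lemma \ref{aprior3} shows that $\Vert V(t_0+T_0)\Vert_{\wt H^1_{y(t_0+T_0)}}$ and $|a^-(t_0+T_0)|$ stay bounded by $C\delta$ (the $e^{C\delta T_0}$ factor is a harmless $O(1)$ constant, and $a^-$ even contracts by $e^{-\lambda_c T_0}$), while $|y(t_0+T_0)|$ grows by at most $C\delta T_0$. Thus after iterating $\lceil T/T_0 \rceil$ times, the solution lies in $X^{cs}_{[t_0,t_0+T]}(C(1+T)\delta)$.

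For part (2), I would apply the difference estimates of Lemma \ref{aprior3} with $\wt x_i^{cs} = x_i^{cs}$ and $\wt a_i^+ = a_i^+$. On a single interval $[t_0, t_0+T_0]$ the lemma yields, schematically,
\begin{equation*}
\Vert x_2^{cs} - x_1^{cs}\Vert_{ST^c_{[t_0, t_0+T_0]}, A} \le K \bigl( \Vert x_2^{cs}(t_0) - x_1^{cs}(t_0)\Vert_{H^1,A} + \eta \Vert x_2^{cs}-x_1^{cs}\Vert_{ST^c_{[t_0, t_0+T_0]}, A} + \Vert a_2^+ - a_1^+\Vert_{L^\infty}\bigr),
\end{equation*}
where the middle term comes from combining the $T_0^{1/2}(a+\delta)$-type prefactors with the $\Vert \cdot \Vert_{ST^c, A}$ component on the right, which gives an $A\delta$ factor, and the $A^{-1}$ comes as described above. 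Substituting \eqref{a+12} with $\kappa_1 \le 1$ yields an extra $\kappa_1 \Vert x_2^{cs} - x_1^{cs}\Vert_{H^1, A}$ that is absorbed by choosing $T_0$ small; what remains is a bound of the form $(1 + C\eta)\Vert x_2^{cs}(t_0) - x_1^{cs}(t_0)\Vert_{H^1, A} + C\kappa_0$ at $t_0+T_0$. Iterating this relation discretely over $\lceil t/T_0 \rceil$ intervals and summing the geometric series in $\kappa_0$ yields
\begin{equation*}
\Vert x^{cs}_1(t) - x^{cs}_2(t)\Vert_{H^1, A} \le C e^{C\eta t}\bigl(\Vert x_{10}^{cs} - x_{20}^{cs}\Vert_{H^1, A} + \min\{1, (1+t)\eta\}\kappa_0\bigr),
\end{equation*}
where the $\min\{1, (1+t)\eta\}$ factor arises because the accumulated $\kappa_0$ contributions either sum linearly (before saturating, giving $(1+t)\eta \kappa_0$) or are absorbed into the $e^{C\eta t}$ prefactor (giving $\kappa_0$). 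The $ST^c$ estimate on $[t_0, t_0+T]$ follows from one more application of Lemma \ref{aprior3} using the just-obtained pointwise bound at $t_0$.

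The main obstacle is ensuring that the exponential growth rate is exactly $C\eta = C(A\delta + A^{-1})$ rather than some cruder bound. This requires carefully exploiting two features: first, the coefficient in front of $e^{C\sigma|t-s|}$ in the energy/smoothing estimates of Lemma \ref{sh1} and Proposition \ref{leve} is \emph{exactly} $1$ (the Hamiltonian positivity of $\langle L_{c,y}\cdot,\cdot\rangle$ on $X_{c,y}^e$ is essential here), so iterating over $\lceil t/T_0\rceil$ intervals multiplies these factors without introducing a spurious geometric blow-up; and second, the cancellation between the $A$-weight on $V$ in $\Vert \cdot \Vert_{H^1, A}$ and the $A^{-1}$ cost in translating cross-coupling terms between $V$ and $(y, a^-)$ must be tracked explicitly to produce $\eta = A\delta + A^{-1}$ rather than a weaker estimate. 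All other terms generate at most polynomial prefactors $(1+T_0)^k$ that are absorbed into the $O(1)$ constant $C$ by fixing $T_0 = O(1)$.
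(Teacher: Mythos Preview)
Your approach matches the paper's for the local existence (Picard iteration via the splitting \eqref{E:cut-split} and Lemma \ref{aprior3}) and for part (2) (iterating a one-step estimate that grows like $1+C\eta$ per unit time; the paper does this through the auxiliary quantity $l(t)$ built from the $\wt H^1_{y}$ norm, which is exactly the mechanism you identify in your final paragraph). So the Lipschitz estimates are fine.

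The genuine gap is in your global extension for part (1). You claim that iterating on intervals of length $T_0$ keeps $\|V(t_0+T_0)\|_{\wt H^1_{y(t_0+T_0)}}$ bounded by $C\delta$, but Lemma \ref{aprior3} gives no such estimate on a single solution $V_i$ in the $\wt H^1$ norm; it only bounds $\|V_i\|_{ST^c}$ and differences $\|V_2-V_1\|_{\wt H^1}$. Even if you invoke Proposition \ref{leve} directly, the best you get on $[t_0,t_0+1]$ is $\|V^e(t_0+1)\|_{\wt H^1} \le e^{C\delta}\|V^e(t_0)\|_{\wt H^1} + C\delta^2$, and iterating this yields $\|V^e(n)\|_{\wt H^1} \lesssim e^{Cn\delta}\delta$, which is not uniformly bounded. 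The constant in front being $1$ helps with \emph{differences}, but for a single solution the $e^{C\delta}$ factor per step still accumulates.

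The paper closes this with a structural observation about the cut-off that you are missing: whenever $\|V\|_{H^1} > 3\delta$, the factor $\gamma_\delta$ vanishes, so simultaneously $\wt G^e=0$ and $\partial_t y = \wt G^T = 0$. At that point equations \eqref{vevp1}--\eqref{vevp2} become $\partial_t\langle L_{c,y}V^e,V^e\rangle=0$ and $\partial_t V^\perp=0$, i.e.\ the energy is \emph{exactly} conserved (no $e^{C\delta}$ growth at all). This barrier argument, together with the equivalence of $\wt H^1_y$ and $H^1$, forces $\|V(t)\|_{H^1}\le C\delta$ for all $t$ outright, after which the continuation and the $ST^c$ bound on each unit interval follow. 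Insert this cut-off observation in place of your iteration for the uniform $C\delta$ bound, and the rest of your argument goes through.
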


\begin{proof}
In the proof of this lemma, we will use $C'$ to denote the generic upper bounds appearing in previous estimates and $C$ the newer (and greater) bound emerging in the proof of this lemma. For any $\wt x^{cs} \in X_{[0, 1]}^{cs} (C C_0 \delta )$, let $x^{cs}= (y, a^-, V)$ be the solution to \eqref{E:cut-split} with $\wt a^+=a^+$ and the  initial value $x_0^{cs}$. From Lemma \ref{aprior3} and \eqref{E:parameter-1}, we have 
\[
\Vert V \Vert_{ST^{c}_{[0, 1]}}
\le C' \big(C_0 \delta+ C^2 C_0^2 \delta^2) \big) \le CC_0 \delta.
\]

Therefore $x^{cs} \in X_{[0, 1]}^{cs} (CC_0 \delta)$. Moreover, the mapping $\wt x^{cs} \to x^{cs}$ has the Lipschitz constant $C' \eta<1$ in the $\Vert \cdot \Vert_{ST_{[0, 1]}^c, A}$ norm
due to \eqref{E:parameter-2}. The Contraction Mapping principle implies the local well-posedness of \eqref{E:cut-cs} for $t \in [0,1]$. 

Note that when $\Vert V\Vert _{H^1}>3\delta$,  we have $\partial_{t}y=0$ and $\widetilde{G}^{e}=0$ in \eqref{E:cut-cs}. Consequently, by \eqref{vevp1} and \eqref{vevp2}, 
$$\p_t\langle L_{c,y}V^{e},V^{e}\rangle=0,\qquad\qquad \p_t V^\perp=0,$$ 
where $V^e=\Pi_{c,y}^e V$ and $V^\perp=\Pi_{c,y}^\perp V$, which along with the positivity of $L_{c, y}$ on $X_{c, y}^e$ implies 
\begin{equation}\label{bov}
 \Vert V\Vert _{H^{1}}  \le C\delta
\end{equation}
for any $t\in [0,1]$. Therefore a standard continuation argument yields the global in time well-posedness of \eqref{E:cut-cs} with $x^{cs} \in X_{[t_0,t_0+T]}^{cs} \big(C(1+T)\delta\big)$ for any $t_0, T\ge 0$. 

To prove the second part of the lemma, we first notice that Lemma \ref{aprior3} implies that  
\begin{align*}
\Vert x_2^{cs} - x_1^{cs} \Vert_{ST^{c}_{[t_0, t_0+1]}, A} \le  C'  \big(&  \Vert x_{2}^{cs} (t_0) - x_{1}^{cs} (t_0) \Vert_{H^1, A} \\
& + ( CC_0 A\delta +A^{-1} ) \big( \Vert x_2^{cs} - x_1^{cs} \Vert_{ST^{c}_{[t_0, t_0+1]}, A} +\kappa_0\big) \big)
\end{align*}
where $x_i^{cs} \in X_{[t_0,t_0+1]}^{cs} (CC_0\delta)$ is used. From \eqref{E:parameter-1}, we obtain 
\begin{equation} \label{E:temp-2.8}
\Vert x_2^{cs} - x_1^{cs} \Vert_{ST^{c}_{[t_0, t_0+1]}, A} \le  C \big(  \Vert x_{2}^{cs} (t_0) - x_{1}^{cs} (t_0) \Vert_{H^1, A}  + \eta \kappa_0\big).
\end{equation}

Let 
\[
l(t) = \Big(|y_2 - y_1| + |a_2^- - a_1^-| + A\Vert V_2  - V_1  \Vert_{\wt H_{y_2}^1} + A\Vert \big( (I-\Pi_{c, y_2}^e) ( V_2  - V_1) \big) \Vert_{H^1} \Big)\Big|_t, 
\]
which satisfies 
\[
(1/C') l(t) \le \Vert x_2^{cs} (t) - x_1^{cs} (t) \Vert_{H^1, A} \le C' l(t). 
\]
Substituting \eqref{E:temp-2.8} into Lemma \ref{aprior3} yields, for $t\in [0, 1]$, 
\begin{equation} \label{E:temp-3} 
l(t_0+t) \le e^{C'\delta t}  \big( l(t_0) + C\eta t^{\frac 12}  (l(t_0) + \kappa_0)\big). 
\end{equation}
In particular it implies 
\[
l(n+1) \le e^{C\eta}  \big( l(n) + C \eta \kappa_0\big).
\]
From a simple induction argument we obtain
\[
l(n) \le e^{C\eta n}  \big( l(0) + C 
\min\{1, n \eta\}\kappa_0\big)
\]
which along with \eqref{E:temp-3} implies, for $t\ge 0$, 
\[
l(t) \le 2 e^{C\eta t}  \big( l(0) + C \min\{1,  (1+t) \eta\} 
 \kappa_0  \big). 
\]
Therefore we obtain the desired estimate on $\Vert x_1^{cs} - x_2^{cs}\Vert_{H^1, A}$. Finally, substituting this into \eqref{E:temp-2.8}, we obtain 
\[
\Vert x_1^{cs} - x_2^{cs}\Vert_{ST_{[n, n+1]}^c, A} \le C e^{C\eta n} ( \Vert  x_{10}^{cs} -  x_{20}^{cs} \Vert_{H^1, A} + \kappa_0),
\]
the summation of which implies the estimates $\Vert x_1^{cs} - x_2^{cs}\Vert_{ST_{[t_1, t_2]}^c, A}$. 
\end{proof}

\subsection{Construction of local center-stable manifolds}
In this section, we follow the procedure described in Section \ref{outline} to construct center-stable manifolds of $\CM_c$. The goal is to show the transformation $h\rightarrow \wt{h}$ is a contraction on $\gamma_{\mu,\delta}$, where  $\wt{h}$  and $\gamma_{\mu,\delta}$ are defined in \eqref{newh} and \eqref{E:Gamma-mu}, respectively. We first give the global well-posedness of \eqref{lp} in the below.

\begin{lemma}\label{WP}
There exists $C>1$ such that if $A$, $\mu$, and $\delta$ satisfy \eqref{E:parameter-1} and \eqref{E:parameter-2}, then, for any  $x_0^{cs} = (y_0, a_0^-, V_0)  \in X^{cs}(\delta)$, there exists a unique solution $x^{cs} = (y, a^-, V) \in C^0 \big([0, \infty), X^{cs}(C\delta)\big)$ of \eqref{lp} such that $x^{cs}(0) = x_0^{cs}$ and $x^{cs}  \in X_{[t_0, t_0+T]}^{cs}$ for any $t_0, T\ge 0$. 
\end{lemma}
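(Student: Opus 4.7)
The plan is to recast \eqref{lp} as a self-referential instance of \eqref{E:cut-cs} with $a^+(t) = h(x^{cs}(t))$, and then resolve it by a Banach fixed point argument based on Lemma \ref{L:WP1}. Fix $\bar x^{cs} \in X^{cs}(\delta)$, and on a suitable closed ball $\mathcal{B} \subset X^{cs}_{[0,1]}(C\delta)$ equipped with the metric induced by $\Vert \cdot \Vert_{ST^c_{[0,1]}, A}$, define a map $\Phi$ as follows: for $\wt x^{cs} \in \mathcal{B}$, set $\wt a^+(t) = h(\wt x^{cs}(t))$, which belongs to $L^\infty_{[0,1]}$ with $\Vert \wt a^+ \Vert_{L^\infty} \le \delta$ because $\Vert h \Vert_{C^0} \le \delta$. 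By Lemma \ref{L:WP1}(1), \eqref{E:cut-cs} with this $\wt a^+$ and initial datum $\bar x^{cs}$ admits a unique solution $x^{cs} \in X^{cs}_{[0,1]}(C\delta)$; set $\Phi(\wt x^{cs}) = x^{cs}$. Choosing the radius of $\mathcal{B}$ appropriately (using the quadratic nature of $\wt G^{cs}$ and the small-time estimates of Lemma \ref{aprior3}) makes $\Phi$ an endomorphism of $\mathcal{B}$.

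To verify $\Phi$ is a contraction, take $\wt x^{cs}_1, \wt x^{cs}_2 \in \mathcal{B}$ and let $x^{cs}_i = \Phi(\wt x^{cs}_i)$. The Lipschitz property of $h$ gives
\[
\Vert \wt a^+_1 - \wt a^+_2 \Vert_{L^\infty_{[0,1]}} \le \mu \sup_{t \in [0,1]} \Vert \wt x^{cs}_1(t) - \wt x^{cs}_2(t) \Vert_{H^1, A} \le \mu \Vert \wt x^{cs}_1 - \wt x^{cs}_2 \Vert_{ST^c_{[0,1]}, A}.
\]
Apply Lemma \ref{L:WP1}(2) with $\kappa_0 = \mu \Vert \wt x^{cs}_1 - \wt x^{cs}_2 \Vert_{ST^c_{[0,1]}, A}$, $\kappa_1 = 0$, and identical initial data $\bar x^{cs}$. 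The first estimate there, specialised to $t \in [0,1]$, yields
\[
\sup_{t \in [0,1]} \Vert x^{cs}_1(t) - x^{cs}_2(t) \Vert_{H^1, A} \le 2Ce^{C\eta}\,\eta\,\mu\,\Vert \wt x^{cs}_1 - \wt x^{cs}_2 \Vert_{ST^c_{[0,1]}, A},
\]
and the corresponding control of the space-time components of $V_1 - V_2$ in $ST^c_{[0,1]}$ can be recovered by inserting the above bound back into the smoothing estimates of Lemma \ref{aprior3}. Choosing $\eta$ sufficiently small in \eqref{E:parameter-2}, the resulting contraction constant is strictly less than $1$, and the unique fixed point of $\Phi$ solves \eqref{lp} on $[0,1]$.

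For the global extension to $[0, \infty)$, we follow the mechanism already used in Lemma \ref{L:WP1}: whenever $\Vert V(t) \Vert_{H^1} > 3\delta$, the cutoff $\gamma_\delta$ vanishes, so $\wt G^T, \wt G^\pm, \wt G^e$ all vanish and \eqref{lp} collapses to $\p_t y = 0$, $\p_t a^- = -\lambda_c a^-$, and $\p_t V = \Pi^e_{c,y} J L_{c,y} \Pi^e_{c,y} V$. By Lemma \ref{sh1}, this linear flow preserves both $\langle L_{c,y} \Pi^e_{c,y} V, \Pi^e_{c,y} V \rangle$ and $\Pi^\perp_{c,y} V$; combined with the uniform positivity of $L_{c,y}$ on $X^e_{c,y}$ this furnishes the a priori bound $\Vert V(t) \Vert_{H^1} \le C \delta$ for all $t \ge 0$. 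Iterating the local construction on $[n, n+1]$ for $n \ge 0$ then yields a global solution in $C^0([0, \infty), X^{cs}(C\delta))$, and uniqueness follows by applying Lemma \ref{L:WP1}(2) to any two such solutions with $\kappa_0 = 0$, $\kappa_1 = \mu$. The main technical obstacle I anticipate is extracting the crucial small factor $\eta$ in the contraction estimate: one must use the \emph{first} (not the second) inequality of Lemma \ref{L:WP1}(2), which carries the factor $\min\{1, (1+t)\eta\}$ in front of $\kappa_0$, to defeat the uncontrolled $O(1)$ constant $C$ so that the genuine smallness comes from $\eta$ rather than from $\mu$ alone.
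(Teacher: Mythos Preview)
Your proposal is correct and amounts to the same argument the paper intends. The paper's own proof is simply the remark that the contraction and global-extension arguments of Lemma~\ref{L:WP1} carry over verbatim once $a^+$ is replaced by $h(x^{cs})$; your scheme, which layers a second fixed point on top of Lemma~\ref{L:WP1} rather than rerunning the split iteration \eqref{E:cut-split} directly, is an equivalent reorganization, and the cutoff mechanism for global continuation is identical. Your closing remark about needing the factor $\min\{1,(1+t)\eta\}$ (equivalently, the intermediate estimate \eqref{E:temp-2.8}) rather than the cruder second inequality of Lemma~\ref{L:WP1}(2) is exactly the point that makes the contraction go through.
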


\begin{remark}
The global well-posedness of \eqref{lp} can be proved by the same arguments as in the proof Lemma \ref{L:WP1} and we omit it. The estimate in part (2) of Lemma \ref{L:WP1} obviously holds for solutions to \eqref{lp}.  
\end{remark}

Since \eqref{lp} is global well-posed, the definition  \eqref{newh} of $\wt{h}$ is valid. Next, we show that the map $\CT(h)=\wt{h}$ is a contraction on $\gamma_{\mu,\delta}$. 

\begin{proposition}\label{contraction}
There exists $C>1$ such that if $\delta$, $\mu$, and $A$ satisfy \eqref{E:parameter-1}, \eqref{E:parameter-2}, and 
\begin{equation}\label{parameter-set-2}
C (\lambda_c - C\eta )^{-1}\delta\le  \mu,
\end{equation}
then $\mathcal{T}$ is a contraction mapping on $\Gamma_{\mu,\delta}$.
\end{proposition}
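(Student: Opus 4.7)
\textbf{Proof proposal for Proposition \ref{contraction}.} My plan is to verify, under the parameter constraints \eqref{E:parameter-1}, \eqref{E:parameter-2}, \eqref{parameter-set-2}, the three claims: (i) $\wt{h}\in \Gamma_{\mu,\delta}$ whenever $h\in \Gamma_{\mu,\delta}$ (well-definedness and self-mapping), and (ii) $\CT$ is a contraction in the $C^0$ norm. The key linear bookkeeping tool is the smallness of $\wt G^+$ in \eqref{lipG} (quadratic in $(a^\pm, V)$, hence bounded by $C\delta^2$ and Lipschitz with constant $C\delta$ in $\Vert\cdot\Vert_{H^1,A}$ since $A>1$) combined with the trajectory estimates of Lemma \ref{L:WP1} whose growth rate $C\eta$ is strictly less than the decay rate $\lambda_c$ in \eqref{newh}.

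First, for self-mapping, the condition $\wt h(y,0,0)=0$ follows from Remark \ref{R:apriori3}: the solution starting at $(y,0,0)$ with $a^+(t)=h(y,0,0)=0$ stays at $(y,0,0)$, so $\wt G^+\equiv 0$. The $C^0$ bound $\Vert \wt h\Vert_{C^0}\le \delta$ is immediate from
\[
|\wt h(\bar x^{cs})|\le \int_0^\infty e^{-\lambda_c s}\,C\delta^2\,ds=\frac{C\delta^2}{\lambda_c}\le \delta
\]
for $\delta$ small. To obtain $\mathrm{Lip}(\wt h)_{\Vert\cdot\Vert_{H^1,A}}\le \mu$, I consider two solutions $x_1^{cs}(t), x_2^{cs}(t)$ of \eqref{lp} with the same $h$ and initial data $\bar x_1^{cs},\bar x_2^{cs}\in X^{cs}(\delta)$, and set $a_i^+(t)=h(x_i^{cs}(t))$. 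Since $h$ is $\mu$-Lipschitz in $\Vert\cdot\Vert_{H^1,A}$ and $\mu<1$, hypothesis \eqref{a+12} of Lemma \ref{L:WP1} holds with $\kappa_0=0, \kappa_1=\mu$, giving
\[
\Vert x_1^{cs}(t)-x_2^{cs}(t)\Vert_{H^1,A}\le C e^{C\eta t}\Vert \bar x_1^{cs}-\bar x_2^{cs}\Vert_{H^1,A}.
\]
Using the Lipschitz estimate \eqref{lipG} for $\wt G^+$ together with $|a_1^+-a_2^+|\le \mu \Vert x_1^{cs}-x_2^{cs}\Vert_{H^1,A}$ yields
\[
|\wt h(\bar x_1^{cs})-\wt h(\bar x_2^{cs})|\le C\delta\int_0^\infty e^{-(\lambda_c-C\eta)s}\,ds\,\Vert \bar x_1^{cs}-\bar x_2^{cs}\Vert_{H^1,A}\le \frac{C\delta}{\lambda_c-C\eta}\Vert \bar x_1^{cs}-\bar x_2^{cs}\Vert_{H^1,A},
\]
which is bounded by $\mu\Vert \bar x_1^{cs}-\bar x_2^{cs}\Vert_{H^1,A}$ thanks to \eqref{parameter-set-2}.

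For the contraction property, fix $h_1,h_2\in \Gamma_{\mu,\delta}$ and an initial datum $\bar x^{cs}\in X^{cs}(\delta)$, and let $x_i^{cs}(t)$ be the corresponding solutions of \eqref{lp} with $a_i^+(t)=h_i(x_i^{cs}(t))$. The decomposition
\[
|a_1^+(t)-a_2^+(t)|\le |h_1(x_1^{cs})-h_2(x_1^{cs})|+|h_2(x_1^{cs})-h_2(x_2^{cs})|\le \Vert h_1-h_2\Vert_{C^0}+\mu\Vert x_1^{cs}-x_2^{cs}\Vert_{H^1,A}
\]
places me in the setting of Lemma \ref{L:WP1}(2) with $\kappa_0=\Vert h_1-h_2\Vert_{C^0}$, $\kappa_1=\mu$ and identical initial data, so
\[
\Vert x_1^{cs}(t)-x_2^{cs}(t)\Vert_{H^1,A}\le C e^{C\eta t}\,\kappa_0.
\]
Applying \eqref{lipG} once more to bound $|\wt G^+(x_1^{cs},h_1(x_1^{cs}))-\wt G^+(x_2^{cs},h_2(x_2^{cs}))|\le C\delta e^{C\eta s}\kappa_0$ and integrating against $e^{-\lambda_c s}$ produces
\[
\Vert \CT h_1-\CT h_2\Vert_{C^0}\le \frac{C\delta}{\lambda_c-C\eta}\Vert h_1-h_2\Vert_{C^0}\le \mu\,\Vert h_1-h_2\Vert_{C^0}<\tfrac12\Vert h_1-h_2\Vert_{C^0},
\]
by \eqref{parameter-set-2} and $\mu<1/2$. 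The main (and only nontrivial) obstacle is arranging the parameters so that the exponential growth $C\eta$ from the cross terms in \eqref{E:cut-cs} (driven by $\wt G^T$, whose Lipschitz constant in $V$ is only controlled by $A^{-1}$, not by $\delta$) stays strictly below $\lambda_c$; this is precisely what \eqref{E:parameter-2} and \eqref{parameter-set-2} encode, after first choosing $A$ large and then $\delta$ small.
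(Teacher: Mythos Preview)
Your proposal is correct and follows essentially the same approach as the paper's proof: both use \eqref{lipG} for the $C^0$ bound and the vanishing at $(y,0,0)$, and both invoke Lemma \ref{L:WP1}(2) with suitable $\kappa_0,\kappa_1$ to control $\Vert x_1^{cs}(t)-x_2^{cs}(t)\Vert_{H^1,A}$ before integrating the $C\delta$-Lipschitz bound on $\wt G^+$ against $e^{-(\lambda_c-C\eta)s}$. The only organizational difference is that the paper runs the Lipschitz estimate (same $h$, different initial data) and the contraction estimate (different $h$, same initial data) as two specializations of a single combined inequality \eqref{diff-x12cs}, whereas you treat them as two separate applications of Lemma \ref{L:WP1}; this is immaterial.
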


\begin{proof}
By \eqref{lipG} and the definition  \eqref{newh} of $\wt{h}$,  $\Vert \widetilde{h}\Vert _{C^0}\le C  \lambda_{c}^{-1}\delta^{2}\le \delta$. Since $G^{-,e}(y,0,0,0)=h(c,y,0,0)=0$,  when the initial data $\bar{x}^{cs}=(\bar y, 0,0)$, the solution to \eqref{lp} is of the form $(c,y,0,0)$, which implies $\widetilde{h}(y,0,0)=0$. 

For $h_i \in \Gamma_{\mu, \delta}$, let $x^{cs}_i$ be the solutions to \eqref{lp} with $h =h_i$ and with the initial data $\bar x_i^{cs} \in X^{cs} (\delta)$, $i=1,2$, respectively. Since 
\[
\Vert h_1(x_1^{cs})-h_2(x_2^{cs})\Vert \le \mu \Vert x_1^{cs}-x_2^{cs} \Vert_{H^1, A} + \Vert h_1 - h_2 \Vert_{C^0},
\]
applying Lemma \ref{L:WP1}, we have 
\begin{equation}\label{diff-x12cs}
\Vert x_1^{cs}(t)-x_2^{cs}(t)\Vert_{H^1,A}\le Ce^{C\eta t}(\Vert \bar x_1^{cs}-\bar x_2^{cs}\Vert_{H^1,A}+
\Vert h_1-h_2\Vert _{C^0}).
\end{equation}

In \eqref{diff-x12cs}, letting $h_1=h_2=h$, we have 
$$\Vert x_1^{cs}(t)-x_2^{cs}(t)\Vert_{H^1,A}\le Ce^{\eta t}\Vert \bar x_1^{cs}-\bar x_2^{cs}\Vert_{H^1,A}.$$
It follows from \eqref{newh} and \eqref{lipGe} that 
\begin{equation}
\left\vert \widetilde{h}(\bar{x}^{cs}_1)-\widetilde{h}(\bar{x}^{cs}_2)\right\vert \le C\big(\lambda_c - C\eta \big)^{-1} \delta\Vert \bar x_1^{cs}-\bar x_2^{cs}\Vert_{H^1,A},
\end{equation}
which implies that $Lip(\wt{h})_{\Vert \cdot \Vert_{H^1, A}}\le \mu$ due to \eqref{parameter-set-2}.

Applying \eqref{lipGe} and \eqref{diff-x12cs} with $\bar x_1^{cs}=\bar x_2^{cs}$ in \eqref{newh}, we have 
\begin{equation}
\Vert \widetilde{h}_{1}-\widetilde{h}_{2}\Vert _{C^0}\le C \big(\lambda_c - C\eta\big)^{-1} \delta \Vert h_{1}-h_{2}\Vert _{C^0}.
\end{equation}
which along with \eqref{E:parameter-2} completes the proof. 
\end{proof}

Therefore, there exists $h^{cs}\in \Gamma_{\mu,\delta}$ such that $\mathcal{T}h^{cs}=h^{cs}$.  Let $\wt{x}^{cs}(t)$ be the solution to \eqref{lp} with $h=h^{cs}$ and let $\wt{a}^+(t)=h^{cs}(\wt{x}^{cs}(t))$. Using the definition of $h^{cs}$, one has 
\begin{equation}
\begin{split}
\wt{a}^+(t)=&-\int_{0}^{\infty}e^{-\lambda_{c}s}\widetilde{G}^{+}\left(\wt{x}^{cs}(t+s), \wt{a}^+(t+s)\right)ds\\
=&-\int_{t}^{\infty}e^{-\lambda_{c}(s-t)}\widetilde{G}^{+}\left(\wt{x}^{cs}(s), \wt{a}^+(s)\right)ds\\
=&-e^{\lambda_{c}t}\int_{0}^{\infty} e^{-\lambda_c s} \widetilde{G}^{+}\left(\wt{x}^{cs}(s), \wt{a}^+(s)\right)ds+\int_{0}^{t}e^{\lambda_{c}(t-s)}\widetilde{G}^{+}\left(\wt{x}^{cs}(s), \wt{a}^+(s)\right)ds\\
=&e^{\lambda_{c}t}\wt{a}^+(0)+\int_{0}^{t}e^{\lambda_{c}(t-s)}\widetilde{G}^{+}\left(\wt{x}^{cs}(s), \wt{a}^+(s)\right)ds,
\end{split}
\end{equation}
which implies that $(\wt{x}^{cs}(t),h^{cs}(\wt{x}^{cs}(t))$ is a solution to \eqref{E:cut}. As mentioned in Section \ref{outline}, the graph of $a^+=h^{cs}(x^{cs})$ over $X^{cs}(\delta)$ is the center-stable manifold, i.e.,
\begin{equation}
\CW^{cs}(\CM_{c})=\{ \Phi\big(x^{cs}, a^+=h^{cs}(x^{cs})\big)\mid x^{cs}\in B^{1}(\delta)\oplus\CX_c^{e}(\delta)\}. 
\end{equation} 

Together with Lemma \ref{L:reduce} and Remark \ref{R:C-0}, we have the local invariance of $\CW^{cs}(\CM_c)$ under \eqref{eqintf}. Recall the coordinate mapping $\Phi$ defined in \eqref{E:coord-1}. 

\begin{theorem}\label{centerstable}
If the solution $U(t)= \Phi\big( y(t), a^+(t), a^-(t), V^e(t)\big)$ to \eqref{eqintf} satisfies $|a^\pm(t)| \le \delta$ and $V^e(t)\in \CX^{e}(\delta)$  for $t\in [0,T]$ with $T>0$ and $U(0)\in \CW^{cs}(\CM_c)$, then $U(t)\in \CW^{cs}(\CM_c)$ for $t\in [0,T]$.
\end{theorem}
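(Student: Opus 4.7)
The plan is to reduce the assertion to a uniqueness statement for the cutoff system \eqref{E:cut}, by comparing the given solution $U(t)$ with the canonical ``on-manifold'' solution constructed from the fixed point $h^{cs}$. In the coordinate system $\Phi$, the identity $U(t)=\Phi\big(y(t),a^+(t),a^-(t),V^e(t)\big)$ with $U$ solving \eqref{eqintf} forces $(y,a^\pm,V^e)$ to satisfy the system \eqref{eqyp}--\eqref{apm}--\eqref{eqve2}. By the hypothesis $|a^\pm(t)|\le \delta$ and $(y(t),V^e(t))\in \R\times \CX^e(\delta)$ for $t\in[0,T]$, together with $\gamma\equiv 1$ on $[-1,1]$, the cutoff factor $\gamma_\delta(x^{cs}(t),a^+(t))\equiv 1$ along this trajectory. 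Hence the original coordinate equations coincide with the modified system \eqref{E:cut} on $[0,T]$, and $(x^{cs},a^+)(t)=(y,a^-,V^e,a^+)(t)$ is a solution of \eqref{E:cut} starting from the given initial data.

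Next I would build the comparison solution on the manifold. Set $\bar x^{cs}:=(y(0),a^-(0),V^e(0))\in B^1(\delta)\oplus\CX_c^e(\delta)$ and let $x^{cs}_*(t)$ be the global solution to \eqref{lp} with $h=h^{cs}$ and $x^{cs}_*(0)=\bar x^{cs}$, provided by Lemma \ref{WP}. Define $a^+_*(t):=h^{cs}(x^{cs}_*(t))$. The computation displayed immediately before the statement of Theorem \ref{centerstable} shows, via differentiation of the integral representation of the fixed point $h^{cs}=\CT h^{cs}$, that $a^+_*(t)$ satisfies $\partial_t a^+_* = \lambda_c a^+_* + \widetilde{G}^+(x^{cs}_*,a^+_*)$. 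Therefore the pair $(x^{cs}_*,a^+_*)$ is a solution of the coupled cutoff system \eqref{E:cut}. By the assumption $U(0)\in\CW^{cs}(\CM_c)$ we have $a^+(0)=h^{cs}(\bar x^{cs})=a^+_*(0)$, so both $(x^{cs},a^+)$ and $(x^{cs}_*,a^+_*)$ are solutions of \eqref{E:cut} with identical initial data.

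The conclusion will then follow from uniqueness for \eqref{E:cut} on $[0,T]$. This is where the only real technical point lies: the coupled system is an ODE in $(y,a^\pm)$ together with a dispersive PDE in $V$, so uniqueness is not completely automatic. However, the nonlinearities $\widetilde{G}^{T,\pm,e}$ are globally Lipschitz in $(x^{cs},a^+)$ after the cutoff (by \eqref{lipG} and Lemma \ref{lipGe}), and the linear $V$-part has the well-posed, bounded evolution $S(t,s)$ furnished by Lemma \ref{sh1} and Proposition \ref{leve}. A Duhamel/Gronwall estimate in the norm $\Vert\cdot\Vert_{ST^c_{[0,T]},A}$, parallel to part (2) of Lemma \ref{L:WP1} (with $\kappa_0=0$, $\kappa_1$ depending on $Lip(h^{cs})\le \mu$ to absorb the coupling through $a^+$), yields uniqueness. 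Hence $(x^{cs},a^+)(t)=(x^{cs}_*,a^+_*)(t)=\big(x^{cs}_*(t),h^{cs}(x^{cs}_*(t))\big)$ for $t\in[0,T]$, which is precisely the statement $U(t)\in\CW^{cs}(\CM_c)$ for $t\in[0,T]$.

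The main obstacle, as indicated, is isolating the right norm in which to close the uniqueness Gronwall argument for \eqref{E:cut}: the $V$-equation depends on $y$ through both $\Pi_{c,y}^e JL_{c,y}\Pi_{c,y}^e$ and the curvature term $\partial_t y\,\mathcal F(c,y)$, so one must track differences of $y$ in $C^{0,1}$ when comparing two trajectories. This is already handled in Lemma \ref{diff-hom} and in the proof of Lemma \ref{aprior3}, and reusing those estimates on a small time interval, then iterating, will give the required uniqueness without any growth issue, since only one initial datum is being considered.
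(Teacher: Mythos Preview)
Your proposal is correct and follows essentially the same approach as the paper: the paper verifies just before Theorem~\ref{centerstable} that $\big(\tilde x^{cs}(t),h^{cs}(\tilde x^{cs}(t))\big)$ solves the cutoff system \eqref{E:cut}, then invokes Lemma~\ref{L:reduce} and Remark~\ref{R:C-0} to conclude local invariance, with uniqueness for \eqref{E:cut} left implicit in the well-posedness results. You have made that uniqueness step explicit, which is the only substantive content of the argument; one small point worth noting is that Lemma~\ref{L:reduce} also guarantees your comparison solution $x^{cs}_*(t)$ stays in the bundle $\CX_c^e$, so that its graph point genuinely lies in $\CW^{cs}(\CM_c)$.
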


\begin{remark}
Later, we will prove the orbital stability on and a characterization of  the center-stable manifold, which yields the local uniqueness of the center-stable manifold. Therefore, we can patch the center-stable manifold of all the solitary waves together to form the center-stable manifold of $\mathcal{M}$.  
\end{remark}

\subsection{Construction of local center-unstable manifolds and center manifolds}
Denote 
 \begin{align*} 
 & X^{cu}(\delta)=\{(y,a^+, V): \Vert V\Vert_{H^1}\le \delta\},\qquad x^{cu}=(y,a^+,V),\\
 &\widetilde{G}^{cu} (x^{cu}, a^-) = (\widetilde{G}^T, \widetilde{G}^-, \widetilde{G}^e) (x^{cu}, a^-),  \\
&A^{cu} (y, \tilde y) = diag\big( 0, -\lambda_{c}, \Pi_{c,y}^{e}JL_{c,y}^e \Pi_{c,y}^{e}+\tilde y \mathcal{F} (c, y)  \big).
\end{align*}

We shall consider the following system of $x^{cs}$ and $a^+$, 
\begin{subequations} \label{E:cut-2}
\begin{equation} \label{E:cut-cs-1}  
\p_t x^{cu} = A^{cu}\big(y, \widetilde{G}^T (x^{cu}, a^-) \big) x^{cu} + \widetilde{G}^{cu}(x^{cu}, a^-)
\end{equation} 
\begin{equation} \label{E:cut-u-1}
\p_t a^- =- \lambda_{c} a^- + \widetilde{G}^- (x^{cu}, a^-).
\end{equation}
\end{subequations}

Define
\begin{equation}  
\Gamma_{\mu, \delta} = \{h : X^{cu} (\delta)  \to \R  \mid  h(y, 0,0) =0, \;  \Vert h\Vert_{C^0} \le \delta,  Lip(h)_{\Vert \cdot \Vert_{H^1, A}} \le \mu\}.
\end{equation}
For any $h \in \Gamma_{\mu, \delta}$ and $\bar x^{cu}\in X^{cu}(\delta)$, let $x^{cu}(t) = (y, a^+, V)(t) \in X^{cu}$ be the backward solution to 
\begin{equation}
\p_t x^{cu} = A^{cu} \big(y, \widetilde{G}^T(x^{cu}, h(x^{cu})\big) \big)x^{cu} + \widetilde{G}^{cu} \big(x^{cu}, h(x^{cu}) \big), \qquad x^{cu}(0)= \bar x^{cu}. 
\end{equation}
Then we define $\widetilde{h}(\bar{x}^{cu})$ as 
\begin{equation}
\widetilde{h} (\bar{x}^{cu})
= \bar a^-=\int_{-\infty}^{0}e^{-\lambda_{c}^- s}\widetilde{G}^{-} \big(x^{cu}(s), h(x^{cu}(s))\big) ds.
\end{equation}

Under suitable assumptions on $A$, $\delta$ and $\mu$ , $\wt h \in \Gamma_{\mu, \delta}$ is well-defined and the transformation $h\rightarrow \wt h$ is a contraction on $\Gamma_{\mu, \delta}$. The graph of the unique fixed point, restricted to the set $B^{1} (\delta) \oplus \CX_c^e (\delta)$
would be the desired center-unstable manifold $\CW^{cu}(\CM_c)$. Similar to the center-stable case,
we have the following theorem. 
\begin{theorem}\label{centerunstable}
If the solution $U(t)= \Phi\big( y(t), a^+(t), a^-(t), V^e(t)\big)$ to \eqref{eqintf} satisfies $|a^\pm(t)| \le \delta$ and $V^e(t)\in \CX^{e}(\delta)$  for $t\in [-T, 0]$ with $T>0$ and $U(0)\in \CW^{cu}(\CM_c)$, then $U(t)\in \CW^{cu}(\CM_c)$ for $t\in [-T,0]$.
\end{theorem}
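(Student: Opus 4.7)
The plan is to mirror the argument that yields Theorem~\ref{centerstable}, using the parallel Lyapunov--Perron construction of $\CW^{cu}(\CM_c)$ sketched above. Under the analogs of \eqref{E:parameter-1}--\eqref{parameter-set-2}, the backward-time transformation $h \mapsto \wt h$ on $\Gamma_{\mu,\delta}$ is a contraction whose unique fixed point $h^{cu}$ realizes $\CW^{cu}(\CM_c)$ as the graph $\{a^- = h^{cu}(x^{cu})\}$ over $B^1(\delta) \oplus \CX_c^e(\delta)$.

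The first step is to prove that this graph is invariant under the backward flow of the cut-off system \eqref{E:cut-2}. Let $x^{cu}(t)$ be the backward solution of \eqref{E:cut-cs-1} driven by $a^- = h^{cu}(x^{cu})$ with $x^{cu}(0) = \bar x^{cu}$. Solving \eqref{E:cut-u-1} backward from $a^-(0) = h^{cu}(\bar x^{cu})$ by variation of constants and inserting the fixed-point identity
\[
h^{cu}(\bar x^{cu}) = \int_{-\infty}^{0} e^{\lambda_c s}\,\widetilde{G}^{-}\big(x^{cu}(s), h^{cu}(x^{cu}(s))\big)\, ds
\]
produces, for $t \le 0$,
\[
a^-(t) = \int_{-\infty}^{t} e^{\lambda_c (s-t)}\,\widetilde{G}^{-}\big(x^{cu}(s), h^{cu}(x^{cu}(s))\big)\,ds,
\]
and a change of variables together with the flow property of $x^{cu}(\cdot)$ identifies the right-hand side with $h^{cu}(x^{cu}(t))$. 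Backward uniqueness for \eqref{E:cut-2} (the analog of Lemma~\ref{L:WP1}(2)) then forces the trajectory through $(\bar x^{cu}, h^{cu}(\bar x^{cu}))$ to remain on the graph.

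The second step is to pass from the cut-off flow to the genuine gKDV flow. The assumption $|a^\pm(t)|\le \delta$ and $V^e(t) \in \CX^e(\delta)$ on $[-T,0]$ yields $\gamma_\delta \equiv 1$ along $U$, so the cut-off nonlinearities $\widetilde G^{T,\pm,e}$ agree with the unmodified $G^{T,\pm,e}$ of Section~\ref{derivationofeqns}. Consequently the gKDV solution $U(t)$, expressed in bundle coordinates via \eqref{E:coord-1}, \eqref{eqyp}, \eqref{apm}, \eqref{eqve2}, satisfies \eqref{E:cut-2} on $[-T,0]$. Since $U(0) \in \CW^{cu}(\CM_c)$ means $a^-(0) = h^{cu}(x^{cu}(0))$, the invariance from Step~1 combined with backward uniqueness forces $a^-(t) = h^{cu}(x^{cu}(t))$ throughout $[-T,0]$, i.e., $U(t) \in \CW^{cu}(\CM_c)$.

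The main technical point (rather than a genuine obstacle) is the interplay between the flat space $X^{cu}(\delta)$, on which $h^{cu}$ is defined, and the bundle $\R^2 \oplus \CX_c^e$, in which the gKDV solution naturally lives. Lemma~\ref{L:reduce}(1) ensures that a trajectory of \eqref{E:cut-2} starting with $V(0) \in X_{c,y(0)}^e$ remains in $\CX_c^e$, so the cut-off backward trajectory and the gKDV backward trajectory can indeed be compared. Combined with Lemma~\ref{L:reduce}(2) this shows only the restriction of $h^{cu}$ to $\R \oplus \CX_c^e(\delta)$ enters the definition of $\CW^{cu}(\CM_c)$, which closes the argument.
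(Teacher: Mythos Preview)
Your proposal is correct and follows essentially the same approach as the paper: the paper itself states Theorem~\ref{centerunstable} with only the remark ``Similar to the center-stable case, we have the following theorem,'' and your Steps~1--2 are precisely the backward-time analog of the argument given for Theorem~\ref{centerstable} (the computation after Proposition~\ref{contraction} showing that $\bigl(\wt x^{cs}(t), h^{cs}(\wt x^{cs}(t))\bigr)$ solves \eqref{E:cut}, combined with Lemma~\ref{L:reduce} and Remark~\ref{R:C-0}). The only cosmetic difference is that the paper verifies directly that $\wt a^+(t):=h^{cs}(\wt x^{cs}(t))$ satisfies the $a^+$-equation, whereas you solve the $a^-$-equation by variation of constants and then identify the result with $h^{cu}(x^{cu}(t))$; these are the same computation read in opposite directions.
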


We obtain the local center manifold $\CW^c(\CM_c)$ as the intersection of the center-stable and center-unstable manifolds. In fact a point $U=\Phi(y,a^+,a^-,V^e)\in \CW^{c}(\CM_c)$ if and only if 
\begin{equation}\label{fpcm}
\begin{cases}
a^+=h^{cs}\left(y,a^{-},V\right)\\
a^-=h^{cu}\left(y,a^{+},V\right).
\end{cases}
\end{equation}
Since the Lipschtiz constant of $h^{cs}$ and $h^{cu}$ are both $\mu<\frac 1 2$, fixing $y$ and $V\in H^{1}$ with $\Vert V \Vert_{H^1}\le \delta$,  $(h^{cu},h^{cs})$ is a contraction with Lipschitz constant $\mu$ on $\R^2$, and consequently, it has a fixed point $(a^+,a^-)=h^{c}\left(y,V\right)$. Clearly $h^{c}\left(y,V\right)$ has a Lipschitz constant $\frac \mu{1-\mu}$ in the $\Vert \cdot \Vert_{H^1, A}$ norm. The graph of $(a^+,a^-)=h^c(y,V)$ restricted to $\CX_c^e(\delta)$ is the desired center manifold. 

\begin{theorem}\label{center}
If the solution $U(t)= \Phi\big( y(t), a^+(t), a^-(t), V^e(t)\big)$ to \eqref{eqintf} satisfies $|a^\pm(t)| \le \delta$ and $V^e(t)\in \CX^{e}(\delta)$  for $t\in [-T, T]$ with $T>0$ and $U(0)\in \CW^{c}(\CM_c)$, then $U(t)\in \CW^{c}(\CM_c)$ for $t\in [-T,T]$.
\end{theorem}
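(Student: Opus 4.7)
The plan rests on the identity $\CW^c(\CM_c)=\CW^{cs}(\CM_c)\cap \CW^{cu}(\CM_c)$ together with bilateral invariance of the two one-sided manifolds under the cutoff flow. Since $U(0)\in \CW^c(\CM_c)$ automatically lies in both $\CW^{cs}(\CM_c)$ and $\CW^{cu}(\CM_c)$, Theorems \ref{centerstable} and \ref{centerunstable} immediately deliver ``half'' of the required conclusion: $U(t)\in\CW^{cs}(\CM_c)$ for $t\in[0,T]$ and $U(t)\in\CW^{cu}(\CM_c)$ for $t\in[-T,0]$.

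Because $|a^\pm(t)|\le\delta$ and $V^e(t)\in \CX_c^e(\delta)$ on the whole interval $[-T,T]$, the cutoff factor $\gamma_\delta$ appearing in $\widetilde G^{T,\pm,e}$ is identically $1$ along the orbit, so $U(t)$ coincides on $[-T,T]$ with the solution of the cutoff system \eqref{E:cut} (equivalently \eqref{E:cut-2}) issuing from $U(0)$. This cutoff flow is globally well posed by Lemma \ref{WP}, so what remains is to control $U(t)$ on the remaining pieces: $U(t)\in\CW^{cu}(\CM_c)$ for $t\in[0,T]$ and $U(t)\in\CW^{cs}(\CM_c)$ for $t\in[-T,0]$.

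The main point that needs care is upgrading the one-sided invariance statements of Theorems \ref{centerstable}--\ref{centerunstable} to bilateral invariance under the cutoff flow. I would do this by rerunning the verification at the end of the center-stable construction. For the solution $\widetilde x^{cs}(t)$ of \eqref{lp} extended to all $t\in\R$, the fixed-point identity
\[
h^{cs}\bigl(\widetilde x^{cs}(t)\bigr)=-\int_0^\infty e^{-\lambda_c s}\widetilde G^+\bigl(\widetilde x^{cs}(t+s),h^{cs}(\widetilde x^{cs}(t+s))\bigr)\,ds
\]
is meaningful for every $t\in\R$, since global well-posedness of \eqref{lp} supplies the orbit $s\mapsto \widetilde x^{cs}(t+s)$ on $[0,\infty)$ regardless of the sign of $t$. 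The same change-of-variables manipulation already carried out in the construction then yields the variation-of-constants relation for $\widetilde a^+(t):=h^{cs}(\widetilde x^{cs}(t))$ on all of $\R$, proving that the graph of $h^{cs}$ is invariant in \emph{both} time directions under the cutoff flow \eqref{E:cut}; an entirely parallel argument using $h^{cu}$ and \eqref{E:cut-2} handles $\CW^{cu}(\CM_c)$.

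Putting the pieces together, $U(t)$ lies on both graphs for every $t\in[-T,T]$, and therefore $U(t)\in\CW^c(\CM_c)$. No new estimates beyond global well-posedness of the cutoff flow are needed; the only mild obstacle is the bookkeeping check that the one-sided statements of Theorems \ref{centerstable}--\ref{centerunstable} automatically extend to two-sided invariance at the level of the cutoff system.
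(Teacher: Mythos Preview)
Your proposal is correct and is essentially the argument the paper leaves implicit. The paper gives no separate proof of Theorem~\ref{center}: it simply defines $\CW^c(\CM_c)=\CW^{cs}(\CM_c)\cap\CW^{cu}(\CM_c)$ and states the theorem, relying on Theorems~\ref{centerstable} and~\ref{centerunstable}. You have correctly identified that those two theorems are one-sided and that the missing ingredient is the \emph{two-sided} invariance of the graphs of $h^{cs}$ and $h^{cu}$ under the cutoff flow; your argument via the fixed-point identity and the variation-of-constants computation (the same computation the paper carries out just after Proposition~\ref{contraction}, now run for $t<0$ as well) is exactly how this gap is filled.

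One small point worth noting: your argument needs $h^{cs}\bigl(\wt x^{cs}(t)\bigr)$ to be defined along the backward graph orbit, i.e.\ $\wt x^{cs}(t)\in X^{cs}(\delta)$, whereas the a priori bound (cf.\ Lemma~\ref{WP} and \eqref{bov}) only gives $X^{cs}(C\delta)$. In the setting of Theorem~\ref{center} this is harmless, since the hypothesis already forces the \emph{actual} orbit $x^{cs}(t)$ to remain in $X^{cs}(\delta)$ on $[-T,T]$; once you show (by uniqueness of solutions to \eqref{E:cut}) that the graph orbit through $(x^{cs}(0),h^{cs}(x^{cs}(0)))$ coincides with the given orbit, the domain issue disappears. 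This is the same mild sloppiness present in the paper (see Remark~\ref{R:C-0}), not a flaw in your reasoning.
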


\section{Smoothness of Center-stable manifolds}\label{smoothness of invariant manifolds}

In this section, assuming \eqref{E:parameter-1}, \eqref{E:parameter-2}, \eqref{parameter-set-2}, and 
\begin{equation} \label{E:parameter-3}
C\delta (\lambda_c - C \eta)^{-1} < \eta.
\end{equation}
we prove the smoothness of the center-stable manifold $\CW^{cs}(\CM_c)$ with respect to $(y,a^-,V)$, the smoothness of the center-unstable manifold can be proved similarly. Then one automatically obtains the smoothness of the center manifold since it is the intersection of the center-stable and center-unstable manifolds.  The smoothness of the local invariant manifolds  with respect to $c$ will be discussed in Section \ref{classification}.

Despite the substantial difference in estimates, the proof of the smoothness fits in the framework in \cite{JLZ}, where smooth local invariant manifolds of traveling waves of the Gross-Pitaevskii equation were constructed. With all the estimates established in Section \ref{LinearAnalysis} and Section \ref{construction}, actually the proof is quite similar to the one in \cite{JLZ}. We will sketch the main steps of proving the $C^1$ smoothness. Following the approach in \cite{JLZ}, one may prove higher order smoothness. Our proof of $C^1$ smoothness here illustrates how to adapt the estimates for gKDV to fit in the framework in \cite{JLZ}.

For the simplicity of the presentations, we first  have to introduce some notations. For $t \ge 0$, let 
\[
\Psi(t, x^{cs})= (y(t), a^-(t), V(t)), \quad x^{cs}  \in X^{cs} (\delta), 
\]
be the solution to \eqref{lp} with $h=h^{cs}$ and initial value $x^{cs}$.  By Lemma \ref{L:reduce}, we have 
\begin{equation} \label{E:reduce-1}
\wt \Pi^e \Psi (t, x^{cs})  = \Psi(t, x^{cs}), \quad \forall t\ge 0 \; \; \text{ if } \; \wt \Pi^e x^{cs}= x^{cs}. 
\end{equation}
Moreover, assuming \eqref{E:parameter-1}, \eqref{E:parameter-2}, and \eqref{parameter-set-2}, Lemma \ref{aprior3} and \ref{L:WP1} imply, for all $t \ge 0$,  
\begin{equation} \label{E:Lip-Psi}
Lip_{\Vert \cdot \Vert_{H^1, A}} \Psi(t, \cdot) \le C  e^{C\eta t},
\quad \Psi(t, x^{cs}) \in X^{cs} (C\delta), \; \forall x^{cs} \in X^{cs}(\delta).
\end{equation}

We first outline our approach of proving the $C^1$ smoothness briefly.  As the fixed point of the transformation $\CT$, $h^{cs}$  satisfies
\begin{equation} \label{E:fix-p} 
h^{cs} (x^{cs}) = -\int_{0}^\infty e^{-\lambda_{c} t} \wt G^+\Big(\Psi(t, x^{cs}), h^{cs}\big(\Psi(t, x^{cs})\big) \Big) dt.  
\end{equation}
Since \eqref{lp} is autonomous, a time translation of \eqref{E:fix-p} implies, for $t\ge 0$, 
\begin{equation} \label{E:fix-p-1} 
h^{cs}\big(\Psi(t, x^{cs})\big) = -\int_{t}^\infty e^{ \lambda_c(t-\tau)} \wt G^+\Big(\Psi(\tau, x^{cs}), h^{cs}\big(\Psi(\tau, x^{cs})\big) \Big) d\tau.  
\end{equation}
Differentiating \eqref{E:fix-p} formally,  we obtain, for any $\wt W \in X^{cs}$, 
\begin{align*}
Dh^{cs} (x^{cs}) \wt W =&  -\int_{0}^\infty e^{-\lambda_c t} \Big(D_{a^+}  \wt G^+\big(\Psi(t, x^{cs}), h^{cs}\big(\Psi(t, x^{cs})\big) \big)Dh^{cs} \big(\Psi(t, x^{cs})\big) \\
&+ D_{x^{cs}} \wt G^+\big(\Psi(t,x^{cs} ), h^{cs}\big(\Psi(t, x^{cs})\big) \big)\Big) D\Psi(t, x^{cs}) \wt W dt. 
\end{align*}
Here $D\Psi$ also depends on $Dh^{cs}$ as it solves the following system of equation derived by differentiating \eqref{lp}
\begin{equation} \label{E:DPsi}
\p_t D\Psi = A^{cs} \big(y(t), \wt G^T \big) D\Psi + \CG_1 (\Psi) D\Psi + \wt\CG_1 (\Psi) Dh^{cs} D\Psi,  
\end{equation} 
where $\Psi$ and $D\Psi$ are evaluated at $(t, x^{cs})$, $\wt G^{cs}$ at $( \Psi, h^{cs})$, $h^{cs}$ and $Dh^{cs}$ at $\Psi$. In the above $\CG_1 \in C^m \big(X^{cs},  \mathcal{L}(X^{cs})\big)$ and $\wt \CG_1 \in C^m (X^{cs}, X^{cs})$ are given by 
\begin{equation} \label{E:TCG-1} \begin{split}
\wt \CG_1 (x^{cs})  =& D_{a^+} \big(A^{cs} \big(y, G^T(x^{cs},a^+) \big) \big)  x^{cs}+ D_{a^+} \wt G^{cs}\\
=& \big(0, 0, (D_{a^+} \wt G^T) \CF (c,y) V \big)+ D_{a^+} \wt G^{cs},\\
\end{split} \end{equation}
\begin{equation} \label{E:CG-1} \begin{split}
\CG_1 (x^{cs}) \wt W = & D_{x^{cs}} \big(A^{cs} \big(y, G^T(x^{cs}, a^+) \big) \big) (\wt W) x^{cs}+ D_{x^{cs}} \wt G^{cs} (\wt W)\\
=& \Big(0, 0, \wt y \big( D_y A^e(y) \big) V+  \big( D_{x^{cs}} \wt G^T (\wt W)\big) \CF\big( c, y \big)V  \\ 
&\qquad \qquad \qquad + \wt y \wt G^TD_y \CF(c, y) V\Big)  + D_{x^{cs}} \wt G^{cs} (\wt W) 
\end{split} \end{equation}
where $x^{cs} =(y, a^-, V)$, $\wt W =(\wt y, \wt a^-,\wt V) \in X^{cs}$, $a^+$ is evaluated at $h^{cs} (x^{cs})$, and $\wt G^{cs}$ is evaluated at $\big(x^{cs}, h^{cs}(x^{cs})\big)$. 

Denote
\[
Y_1= C^0\big(X^{cs} (\delta), \mathcal{L}(X^{cs}, \R) \big).
\]
Inspired by the above formally derivation, we define a linear transformation $\CT_1$ on 
\[
Y_1(\mu)= \{ \CH\in Y_1, \Vert \CH\Vert_{Y_1}\le \mu \big) 
\]
as, for any $\CH \in Y_1(\mu)$, $x^{cs}\in X^{cs} (\delta)$, and $\wt W \in X^{cs}$, 
\begin{equation} \label{E:CT1} \begin{split}
(\CT_1 \CH) (x^{cs}) \wt W =&   -\int_{0}^\infty e^{-t\lambda_c} \Big(D_{x^{cs}} \wt G^+\big(\Psi, h^{cs} (\Psi) \big)\\
&+D_{a^+}  \wt G^+\big(\Psi, h^{cs}(\Psi) \big)\CH \big(\Psi\big) \Big) \Psi_1 (t) \wt W dt
\end{split} \end{equation}
where $\Psi$ is evaluated at $(t, x^{cs})$. Operator $\Psi_1 (t) \in \mathcal L(X^{cs})$ satisfies $\Psi_1(0) =I$ and 
\begin{equation} \label{E:Psi-1}
\p_t \Psi_1 = A^{cs} \big(y(t), \wt G^T \big) \Psi_1 + \CG_1 (\Psi) \Psi_1 + \big(\CH (\Psi) \Psi_1 (\cdot)\big)\wt \CG_1 (\Psi),   
\end{equation}
where $\CG$ and $\CG_1$ are given in \eqref{E:CG-1}, $\wt G^{cu}$ is evaluated at $\big( \Psi, h^{cs} (\Psi)\big)$, and $\CH$ at $\Psi(t, x^{cs})$. 
Note that $h^{cs}\in \Gamma_{\mu,\delta}$, it is natural to require the $\Vert Dh^{cs}\Vert_{Y_1}\le \mu.$ Just as in Remark \ref{R:C-0}, the right side of \eqref{E:Psi-1} and the integrand in \eqref{E:CT1} are well-defined. Since \eqref{lp} is autonomous, when $x^{cs}$ is shifted to $\Psi(t_0, x^{cs})$, the principle fundamental solution to the associated \eqref{E:Psi-1} becomes $\Psi_1(t+t_0) \Psi_1(t_0)^{-1}$. Therefore we obtain 
\begin{equation} \label{E:CT1-shift} \begin{split}
(\CT_1 \CH) \big( \Psi(t_0, x^{cs})\big) \Psi_1(t_0) & \wt W =   -\int_{t_0}^{\infty} e^{(t_0-t)\lambda_c^+} \Big(D_{x^{cs}} \wt G^+\big(\Psi, h^{cs} (\Psi) \big)\\
&+D_{a+}  \wt G^+\big(\Psi, h^{cs}(\Psi) \big)\CH \big(\Psi\big) \Big) \Psi_1 (t) \wt W dt, 
\end{split} \end{equation}
where $\Psi$ is still evaluated at $(t, x^{cs})$ and $\Psi_1$ defined for $x^{cs}$. 

If $h^{cs} \in C^1$, then $Dh^{cs}$ must be the fixed point of $\CT_1$. Therefore, our strategy to prove $h^{cs} \in C^1$ is to show 
\begin{enumerate}
\item $\CT_1$ is a well-defined contraction on $Y_1(\mu)$,
\item  the fixed point of $\CT_1$ is indeed $Dh^{cu}$. 
\end{enumerate}
Throughout the procedure, \eqref{E:parameter-1}, \eqref{E:parameter-2}, and \eqref{parameter-set-2} are assumed. 

{\it Step 1: show $\CT_1 \CH \in Y_1(\mu) $. } 
Analogous to Lemma \ref{L:WP1}, we have that for any $x^{cs}\in X^{cs}(\delta)$,
\begin{equation}\label{estimate-psi1}
\Vert\Psi_1(t,x^{cs})\wt W\Vert_{H^1,A}\le Ce^{C\eta t}\Vert \wt W\Vert_{H^{1},A},
\end{equation}
which along with \eqref{parameter-set-2} implies
\begin{equation}\label{C0-H}
\vert(\CT_1 \CH) (x^{cs}) \wt W\vert \le C\delta (\lambda_{c}-C\eta)\Vert \wt W\Vert_{H^{1},A} \le \mu \Vert \wt W\Vert_{H^{1},A}.
\end{equation}

Much as \eqref{C0-H}, it also holds that $\CT_1^{(n)} (\CH) \to \CT_1(\CH)$ uniformly in $x^{cs}$, where 
\begin{align*} 
\big(\CT_1^{(n)} ( \CH)\big) (x^{cs}) \wt W =&   -\int_{0}^n e^{-\lambda_c t} \Big(D_{x^{cs}} \wt G^+\big(\Psi, h^{cs} (\Psi) \big)\\
&+D_{a^+}  \wt G^+\big(\Psi, h^{cs}(\Psi) \big)\CH \big(\Psi\big) \Big) \Psi_1 (t) \wt W dt.
\end{align*} 
From the continuity of $D\wt G^{cs, +}$, it is easy to verify that $\big(\CT_1^{(n)} (\CH)\big) (x^{cs})$ is $C^0$ in $x^{cs}$. Therefore $\CT_1(\CH)$ is also continuous and thus $\CT_1(\CH) \in Y_1 (\mu)$.  

{\it Step 2: estimate the Lipschtiz constant of $\CT_1$.} Let $\CH_j\in Y_1(\mu)$ and $\Psi_{1, j}(t)$ be defined in  \eqref{E:Psi-1} for $\CH_j$, $j=1,2$, which satisfy 
\begin{align*}
\p_t (\Psi_{1,2} - \Psi_{1,1}) = &\big( A^{cs} (c,y, G^T) - \CG_1(\Psi) - \wt \CG_1(\Psi) \CH_1 \big) (\Psi_{1,2} - \Psi_{1,1})\\
& + \big((\CH_2 - \CH_1)  (\Psi) \Psi_{1,2}\big) \wt \CG_1(\Psi)
\end{align*}
and $(\Psi_{1,2} - \Psi_{1,1})(0)=0$. 

From estimate \eqref{estimate-psi1} on homogeneous solutions to \eqref{E:Psi-1} and the variation of constant formula, we obtain 
\[
\Vert (\Psi_{1,2}(t,x^{cs}) - \Psi_{1,1}(t,x^{cs}))\wt W \Vert_{H^1,A} \le C\delta t e^{C\eta t} \Vert \CH_2 - \CH_1\Vert_{Y_1}\Vert \wt W\Vert_{H^1,A}
\]
where we also used $\Vert \wt \CG_1 \Vert_{H^1, A} \le C \delta$ which is obvious from its definition. 
According to the definition of $\CT_1$, we have, for any $ x^{cs}\in X^{cs}(\delta)$,  
\begin{align*}
\big(\CT_1(\CH_1) - \CT_1(\CH_2)\big) (x^{cs}) =& -\int_{0}^\infty e^{-\lambda_{c} t} \big(D_{a^+} \wt G^+ (\CH_2 - \CH_1) \Psi_{1,2} (t) \\
&+(D_{x^{cs}} \wt G^+ + D_{a^+} G^+ \CH_1) (\Psi_{1,2} - \Psi_{1,1}) (t)  \big)  dt,   
\end{align*}
where $D\wt G^+$ is evaluated at $\big(\Psi, h^{cs} (\Psi)\big)$, $\CH_j$ at $\Psi$, and $\Psi$ at $(t, x^{cs})$. Using \eqref{estimate-psi1}, and the above estimates on  $\Psi_{1,2} - \Psi_{1,1}$, it follows that 
\begin{align*}
\Vert \CT_1(\CH_1) - \CT_1(\CH_2) \Vert_{Y_1} \le C\delta  (\lambda -C\eta)^{-2} \Vert \CH_2 - \CH_1\Vert_{Y_1}.
\end{align*}

Assume 
\begin{equation} \label{E:parameter-5}
C \delta  (\lambda -C\eta)^{-2} <1,
\end{equation}
then $\CT_1$ is a contraction mapping on $Y_1(\mu)$. Let $\CH^{cs} \in Y_1(\mu)$ be the unique fixed point of $\CT_1$. 

{\it Step 3: Show $Dh^{cs}=\CH^{cs}$.} Since $\CH^{cs}(x^{cs})$ is continuous in $x^{cs}$, it suffices to show $Dh^{cs} (x^{cs}_0) \wt W = \CH^{cs}(x^{cs}_0) \wt W$ at any fixed $x^{cs}_0 \in X^{cs}(\delta)$ and $\wt W \in \wt X^{cs}\backslash \{0\}$. 
Let $\Psi_1(t)$ be defined as in \eqref{E:Psi-1} associated to $\CH^{cs}$ and $x^{cs}_0$ and 
\begin{align*}
&R_\Psi (t) = \Psi(t, x^{cs}_0 +\wt W) - \Psi(t, x^{cs}_0) - \Psi_1 (t) \wt W, \\
&R_h (t) = h^{cs} \big( \Psi(t, x^{cs}_0 + \wt W) \big) - h^{cs} \big( \Psi(t, x^{cs}_0)\big) - \CH^{cs} \big(\Psi(t, x^{cs}_0)\big) \Psi_1(t) \wt W. 
\end{align*} 
Denote \begin{align*} 
&W(s,t) = (1-s) \Psi(t, x^{cs}_0) + s \Psi(t, x^{cs}_0 + \wt W), \\ 
&a^+ (s,t) = (1-s) h^{cs} \big(\Psi(t, x^{cs}_0)\big)  + s h^{cs} \big( \Psi(t, x^{cs}_0 + \wt W) \big).
\end{align*}
and for $\alpha = cs, +$  
\begin{align*}
R^{\alpha} (t) =& \wt G^{\alpha}  \big( W(1, t), a^+(1, t)\big) - \big[ \wt G^{\alpha}   + D_{x^{cs}} \wt G^{\alpha}  \big( W(1, t) - W(0, t)\big) \\
&+ D_{a^+}\wt G^{\alpha}  \big( a^+(1,t) - a^+(0, t)  \big)  \big]
\end{align*}
where $\wt G^{cs} $ and $D\wt G^{cs} $ in the brackets $[\ldots]$ are evaluated at 
$\big( W(0, t), a^+(0, t)\big) =\Big( \Psi(t, x^{cs}_0), h^{cu}\big(\Psi(t, x^{cs}_0)\big)\Big)$.
From \eqref{E:fix-p} and $\CT_1(\CH^{cs}) = \CH^{cs}$, we have 
\[
R_h(0) = -\int_{0}^\infty e^{-\lambda_c t}\big( R^+ (t) + D_{x^{cs}} \wt G^+ R_\Psi  (t)  + D_{a^+}\wt G^+ R_h(t) \big) dt .
\]
Moreover, using  \eqref{E:fix-p} and \eqref{E:CT1-shift}, we also obtain 
\begin{equation} \label{E:R_h}
R_h(t) = -\int_{0}^\infty e^{-\lambda_c \tau} ( R^+  + D_{x^{cs}} \wt G^+ R_\Psi   + D_{a^+}\wt G^+ R_h)(t+\tau) d\tau,  \quad t\ge 0, 
\end{equation} 
where again the above $D \wt G^+$ are evaluated at $\Big( \Psi(t+\tau, x^{cs}_0), h^{cs}\big(\Psi(t+\tau, x^{cs}_0)\big)\Big)$. 

From \eqref{lp} and \eqref{E:Psi-1}, $R_\Psi(t)$ satisfies $R_\Psi(0)=0$ and 
\begin{align*} 
\p_t R_\Psi& =  \wt A_0^{cs} (t) R_\Psi + \wt A_0^+ (t) R_h + R^{cs} + D_{x^{cs}} \wt G^{cs} R_\Psi  + D_{a^+}\wt G^{cs} R_h+\int_0^1 (\wt A_s^{cs} \\
&- \wt A_0^{cs}) (t) \big( W(1,t) - W(0, t) \big)  
+ (\wt A_s^+ - \wt A_0^+) (t) \big( a^+(1,t) - a^+(0, t)    
\big) ds
\end{align*}
where $D\wt G^{cs}$ is evaluated at $\big( W(0, t), a^+(0, t)\big)$, $\wt A_s^+(t) \in X^{cs}$ and the operator $\wt A_s^{cs}(t) \in L(X^{cs})$ are given by 
\begin{align*}
\wt A_s^{cs}(t) \wt W = & D_{x^{cs}} \Big(A^{cs} \big(y, G^T (x^{cs},a^+)\big) x^{cs}\Big)|_{\big(W(s,t), a^+(s,t)\big)} (\wt W)\\
=& A^{cs} \Big(y(s, t), G^T\big(W(s,t), a^+(s,t) \big)\Big) \wt W \\
&+ D_{x^{cs}} \Big(A^{cs} \big(y, G^T (x^{cs},a^+)\big)\Big)|_{\big(W(s,t), a^+(s,t)\big)} (\wt W) W(s,t)  \\
\wt A_s^+(t) = &D_{a^+} \Big(A^{cs} \big(y, G^T (x^{cs},a^+)\big)\Big)\vert_{\big(W(s, t), a^+(s, t)\big)} W(s,t)
\end{align*}
with $W(s,t)$ and $a^+(s,t)$ defined in the above and $y(s,t)$ being the $y$ component of $W(s,t)$ (so the $D_{x^{cs}}$ also acts on the $y$ component in $A^{cs}$). 
 
For $\wt W = (\wt y, \wt a^-, \wt V)$, we have 
\[
\wt A_s^{+}  \wt W= (0, 0,  V^+ ), \quad (\wt A_s^{cs} - A^{cs}|_{(W(s, t), a^+(s, t))}) \wt W= \big(0, 0, V^{cs} )
\]
and from Lemma \ref{lipGe}, 
\begin{equation} \label{E:temp-2.9}
\Vert V^{cs, +} \Vert_{L_{[t_0, t_0+T]}^1 H^1} \le C T^{\frac 12} (1+T) \Vert W \Vert_{X_{[t_0, t_0+T]}^{cs} }\Vert \wt V \Vert_{L_{[t_0, t_0+T]}^\infty H^1}. 
\end{equation} 

We first consider $t_2 \in (t_1, t_1+1)$ in the following estimates, where we can use $W\in X_{[t_1, t_2]}^{cs} (C\delta)$ due to Lemma \ref{L:WP1} which also yields 
\begin{equation}\label{W1W0-ST} \begin{split}
\Vert W(1,t)- W(0,t)\Vert_{ST^{c}_{[t_1,t_2]}, A}\le& 
C \Vert W(1,t_1)-W(0,t_1)\Vert_{H^1,A}
\le C e^{C \eta t_2}\Vert \wt W\Vert_{H^1,A}.
\end{split}\end{equation}
A similar argument would imply 
\begin{equation}
\Vert \Psi_1(t)\wt W\Vert_{ST^{c}_{[t_1,t_2]}, A}\le 
C e^{C \eta t_2}\Vert \wt W\Vert_{H^1,A}.
\end{equation}
Inequality \eqref{W1W0-ST} along with  \eqref{lipG} and Lemma  \ref{lipGe} implies
\begin{equation}  \label{E:R_cs} 
\begin{split}
\Vert R^{cs}(t)\Vert_{L^1_{[t_1,t_2]}X^{cs}} + |R^+(t_2)| \le & 
C \Vert W(1,t)- W(0,t)\Vert^2_{X^{cs}_{[t_1,t_2]}} 
\le  C e^{C\eta t_2} \Vert \wt W \Vert_{H_1, A}^2.
\end{split}
\end{equation}
for any $0\le t_1<t_2$. 

The integral terms in $\p_t R_\Psi$ can be estimated by Lemma \ref{lipGe}, which along with inequalities \eqref{W1W0-ST} and \eqref{E:R_cs} implies 
\begin{align*} 
&\Vert \p_t R_\Psi - \wt A_0^{cs}(t) R_\psi - \wt A_0^+ (t) R_h - D_{x^{cs}} \wt G^{cs} R_\Psi  - D_{a^+}\wt G^{cs} R_h  \Vert_{L^1_{[t_1,t_2]}X^{cs}} \\ 
\le & 
C \Vert W(1,t)- W(0,t)\Vert^2_{X^{cs}_{[t_1,t_2]}} \le  C  e^{C\eta t_2} \Vert \wt W \Vert_{H_1, A}^2.
\end{align*}
 
Again we apply Lemma \ref{lipGe} and inequalities \eqref{lipG} and \eqref{E:temp-2.9} to estimate other remainder terms linear in $R_\Psi$ and $R_h$ and obtain 
\begin{align*}
&\Vert \p_t R_\Psi - A^{cs}\left(y(0,t),G^T(W(0,t),a^{+}(0,t))\right) R_\psi  \Vert_{L^1_{[t_1,t_2]}(H^1,A)} \\ 
\le &C\eta (\Vert R_\psi \Vert_{ST^{c}_{[t_1,t_2]},A} + |R_h|_{L^{\infty}_{[t_1,t_2]}})  + C e^{C\eta t_2}  \Vert \wt W \Vert_{H^1, A}^2.
\end{align*}

With the above estimates, following the same arguments in the proof of Lemma \ref{L:WP1}, we have 

\begin{equation} \label{E:R-Psi} \begin{split}
\Vert R_\Psi (t) \Vert_{H_1, A} \le & C\big( \|e^{C \eta (t-\cdot)}R_h\|_{L^1_{[0,t]}}+  e^{C\eta t}  \Vert \wt W\Vert_{H_1, A}^2\big)
\le  Ce^{C \eta t}( \eta^{-1}  R_{h,\infty} + \Vert \wt W\Vert_{H_1, A}^2\big)
\end{split}\end{equation}
where $R_\psi(0) =0$ was used and 
\[
R_{h, \infty} \triangleq \Vert e^{-C\eta t} R_h(t) \Vert_{L_{\R^+}^\infty}. 
\]
Here $R_{C, \infty}<\infty$ for some $C>0$ is due to \eqref{E:Lip-Psi}  and \eqref{estimate-psi1}. 
Substituting this into \eqref{E:R_h}, using \eqref{lipG}, and noting that the estimate on $R_+(t_2)$ in \eqref{E:R_cs} is independent of $t_1$, we can compute 
\begin{align*}
R_{h, \infty} \le &Ce^{-C \eta t} \int_0^\infty e^{-\lambda_c \tau} \big( e^{C \eta (t+\tau)}\Vert \wt W\Vert_{H_1, A}^2 + \delta ( |R_h|+ \Vert R_\Psi \Vert_{X^{cs}} ) (t +\tau)\big) d\tau \\
\le & Ce^{-C \eta t} \int_0^\infty e^{- \lambda_c \tau} e^{C \eta (t+\tau)}( \Vert \wt W\Vert_{H_1, A}^2 + \delta \eta^{-1} R_{h, \infty} ) d\tau \\ 
\le& C (\lambda_c - C\eta)^{-1}  (\Vert \wt W\Vert_{H_1, A}^2 +\delta \eta^{-1} R_{h, \infty}). 
\end{align*}
Therefore assumption \eqref{E:parameter-3} implies 
\[
R_{h, \infty} \le  C (\lambda_c - C\eta)^{-1} \Vert \wt W\Vert_{H_1, A}^2. 
\]
By letting $t=0$, we have $|R_h(0)| \le  C (\lambda_c - C\eta)^{-1} \Vert \wt W\Vert_{H_1, A}^2$ which  complete the proof of $C^1$ smoothness of the center-stable manifold.

Finally, we prove the center-stable manifold is tangent to the center-stable subspace along $\CM_c$. 
\begin{lemma} \label{L:tangency} 
There exists $C>0$ such that if $A$ and $\delta$ satisfy \eqref{E:parameter-1}, \eqref{E:parameter-2},  \eqref{parameter-set-2}, \eqref{E:parameter-5},  and \eqref{E:parameter-3}, we have $Dh^{cs}(y, 0, 0, 0) =0$.
\end{lemma}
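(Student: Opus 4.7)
The plan is to exploit the fixed-point characterization $Dh^{cs} = \CH^{cs}$ established in the $C^1$ argument above, where $\CH^{cs}$ is the unique fixed point of $\CT_1$ in $Y_1(\mu)$. I will show that $(\CT_1 \CH)(y, 0, 0) = 0$ for \emph{every} $\CH \in Y_1(\mu)$; specializing to $\CH = \CH^{cs}$ then yields $Dh^{cs}(y, 0, 0) = 0$.

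The key preliminary observation is that $\bar x^{cs} = (y, 0, 0)$ is a stationary solution of \eqref{lp} with $h = h^{cs}$. Indeed, $h^{cs}(y, 0, 0) = 0$ by the definition of $\Gamma_{\mu, \delta}$, and the right-hand side of \eqref{lp} evaluated at $\bigl((y,0,0), 0\bigr)$ vanishes because $A^{cs}(y, 0)$ applied to $(y, 0, 0)$ gives $0$ and $\wt G^{cs}\bigl((y,0,0), 0\bigr) = 0$. Hence $\Psi(t, \bar x^{cs}) \equiv \bar x^{cs}$ and $h^{cs}\bigl(\Psi(t, \bar x^{cs})\bigr) \equiv 0$ for all $t \ge 0$.

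It therefore suffices to show that $D_{x^{cs}}\wt G^+$ and $D_{a^+}\wt G^+$ both vanish at $\bigl((y,0,0),0\bigr)$. This rests on the fact, stated immediately after \eqref{apm}, that $G^{+,-,T}$ are at least quadratic in $(a^+, a^-, V^e)$: for a smooth function $F(y, z)$ with $F = O(|z|^2)$ as $z \to 0$, every first-order partial derivative in either $y$ or $z$ vanishes at $z = 0$. Since $\gamma \equiv 1$ on $[-1, 1]$, the cutoff $\gamma_\delta$ is identically $1$ on a neighborhood of the origin, so $\wt G^+ = G^+$ there and the vanishing of the partial derivatives is preserved.

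Combining these two observations with the defining formula \eqref{E:CT1} of $\CT_1$: along the stationary trajectory both factors $D_{x^{cs}}\wt G^+(\Psi, h^{cs}(\Psi))$ and $D_{a^+}\wt G^+(\Psi, h^{cs}(\Psi))$ vanish identically in $t$, and thus the integrand in $(\CT_1 \CH)(y, 0, 0)\wt W$ is zero regardless of $\CH$ and $\wt W$. Taking $\CH = \CH^{cs} = Dh^{cs}$ gives the desired tangency. I do not foresee any substantive obstacle; the entire argument reduces to the at-least-quadratic vanishing of the modified nonlinearity at the equilibrium, which is built into the construction in Section \ref{derivationofeqns}.
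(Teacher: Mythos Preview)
Your proof is correct and follows essentially the same approach as the paper: both arguments observe that $\Psi(t,(y,0,0))\equiv(y,0,0)$, use the fact that $D\wt G^+$ vanishes at $((y,0,0),0)$ because $G^+$ is at least quadratic in $(a^+,a^-,V^e)$, and conclude that $(\CT_1\CH)(y,0,0)=0$ for every $\CH\in Y_1(\mu)$, whence $Dh^{cs}(y,0,0)=0$. Your write-up is somewhat more detailed in justifying why the cutoff does not interfere near the origin, but the structure is identical.
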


\begin{proof}
Observe that \eqref{lp} and the definition of $\wt G^{cs}$ implies $\Psi\big(t, (y, 0,0)\big) =(y, 0, 0)$ for all $t\ge 0$. 
For any $\CH \in Y_1$, \eqref{newh}, the fact $D \wt G^+(y,0,0,0)=0$, and the above observation imply $\CT_1 (\CH) (y,0,0, 0) =0$. Therefore $Dh^{cs}(y, 0, 0, 0) =0$ at any $y \in \R$, which implies that at any solitary wave on $\CM_c$, the center-stable manifold is tangent to the center-stable subspace.
\end{proof}

\section{Local Dynamics near solitary waves}
In this section, we study the local dynamics near solitary waves based on local invariant manifolds. We will prove: (i) the center-stable manifold repels nearby orbits in positive time and attracts nearby orbits in negative time; (ii) on the center-stable manifold, center manifold attracts nearby orbits in positive time; and (iii) the orbital stability on center manifolds. Various norms in the below are defined in Sections \ref{LinearAnalysis} and \ref{construction}. Even though we are still working with the modified system \eqref{E:cut-cs} and \eqref{E:cut-u}, by taking $\delta>0$ much smaller than the one in the cut-off, all the results valid in a $C\delta$-neighborhood in this section hold for the original gKDV equation.

\subsection{Dynamics near the center-stable and center-unstable manifolds}\label{classification}
In this  subsection, we study the local dynamics for initial data near the center-stable manifold. 

\begin{proposition}\label{offcs}
Let $U(t)=Em^\perp\big(x^{cs}(t),a^+(t)\big)\in Em^\perp \left(B^1(\delta)\oplus B^1(\delta)\oplus \CX_c^e(\delta)\right)$ be a solution to \eqref{eqintf}  for $t\in [0,T]$ with the initial data $U(0)=Em^\perp(\bar x^{cs}, \bar{a}^{+})$. 
 
We have 
$$\left\vert a^{+}(t)-h^{cs}\big(x^{cs}(t)\big)\right\vert \ge e^{(\lambda_{c}-C\delta) t}\left\vert \bar{a}^{+}-h^{cs}(\bar x^{cs})\right\vert \qquad\forall t\in [0,T],$$
\end{proposition}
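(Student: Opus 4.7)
The plan is to set $b(t) := a^+(t) - h^{cs}\big(x^{cs}(t)\big)$ and derive an integral identity
\begin{equation*}
b(t) = e^{\lambda_c t}b(0) + \int_0^t e^{\lambda_c(t-s)} R(s)\, ds,\qquad |R(s)|\le C\delta\, |b(s)|,
\end{equation*}
from which the conclusion follows by a reverse Gr\"onwall argument applied to $\beta(t) := e^{-\lambda_c t}b(t)$: differentiating gives $|\dot\beta(t)| \le C\delta|\beta(t)|$, so when $b(0)\ne 0$ the function $\beta$ cannot cross zero, and $(|\beta|)'\ge -C\delta|\beta|$ yields $|\beta(t)|\ge e^{-C\delta t}|\beta(0)|$, i.e.\ $|b(t)|\ge e^{(\lambda_c - C\delta)t}|b(0)|$ (the case $b(0)=0$ is trivial).

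The $a^+$-side comes directly from \eqref{E:cut-u} via Duhamel. For $h^{cs}(x^{cs}(t))$, I use the flow $\Psi(\tau,x)$ on the extended manifold (solution of \eqref{lp} with $h=h^{cs}$), which by \eqref{E:fix-p-1} satisfies
\begin{equation*}
h^{cs}\big(\Psi(\tau,x)\big) = e^{\lambda_c\tau}h^{cs}(x) + \int_0^\tau e^{\lambda_c(\tau-s)}\wt G^+\big(\Psi(s,x), h^{cs}(\Psi(s,x))\big)\, ds.
\end{equation*}
Along a partition $0=t_0<\cdots<t_N=t$ with vanishing mesh, I telescope $e^{-\lambda_c t_k}h^{cs}(x^{cs}(t_k))$ by inserting the intermediate value $h^{cs}\big(\Psi(t_{k+1}-t_k, x^{cs}(t_k))\big)$ at each step. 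The semigroup identity handles the ``flow'' piece; the remainder per step is bounded by $\mu\,\Vert x^{cs}(t_{k+1}) - \Psi(t_{k+1}-t_k, x^{cs}(t_k))\Vert_{H^1,A}$. Since the two trajectories share the initial value $x^{cs}(t_k)$ and have $a^+$-inputs differing pointwise by $|b|+\mu\Vert\cdot\Vert_{H^1,A}$, Lemma \ref{L:WP1}(2) and the space-time smoothing estimates of Lemma \ref{aprior3} bound the difference by a quantity whose summed contribution vanishes with the mesh, giving
\begin{equation*}
h^{cs}(x^{cs}(t)) = e^{\lambda_c t}h^{cs}(\bar x^{cs}) + \int_0^t e^{\lambda_c(t-s)}\big[\wt G^+(x^{cs}(s),h^{cs}(x^{cs}(s))) + E(s)\big]\,ds,
\end{equation*}
with $|E(s)|\le C\delta|b(s)|$. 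Subtracting this from the $a^+$ identity and using the Lipschitz bound $|\wt G^+(x^{cs},a^+) - \wt G^+(x^{cs},h^{cs}(x^{cs}))|\le C\delta|b|$ from \eqref{lipG} produces the desired integral equation for $b$.

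The main obstacle is the comparison step. A naive chain rule $\tfrac{d}{dt}h^{cs}(x^{cs}(t)) = Dh^{cs}(x^{cs})\,\dot x^{cs}$ is not justified, because $\p_t V$ lies only in $H^{-2}$ through the $JLV$ term, whereas $Dh^{cs}$ is bounded only on $H^1$. The workaround is to compare two solutions sharing an initial value on each subinterval, so that the derivative-losing $JLV$ part cancels to leading order and the remaining difference involves only the smooth bundle operator $\CF$ and the (Lipschitz) cut-off nonlinearities---precisely the setting in which the smoothing estimates of Section \ref{LinearAnalysis} and the quantitative a priori bounds of Lemma \ref{aprior3} apply, allowing the sum of telescoping errors to be absorbed into $\int_0^t e^{\lambda_c(t-s)} C\delta|b(s)|\,ds$.
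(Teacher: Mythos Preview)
Your telescoping argument has a genuine gap. When you compare $x^{cs}(t_{k+1})$ with $\Psi(t_{k+1}-t_k, x^{cs}(t_k))$ (two solutions of \eqref{E:cut-cs} sharing the initial value $x^{cs}(t_k)$ but driven by $a^+$-inputs differing by $O(|b|)$), the lemmas you cite do \emph{not} produce a per-step error of order $O(\Delta t_k)$. Tracing through Lemma \ref{aprior3} (the $V$-difference estimates carry a leading $T^{1/2}$ prefactor inherited from Lemma \ref{bilinear}) or the proof of Lemma \ref{L:WP1}(2) (see \eqref{E:temp-3} with $l(t_0)=0$), one only obtains
\[
\Vert x^{cs}(t_{k+1}) - \Psi(\Delta t_k, x^{cs}(t_k))\Vert_{H^1,A}\;\le\; C\eta\,(\Delta t_k)^{1/2}\sup_{[t_k,t_{k+1}]}|b|.
\]
Summing $N$ such terms over a uniform partition of $[0,t]$ gives $C\eta\,N^{1/2}t^{1/2}\sup|b|\to\infty$ as $N\to\infty$, so the summed contribution does not vanish with the mesh and cannot be absorbed into $\int_0^t e^{\lambda_c(t-s)}C\delta|b(s)|\,ds$. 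The $(\Delta t)^{1/2}$ scaling is intrinsic to the derivative nonlinearity: the term $\wt G^e$ is controlled in $L^1_tH^1_x$ only via the bilinear smoothing estimate, and Lemma \ref{lipGe} reflects exactly this.

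The paper avoids this by working on \emph{unit} intervals rather than a vanishing mesh. On each $[t_0,t_0+1]$ it launches a center-stable trajectory $\wt x^{cs}$ from $x^{cs}(t_0)$ and first proves a monotonicity statement (the ``$T_1=1$'' argument): $|a^+(t_0+\cdot)-\wt a^+(t_0+\cdot)|$ is strictly increasing on $[0,1]$. This guarantees the $\kappa_0$ in Lemma \ref{L:WP1}(2) is achieved at the right endpoint, so $\Vert x^{cs}-\wt x^{cs}\Vert_{H^1,A}\le C\eta\,|a^+-\wt a^+|$ holds pointwise on the whole unit interval. A standard Gr\"onwall then gives the one-step gain $e^{\lambda_c-C\delta}$, and the conversion back from $|a^+-\wt a^+|$ to $|a^+-h^{cs}(x^{cs})|$ uses the improved bound $|Dh^{cs}|\le C\delta$ (from Lemma \ref{L:tangency}) together with the $C\eta$ control on $\Vert x^{cs}-\wt x^{cs}\Vert$, costing only a factor $(1-C\delta\eta)$ per step, which is absorbed into the exponent upon iteration. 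The Remark immediately following the proposition in the paper addresses precisely why a direct differentiation of $h^{cs}(x^{cs}(t))$---which your telescoping is a discretized surrogate for---runs into trouble with the available $O(T^{1/2})$ estimates.
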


The above inequality indicates that the center-stable manifold repels nearby orbits forward in time and, as \eqref{eqintf} is autonomous, it also attracts nearby orbits backward in time. 

\begin{proof}
Without loss of generality, we may assume $\bar a^+ \ne h^{cs}(\bar x^{cs})$. Let $\wt x^{cs}(t)$ be the solution to $\eqref{lp}$ with $h=h^{cs}$ and $\wt x^{cs}(t_0)=x^{cs}(t_0)$, and let $\wt{a}^+= h^{cs}(\wt x^{cs})$. By the invariance of the center-stable manifold, we have 
\begin{equation}\label{diffofa}
\partial_{t}\left(a^{+}-\wt{a}^+\right) =\lambda_{c}\left( a^{+}-\wt{a}^+\right)+
\widetilde{G}^{+}(x^{cs},a^+)-\widetilde{G}^{+}(\wt x^{cs}, \wt{a}^+). 
\end{equation}
Since \eqref{lipG} yields 
\begin{equation}\label{diffofG}
\begin{split}
\left\vert \widetilde{G}^{+}(x^{cs},a^+)-\widetilde{G}^{+}(\wt x^{cs}, \wt{a}^+)\right\vert
\le C\delta \left(\Vert x^{cs}-\wt x^{cs}\Vert_{H^1,A} + \vert a^{+}-\wt{a}^+\vert \right),
\end{split}
\end{equation}

and $x^{cs}(t_0)-\wt x^{cs}(t_0)=0$, \eqref{diffofa} and \eqref{diffofG} yield $\partial_{t} | a^{+}-\wt{a}^+|_{t=t_0}> 0$. Let 
$$T_{1}:=\sup\left\{t\in[0, 1]: \partial_{t}|a^{+}-\wt{a}^+|> 0 \;\text{in}\; [t_{0},t_{0}+t) \right\}.$$ 

We show $T_{1}=1$ in the below. Suppose otherwise $T_{1}<1$, by its definition, one has 
$$\partial_{t}|a^{+}-\wt{a}^+|_{t=t_{0}+T_{1}}=0, \quad \Vert a^{+}-\wt{a}^+\Vert_{L^{\infty}_{[t_{0},t_{0}+T_1]}} = \vert a^{+}(t_{0}+T_1)-\wt{a}^+(t_{0}+T_1)\vert.$$ 
By Lemma \ref{L:WP1}, we have 
\[
\Vert x^{cs}(t_{0}+T_{1})-\wt x^{cs}(t_{0}+T_{1})\Vert_{H^1, A} \le  C \eta | a^{+}(t_{0}+T_{1})-\wt{a}^+(t_{0}+T_{1})|.
\]
 It follows from \eqref{diffofa} and \eqref{diffofG}
$$\partial_{t}| a^{+}-\wt{a}^+ |_{t=t_{0}+T_{1}}>0,$$
which is a contradiction to the definition of $T_{1}$ and $T_1<1$. Therefore, $T_1=1$ and for $t\in [0, 1]$, $\Vert a^+ -\wt a^+\Vert_{L_{[t_0, t_0+t]}^\infty}$ is always achieved at $t_0+t$. Again from Lemma \ref{L:WP1}, we have that, for any $t\in [0,1]$,
\begin{equation}\label{diffoforbits2}
\Vert x^{cs}(t_{0}+t)-\wt x^{cs}(t_{0}+t)\Vert_{H^1, A} \le  C \eta | a^{+}(t_{0}+t)-\wt{a}^+(t_{0}+t)|.
\end{equation}

By applying \eqref{diffofG} and the above inequality to \eqref{diffofa}, we have 
\[
|\partial_{t}(a^{+}-\wt{a}^+) - \lambda_{c}( a^{+}-\wt{a}^+)| \le C \delta | a^{+}-\wt{a}^+|, \quad t\in [0, 1]. 
\]

Then by the Gronwall inequality, we obtain 
$$\vert a^{+}(t_{0}+t)-\wt{a}^+(t_{0}+t)\vert \ge e^{\left(\lambda_{c}-C\delta\right)t}\vert a^{+}(t_{0})-\wt{a}^+(t_{0})\vert, \quad t\in [0, 1].$$
Since Lemma \ref{L:tangency} yields 
\begin{equation} \label{E:D2h}
D h^{cs} \le C\delta \quad \text{ in} \quad Em^\perp \big(B^1(\delta)\oplus B^1(\delta)\oplus \CX_c^e(\delta)\big),
\end{equation}
along with \eqref{diffoforbits2}, the inequality implies, for $t\in [0, 1]$, 

\begin{equation}
\begin{split}
& \left\vert a^{+}(t_{0}+t)-h^{cs}\big(x^{cs}(t_{0}+t)\big)\right\vert \ge (1-C \delta \eta) \vert a^{+}(t_{0}+t)-\wt{a}^+(t_{0}+t)\vert 
\\
\ge &(1-C \delta \eta) e^{(\lambda_{c}-C\delta)t}\vert a^{+}(t_{0})-\wt{a}^+(t_{0})\vert \ge e^{\lambda_{c}-3C\delta}\left\vert a^{+}(t_0)-h^{cs}(x^{cs}(t_0))\right\vert
\end{split}
\end{equation}
Iterating the above estimate, we complete the proof. 
\end{proof} 

\begin{remark}
The exponential type estimate in Proposition \ref{classification} can also be obtained by a more direct approach through considering $\p_t \big( a^{+}-h^{cs}(x^{cs}) \big)$ and using the invariance of $h^{cs}$. Since $\p_t x^{cs} \in H^{-2}$ and $Dh^{cs}$ acts only on $H^1$, this procedure may be carried out for $x^{cs} \in H^4$ and the estimate for $x^{cs} \in H^1$ follows from the continuous dependence in $H^1$ of the solutions on their initial data and the continuity of $h^{cs}$. However, due to the lack of $O(T)$ estimate on $D\wt G^{cs}$ in Lemma \ref{lipGe}, one would only obtain a lower bound in the form of $(1-C\delta) e^{(\lambda_c -C \delta)t}$ and it is not easy to get rid of the factor $1-C\delta$. 
\end{remark}

For any point $U=Em^\perp(y,a^{+},a^{-},V^{e})$ in a small neighborhood of $\CM_c$, the total of the norms $|a^+|+|a^-|+\Vert V^e\Vert_{H^1}$ of its transversal 
components is equivalent to its distance $dist(U, \CM_c)$ to $\CM_c$, where 
\begin{equation} \label{E:dist} 
dist(U, K) = \inf_{\wt U \in K} \Vert U-\wt U\Vert_{H^{1}}
\end{equation} 
for any subset $K \subset H^1$. See Remark \ref{R:metric}. The above Proposition yields the nonlinear instability of the traveling waves with an exit time estimate. 

\begin{corollary}\label{nics}

For any 
$U(0)\notin \CW^{cs}(\mathcal{M}_c)$, 
$\exists T^{\ast}>0
$ such that $$dist\big(U(T^*), \CM_c\big)\ge \delta.$$
\end{corollary}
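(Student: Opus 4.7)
The plan is to argue by contradiction, exploiting the exponential separation from the center-stable manifold provided by Proposition \ref{offcs}. The key observation is that, inside the coordinate tube around $\CM_c$, the quantity $|a^+(t)-h^{cs}(x^{cs}(t))|$ grows at rate $\lambda_c-C\delta$ as long as the orbit remains in the tube, while it is automatically bounded by $2\delta$ there; so the orbit must leave the tube in finite time.

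First I would dispose of the trivial case $dist(U(0),\CM_c)\ge \delta$ by taking $T^*=0$. In the remaining case $dist(U(0),\CM_c)<\delta$, Remark \ref{R:metric} gives the equivalence between the $H^1$ distance to $\CM_c$ and the total of the transversal coordinate norms $|a^+|+|a^-|+\Vert V^e\Vert_{H^1}$; after possibly shrinking the working radius $\delta$, this lets me assume that $dist(U,\CM_c)<\delta$ puts $U$ inside the chart $Em^\perp\bigl(B^1(\delta)\oplus B^1(\delta)\oplus\CX_c^e(\delta)\bigr)$. Writing $U(0)=Em^\perp(\bar y,\bar a^+,\bar a^-,\bar V^e)$, the hypothesis $U(0)\notin\CW^{cs}(\CM_c)$ yields $\epsilon_0:=|\bar a^+-h^{cs}(\bar y,\bar a^-,\bar V^e)|>0$.

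Next I would set $T^*:=\sup\{T\ge 0:dist(U(t),\CM_c)<\delta\text{ for all }t\in[0,T]\}$ and suppose, for contradiction, that $T^*=+\infty$. Then $U(t)=Em^\perp(x^{cs}(t),a^+(t))$ lies in the chart for every $t\ge 0$, so Proposition \ref{offcs} applies on each interval $[0,T]$ and gives
$$|a^+(t)-h^{cs}(x^{cs}(t))|\ge e^{(\lambda_c-C\delta)t}\,\epsilon_0, \qquad t\ge 0.$$
On the other hand, since $h^{cs}\in\Gamma_{\mu,\delta}$ and $|a^+(t)|\le\delta$ in the tube, the left-hand side is at most $2\delta$, contradicting the exponential blow-up once $\delta$ is chosen small enough that $\lambda_c-C\delta>0$. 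Hence $T^*$ is finite, and continuity of $t\mapsto dist(U(t),\CM_c)$ yields $dist(U(T^*),\CM_c)\ge\delta$.

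The main (essentially cosmetic) obstacle is reconciling the several uses of $\delta$: the radius appearing in the conclusion of the corollary, the radius of the bundle coordinate tube, and the size constraint in Proposition \ref{offcs} and in $\Gamma_{\mu,\delta}$. I would handle this by fixing once and for all a working $\delta$ small enough that the equivalence constants of Remark \ref{R:metric}, the smallness assumptions \eqref{E:parameter-1}--\eqref{E:parameter-3}, and the positivity $\lambda_c-C\delta>0$ all hold simultaneously, so that the three meanings collapse, up to harmless constants, into a single $\delta$.
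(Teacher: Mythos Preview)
Your argument is correct and is exactly the implicit reasoning behind the corollary; the paper does not give a separate proof but simply records it as an immediate consequence of Proposition~\ref{offcs}. Your contradiction via the bound $|a^+(t)-h^{cs}(x^{cs}(t))|\le 2\delta$ against the exponential lower bound is precisely the intended mechanism, and your remark about reconciling the various $\delta$'s is a fair (and harmless) bookkeeping point.
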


Parallel to the center-stable case, the center-unstable manifold attracts nearby orbits exponentially as $t \to +\infty$. 

\begin{proposition}\label{offcu}
Let $U(t)=Em^\perp \big(x^{cu}(t),a^-(t)\big)\in Em^\perp \left(B^1(\delta)\oplus B^1(\delta)\oplus \CX_c^e(\delta)\right)$ be a solution to \eqref{eqintf}  for $t\in [0,T]$ with the initial data $U(0)=Em^\perp(\bar x^{cu}, \bar{a}^{-})$. 
We have  
$$\left\vert a^{-}(t)-h^{cu}\big(x^{cu}(t)\big)\right\vert \le e^{-(\lambda_{c} -C\delta) t}\left\vert \bar{a}^{-}-h^{cu}(\bar x^{cu})\right\vert \qquad\forall t\in [0,T].$$
Moreover,  
for any $U(0)\notin \CW^{cu}(\mathcal{M}_c)$, 
$\exists T^{\ast}<0
$ such that $$dist(U(T^*), \CM_c)\ge \delta.$$
\end{proposition}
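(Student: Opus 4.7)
The plan is to mirror the proof of Proposition \ref{offcs}, now using the eigenvalue $-\lambda_c$ in the $a^-$ equation \eqref{E:cut-u-1} to produce exponential attraction toward $\CW^{cu}(\CM_c)$ in forward time. The cu-analogs of Lemmas \ref{L:WP1} and \ref{L:tangency} hold by the same arguments: the linear structure is symmetric under $t \mapsto -t$, and within the cut-off neighborhood the unstable direction $a^+$ now carried in $x^{cu}$ produces the same type of $Ce^{C\eta t}$ Lipschitz bounds. These will be invoked without re-derivation.

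For each $t_0 \in [0, T-1]$, let $\tilde x^{cu}(t)$ solve the cu-analog of \eqref{lp} with $h = h^{cu}$ and $\tilde x^{cu}(t_0) = x^{cu}(t_0)$, and set $\tilde a^-(t) = h^{cu}\bigl(\tilde x^{cu}(t)\bigr)$. By the fixed-point definition of $h^{cu}$, the pair $(\tilde x^{cu}, \tilde a^-)$ lies on $\CW^{cu}(\CM_c)$ and solves \eqref{E:cut-2}. Subtracting the $a^-$ equations and applying \eqref{lipG} yields
\begin{equation*}
\bigl|\partial_t(a^- - \tilde a^-) + \lambda_c(a^- - \tilde a^-)\bigr| \le C\delta\bigl(\Vert x^{cu} - \tilde x^{cu}\Vert_{H^1, A} + |a^- - \tilde a^-|\bigr).
\end{equation*}
Since $x^{cu}(t_0) = \tilde x^{cu}(t_0)$, $\partial_t|a^- - \tilde a^-|$ is strictly negative at $t_0$ whenever $a^-(t_0) \ne \tilde a^-(t_0)$. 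Define
\[
T_1 := \sup\bigl\{t \in [0, 1] : \partial_s|a^- - \tilde a^-| < 0 \text{ on } [t_0, t_0 + t)\bigr\}.
\]
If $T_1 < 1$, then $|a^- - \tilde a^-|$ attains its maximum on $[t_0, t_0+T_1]$ at $t_0$, and the cu-analog of Lemma \ref{L:WP1}(2) (with $\kappa_0 = |a^-(t_0) - \tilde a^-(t_0)|$, $\kappa_1 = 0$, and vanishing initial diff) gives $\Vert x^{cu} - \tilde x^{cu}\Vert_{H^1, A}(t_0 + T_1) \le C\eta|a^-(t_0) - \tilde a^-(t_0)|$. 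A standard Gronwall on the unit interval produces a matching lower bound $|a^- - \tilde a^-|(t_0+T_1) \gtrsim |a^-(t_0) - \tilde a^-(t_0)|$; substituting both into the ODE at $t_0+T_1$, where $\partial_t(a^- - \tilde a^-) = 0$, forces $\lambda_c \le C\delta(1 + C\eta)$, contradicting the smallness of $\delta$. Hence $T_1 = 1$, and Gronwall on $[t_0, t_0+1]$ yields $|a^- - \tilde a^-|(t_0+1) \le e^{-(\lambda_c - C\delta)}|a^-(t_0) - \tilde a^-(t_0)|$. Using $|Dh^{cu}| \le C\delta$ near $\CM_c$ (the cu-analog of Lemma \ref{L:tangency}) to convert $\tilde a^-$ back to $h^{cu}(x^{cu})$ and iterating over unit intervals yields the claimed exponential decay estimate.

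The second statement is the same argument read backward in time: if $U(0) \notin \CW^{cu}(\CM_c)$, then $|a^-(0) - h^{cu}(x^{cu}(0))|$ grows like $e^{(\lambda_c - C\delta)|t|}$ as $t$ decreases, so by Remark \ref{R:metric} the orbit must exit the $\delta$-neighborhood of $\CM_c$ at some finite $T^* < 0$, exactly as in Corollary \ref{nics}. The main technical obstacle is the monotonicity-plus-contradiction bootstrapping on the unit time window, which requires the cu-analog of Lemma \ref{L:WP1}(2) to couple the control of $\Vert x^{cu} - \tilde x^{cu}\Vert$ to $|a^- - \tilde a^-|$; once that ingredient is in place the remaining Gronwall estimates are routine.
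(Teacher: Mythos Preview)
Your argument is correct and mirrors the paper's proof of Proposition \ref{offcs}, which is precisely what the paper intends: no separate proof of Proposition \ref{offcu} is given there, it is simply declared parallel to the center-stable case. The one extra ingredient you correctly supply---the Gronwall lower bound on $|a^- - \tilde a^-|$ needed because monotonicity now runs the opposite way and the sup is achieved at $t_0$ rather than $t_0+T_1$---is the only genuine asymmetry between the two arguments, and your handling of it is sound; just note that the forward-time $Ce^{C\eta t}$ Lipschitz bound on $x^{cu}$ you quote is only needed (and only literally true with that exponent) on unit intervals, since the $a^+$-component in $x^{cu}$ would otherwise contribute $e^{\lambda_c t}$ growth.
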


Since the center manifold is the intersection of the center-stable and center-unstable manifolds, the above theorems imply 

\begin{corollary}\label{nic}
For any $U(0)\notin \CW^{c}(\mathcal{M}_c)$, 
$\exists T^{\ast} \in \mathbb{R}
$ such that $$dist\big(U(t), \CM_c\big)\ge \delta.$$
\end{corollary}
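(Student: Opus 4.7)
The plan is to reduce the statement to the two previously established one-sided exit results, Corollary \ref{nics} (center-stable side) and Proposition \ref{offcu} (center-unstable side), using the defining identity $\CW^c(\CM_c) = \CW^{cs}(\CM_c) \cap \CW^{cu}(\CM_c)$. No new estimates are needed; the work is purely set-theoretic.

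First I would observe that, since $\CW^c(\CM_c)$ is by construction the intersection of the local center-stable and center-unstable manifolds, the hypothesis $U(0) \notin \CW^c(\CM_c)$ splits into two cases: either $U(0) \notin \CW^{cs}(\CM_c)$, or $U(0) \notin \CW^{cu}(\CM_c)$ (or both). These cases are handled by separate applications of the preceding corollary/proposition.

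In the first case, Corollary \ref{nics} directly yields some $T^* > 0$ with $dist(U(T^*), \CM_c) \ge \delta$. In the second case, Proposition \ref{offcu} yields some $T^* < 0$ with the same conclusion. Either way, we obtain the desired $T^* \in \R$.

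The only subtlety to note is that both Corollary \ref{nics} and Proposition \ref{offcu} are stated in the regime where the orbit is represented in the bundle coordinates of Section \ref{bundlecoordinates} inside $Em^\perp(B^1(\delta)\oplus B^1(\delta)\oplus \CX_c^e(\delta))$. Thus if the orbit exits this neighborhood before the putative exit time, the statement is automatic; otherwise, the exponential separation estimates in the proofs of those results force the exit. Hence no further obstacle arises, and the corollary follows immediately from the disjunction above.
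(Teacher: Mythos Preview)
Your proposal is correct and matches the paper's own reasoning: the paper simply notes that since $\CW^c(\CM_c) = \CW^{cs}(\CM_c) \cap \CW^{cu}(\CM_c)$, the corollary follows immediately from Corollary~\ref{nics} and Proposition~\ref{offcu}, which is precisely the case split you outline.
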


\begin{remark}
Corollary \ref{nic} along with the above exponential estimates indicates that the nonlinear instability of the solitary waves for the supercritical gKDV equations is generic in the sense that if  initial data is not on the co-dim $2$ center manifold, then the flow will leave a neighborhood of the soliton manifold exponentially fast at least in one time direction. This result is stronger than the classical nonlinear instability result 
of the existence of special initial data in any neighborhood of the solitary waves 
whose orbit leaves a neighborhood of the soliton manifold. 
\end{remark}

\subsection{Dynamics inside the center-stable and center-unstable manifolds and the orbital stability inside  center-manifolds}\label{orbital stability}

Based on the exponential estimates in the directions transversal to the center-stable and center-unstable manifolds obtained in Subsection \ref{classification} and the energy conservation, we shall prove the exponential stability of the center manifold inside the center-stable manifold and the orbital stability of the traveling waves inside the center manifold. Recall that the center manifold $\CW^c(\CM_c)$ is the graph $\{a^\pm = h^\pm (y, V^e)\}$ of $h^c = (h^+, h^-)$. Clearly, 
\begin{align} 
&h^+=h^{cs}(y,h^-,V^e), \qquad h^-=h^{cu}(y,h^+,V^e)\label{E:hpm}\\
&h^\pm(y, 0) = 0, \quad D h^\pm (y, 0) =0.  \label{E:hpm1}
\end{align}

\begin{proposition}\label{attract-center}
There exists $C_0\ge 1$ such that the following hold. 
\begin{enumerate}
\item Let $U(t)=Em^\perp(y,a^+,a^-,V^e)(t)$ for $t\ge 0$ be a solution to \eqref{eqintf}  with the initial data 
\[
U(0) = Em^\perp\left(\bar{y},\bar{a}^{+},\bar{a}^{-},\bar{V}^{e}\right)\in \CW^{cs}(\CM_c), \quad \bar V^e \in X_{c, \bar y}^e, \;  |\bar a^\pm|, \Vert \bar V^e \Vert_{H^1} \le C_0^{-1} \delta, 
\]
then 
we have 
$$\left\vert a^{-} -h^- (y,V^e)\right\vert  \le (1+C\delta^2)e^{-(\lambda_{c}-C\delta) t}\left\vert \bar a^- -h^- (\bar y,\bar V^e) \right\vert, \qquad\forall t\ge 0$$
and 
\[
\Vert V^e\Vert_{H^1}^2 \le C (\Vert \bar V^e \Vert_{H^1}^2 + |\bar a^- - h^-(\bar y, \bar V^e)|^3).
\]
\item Let $U(t)=Em^\perp(y,a^+,a^-,V^e)(t)$ for $t\le 0$ be a solution to \eqref{eqintf}  with the initial data 
\[
U(0) = Em^\perp\left(\bar{y},\bar{a}^{+},\bar{a}^{-},\bar{V}^{e}\right)\in \CW^{cu}(\CM_c), \quad \bar V^e \in X_{c, \bar y}^e, \;  |\bar a^\pm|, \Vert \bar V^e \Vert_{H^1} \le C_0^{-1} \delta, 
\]
then 
we have 
$$\left\vert a^{+} -h^+ (y,V^e)\right\vert  \le (1+C\delta^2)e^{-(\lambda_{c}-C\delta) t}\left\vert \bar a^+ -h^+ (\bar y,\bar V^e) \right\vert, \qquad\forall t\ge 0$$
and 
\[
\Vert V^e\Vert_{H^1}^2 \le C (\Vert \bar V^e \Vert_{H^1}^2 + |\bar a^+ - h^+(\bar y, \bar V^e)|^3).
\]
\end{enumerate}
\end{proposition}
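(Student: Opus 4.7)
We treat part~(1); part~(2) follows by applying the same argument to the time-reversed flow, using $\CW^{cu}(\CM_c)$ and Proposition~\ref{offcs} in place of $\CW^{cs}(\CM_c)$ and Proposition~\ref{offcu}. The plan has two steps. First, extract the exponential decay of $|a^- - h^-(y, V^e)|$ from Proposition~\ref{offcu} by comparing $h^-$ and $h^{cu}$ through the fixed-point identities \eqref{E:hpm}--\eqref{E:hpm1}. Second, bound $\|V^e\|_{H^1}^2$ using the conserved functional $H(U) := E(U) - cP(U) - (E - cP)(Q_c)$ and the quadratic tangency of $h^{cs}$ at $\CM_c$ from Lemma~\ref{L:tangency}.

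\textbf{Step 1 (exponential decay).} Since $U(0) \in \CW^{cs}(\CM_c)$, Theorem~\ref{centerstable} gives $a^+(t) = h^{cs}(y(t), a^-(t), V^e(t))$ for all $t \ge 0$ (with the orbit staying in a $C\delta$-neighborhood of $\CM_c$ by choosing $C_0$ large), so Proposition~\ref{offcu} applies along the orbit and yields
\[
|a^-(t) - h^{cu}(y(t), a^+(t), V^e(t))| \le e^{-(\lambda_c - C\delta) t}\, |\bar a^- - h^{cu}(\bar y, \bar a^+, \bar V^e)|.
\]
Using $h^- = h^{cu}(y, h^+, V^e)$ together with $a^+ = h^{cs}(y, a^-, V^e)$ and $h^+ = h^{cs}(y, h^-, V^e)$, we write
\[
a^- - h^-(y, V^e) = \bigl(a^- - h^{cu}(y, a^+, V^e)\bigr) + \bigl(h^{cu}(y, a^+, V^e) - h^{cu}(y, h^+, V^e)\bigr).
\]
Lemma~\ref{L:tangency} gives that the local Lipschitz constants of $h^{cs}$ and $h^{cu}$ in the $C\delta$-neighborhood of $\CM_c$ are $O(\delta)$. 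The second term is therefore bounded by $C\delta \,|a^+ - h^+| = C\delta\,|h^{cs}(y, a^-, V^e) - h^{cs}(y, h^-, V^e)| \le C\delta^2 |a^- - h^-|$. Absorbing this factor and reversing the inequality yields
\[
(1 - C\delta^2)|a^- - h^-| \le |a^- - h^{cu}(y, a^+, V^e)| \le (1 + C\delta^2)|a^- - h^-|,
\]
which, combined with the Proposition~\ref{offcu} estimate, produces the stated bound with a $(1+C\delta^2)$ prefactor.

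\textbf{Step 2 (control of $\|V^e\|_{H^1}^2$).} Since $(E - cP)'(Q_c) = 0$ and $(E - cP)''(Q_c) = L_c$, Taylor expansion at $Q_c(\cdot + y(t))$ gives
\[
H(U) = \tfrac{1}{2}\langle L_{c,y} W, W\rangle + O(\|W\|_{H^1}^3), \qquad W = U - Q_c(\cdot + y).
\]
In the decomposition $W = a^+ V_c^+(\cdot + y) + a^- V_c^-(\cdot + y) + V^e$, the $L_{c,y}$-orthogonality of $V^e$ to $V_c^\pm$ together with $\langle L_{c,y} V_c^\pm, V_c^\pm\rangle = 0$ and $\langle L_{c,y} V_c^+, V_c^-\rangle = 1$ (see the proof of Lemma~\ref{de1}) reduce the quadratic form to $a^+ a^- + \tfrac{1}{2}\langle L_{c,y} V^e, V^e\rangle$. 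Conservation of $H$ then yields
\[
\tfrac{1}{2}\langle L_{c,y(t)} V^e(t), V^e(t)\rangle \le \tfrac{1}{2}\langle L_{c,\bar y} \bar V^e, \bar V^e\rangle + |\bar a^+ \bar a^-| + |a^+(t) a^-(t)| + C(\|W(t)\|^3 + \|\bar W\|^3).
\]
On $\CW^{cs}(\CM_c)$, Lemma~\ref{L:tangency} with the $C^2$ smoothness from Section~\ref{smoothness of invariant manifolds} gives $|h^{cs}(y, a^-, V^e)| \le C(|a^-|^2 + \|V^e\|_{H^1}^2)$, and analogously $|h^\pm(y, V^e)| \le C\|V^e\|_{H^1}^2$. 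Writing $a^- = h^-(y, V^e) + \alpha$ and applying the weighted Young inequality $\|V^e\|^2|\alpha| \le \tfrac{2}{3}\|V^e\|^3 + \tfrac{1}{3}|\alpha|^3 \le \tfrac{2\delta}{3}\|V^e\|^2 + \tfrac{1}{3}|\alpha|^3$ (and similarly for $\|V^e\||\alpha|^2$), one obtains
\[
|a^+ a^-| \le C\delta \|V^e\|_{H^1}^2 + C|\alpha|^3, \qquad \|W\|_{H^1}^3 \le C\delta \|V^e\|_{H^1}^2 + C|\alpha|^3,
\]
with the same bounds for the initial data. The coercivity $\langle L_{c,y} V^e, V^e\rangle \ge A_c \|V^e\|_{H^1}^2$ from Lemma~\ref{decompvb} then absorbs the $C\delta \|V^e\|^2$ terms for $\delta$ small, giving $\|V^e(t)\|_{H^1}^2 \le C(\|\bar V^e\|_{H^1}^2 + |\alpha(t)|^3 + |\bar\alpha|^3)$. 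Step~1 provides $|\alpha(t)| \le 2|\bar\alpha|$, which closes the bound.

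\textbf{Main obstacle.} The subtle point is getting the sharp cubic remainder $C|\alpha|^3$ rather than a quadratic-in-$\alpha$ or $O(\delta^2)$ residual in the bound on $a^+ a^-$; this is exactly what is needed so that (combined with the exponential decay $|\alpha(t)|^3 \lesssim |\bar\alpha|^3$) the coercive quadratic form dominates and yields the orbital stability on $\CW^c(\CM_c)$ in the later application. This sharp bound forces us to use both the linear tangency of $h^{cs}$ at $\CM_c$ (to write $|a^+| \lesssim |a^-|^2 + \|V^e\|^2$) and the quadratic vanishing of $h^-$ at $V^e = 0$ (so that $|a^-| \lesssim \|V^e\|^2 + |\alpha|$), and then carefully balance the resulting mixed monomials $\|V^e\|^{j}|\alpha|^{3-j}$ against $\delta \|V^e\|^2 + |\alpha|^3$ via weighted Young. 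An analogous double-tangency argument in Step~1 is what produces the $1 + C\delta^2$ prefactor rather than the weaker $1 + C\delta$.
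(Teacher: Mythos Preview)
Your proof is correct and follows essentially the same approach as the paper. One small correction: with the paper's conventions $E(u)=\int \tfrac12 u_x^2 - \tfrac1{k+1}u^{k+1}$ and $P(u)=\tfrac12\int u^2$, the conserved functional with critical point $Q_c$ and Hessian $L_{c,y}$ is $E+cP$, not $E-cP$; this is just a sign slip and changes nothing in the argument. The paper's Step~2 is organized slightly differently---it absorbs $a^+$ directly into the cubic remainder using $|a^+|=|h^{cs}(y,a^-,V^e)|=O(|a^-|^2+\|V^e\|^2)$, so the quadratic term reduces to $\tfrac12\langle L_{c,y}(V^e+a^-V_c^-),V^e+a^-V_c^-\rangle=\tfrac12\langle L_{c,y}V^e,V^e\rangle$ without ever isolating $a^+a^-$---but your explicit treatment of the $a^+a^-$ cross term via weighted Young's inequality is equally valid and arguably more transparent.
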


\begin{proof}
We will consider the center-stable case, while the other one can be proved similarly. 
Since $U(t) \in \CW^{cs}(\CM_c)$, \eqref{E:hpm} and \eqref{E:D2h} imply that, if $|a^\pm|, \Vert V^e\Vert_{H^1} <\delta$ on $[0, T]$, then for $t \in [0, T]$, 
$$
 \vert h^+(y, V^e) -a^+\vert = \vert h^{cs}(y,h^-,V^e)-h^{cs}(y,a^-,V^e)\vert\le C \delta \vert h^-(y, V^e)-a^- \vert,
$$
and 
\begin{equation*}
\begin{split}
& \vert \big(h^-(y, V^e) -a^-\big) - \big( h^{cu}(y,a^+,V^e)-a^- \big) \vert \le \vert h^{cu}(y,h^+,V^e)-h^{cu}(y,a^+,V^e)\vert \\
 \le & C\delta \vert h^+(y, V^e)-a^+ \vert
  \le  C\delta^2   \vert h^-(y, V^e)-a^- \vert.
\end{split}
\end{equation*}
Applying this inequality at $t \in [0, T]$ and then along with  Proposition \ref{offcu}, we have 
\begin{equation} \label{E:temp-4} \begin{split}
\vert h^-(y, V^e)-a^-\vert\le & (1+ C\delta^2) \vert a^--h^{cu}(y,a^+,V^e)\vert \\
\le& (1+ C\delta^2) e^{-(\lambda_c -C\delta)t} \vert \bar a^--h^{cu}(\bar y,\bar a^+, \bar V^e)|\\
\le & (1+ C\delta^2) e^{-(\lambda_c -C\delta)t} \vert \bar a^--h^- (\bar y, \bar V^e)|.
\end{split} \end{equation}

To estimate the bound on $\Vert V^e\Vert_{H^1}$ on $[0, T]$, let 
\[
\wt E(y,V^e)= (E+cP)\big(Em^\perp (y, a^+, a^-, V^e)\big).
\]
On the one hand, clearly for any $y\in \R$, it holds that 
\begin{align*} 
&(E+cP)'\left(Q_{c}(\cdot +y)\right)=0,\quad (E+cP)''\left(Q_{c}(\cdot +y)\right)=L_{c,y},\\
&h^{cs}(y, 0, 0)=0, \quad Dh^{cs}(y,0, 0).
\end{align*}
Therefore, due to the smoothness of $h^{cs}$, from Lemma \ref{decompvb} one has the following expansion for $[0, T]$, 
\[
\begin{split}
& \wt E\big(y, a^+, a^-, V^{e}\big)-\wt E(y,0, 0, 0)\\
= & \frac{1}{2}\langle L_{c,y} (V^{e} + a^- V_c^-), V^{e} + a^- V_c^-\rangle+ O(\Vert V^{e}(t)\Vert^{3}_{H^{1}} + |a^-|^3)\\
= & \frac{1}{2}\langle L_{c,y} V^{e}, V^{e} \rangle+ O\big(\Vert V^{e}(t)\Vert^{3}_{H^{1}} + |a^-|^3\big)\\
\ge & (1/C)\Vert V^{e} \Vert^{2}_{H^{1}}- C(\Vert V^{e}\Vert^{3}_{H^{1}} + |a^-|^3). 
\end{split}\]
On the other hand, by the conservation and the translation invariance of the energy-momentum functional, we have 
\[
\begin{split}
& \wt E\big((y, a^+, a^-, V^{e})(t)\big)-\wt E\big(y(t), 0, 0, 0\big) = \wt E(\bar y, \bar a^+, \bar a^-, \bar V^e\big) -\wt E(\bar y, 0, 0, 0)\\
\le & C( \Vert \bar V^{e}\Vert^{2}_{H^{1}} + |\bar a^-|^3).
\end{split}\]
These inequalities imply 
\[
\Vert V^e\Vert_{H^1}^2 \le C (\Vert \bar V^e \Vert_{H^1}^2 + |\bar a^-|^3 + |a^-|^3).
\]
It is straight forward to obtain from \eqref{E:temp-4} and \eqref{E:hpm1} 
\[
\Vert V^e\Vert_{H^1}^2 \le C (\Vert \bar V^e \Vert_{H^1}^2 + |\bar a^- - h^-(\bar y, \bar V^e)|^3), \quad t\in [0, T]. 
\]
By choosing $C_0$ appropriately, the above inequality implies $\Vert V^e\Vert_{H^1}<\delta$, then $T$ may be extended to $+\infty$ and we obtain the desired estimates on $\CW^{cs}(\CM_c)$. 
\end{proof}

\begin{remark} \label{R:cs}
The same proof as above actually implies that the estimates in Propositions \ref{offcs} \ref{offcu}, and \ref{attract-center} hold for any $\wt h^{cs}, \wt h^{cu} \in \Gamma_{\mu, \delta} \cap C^2$ if $Em^\perp \big(graph (h^{cs, cu})\big)$ are locally invariant under \eqref{eqintf}, {\it without} modification by cut-off.  
\end{remark}

This proposition implies the orbital stability of $\CM_c$ inside $\CW^{cs} (\CM_c)$ and the exponential stability of $\CW^c (\CM_c)$ inside $\CW^{cs} (\CM_c)$ as $t\to +\infty$. Parallel results hold for the center-unstable manifold $\CW^{cu}(\CM_c)$ as $t\to \infty$. Moreover, $\CM_c$ is orbitally stable inside $\CW^c(\CM_c)$ as $ t \to \pm \infty$. The estimates in Propositions \ref{offcs}, \ref{offcu}, and \ref{attract-center}  yield the following characterizations. 

\begin{proposition} \label{characterization}
There exists $\delta>0$ and $C>1$, such that, for any $U_0\in H^1$ satisfying $dist(U_0, M_c) \le \delta$
\begin{enumerate}
\item $U_0\in \CW^{cs} (\CM_c)$ if and only if the solution $U(t)$ to \eqref{eqintf} with $U(0)=U_0$ satisfies $dist\big(U(t), \CM_c\big) \le C\delta$ for all $t\ge 0$. 
\item $U_0\in \CW^{cu} (\CM_c)$ if and only if the solution $U(t)$ to \eqref{eqintf} with $U(0)=U_0$ satisfies $dist\big(U(t), \CM_c\big) \le C\delta$ for all $t\le 0$.
\item $U_0\in \CW^{c} (\CM_c)$ if and only if the solution $U(t)$ to \eqref{eqintf} with $U(0)=U_0$ satisfies $dist\big(U(t), \CM_c\big) \le C\delta$ for all $t\in \R$. 
 \end{enumerate} 
\end{proposition}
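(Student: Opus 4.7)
\medskip

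\noindent\textbf{Proof proposal for Proposition \ref{characterization}.} The plan is to deduce both directions from two bodies of estimates already established: the orbital stability and exponential contraction on the center-(un)stable manifolds from Proposition \ref{attract-center}, which give the ``only if'' direction, and the exponential escape away from the graph of $h^{cs, cu}$ from Propositions \ref{offcs} and \ref{offcu}, which supply the ``if'' direction via the contrapositive. The statements for $\CW^c(\CM_c)$ follow from intersecting the two one-sided characterizations.

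For the ``only if'' direction in the center-stable case, I would begin by choosing $\delta$ so small that the hypothesis $dist(U_0, \CM_c)\le \delta$ places $U_0$ within the regime covered by Remark \ref{R:metric}, so that writing $U_0 = Em^\perp(\bar y, \bar a^+, \bar a^-, \bar V^e)$ via \eqref{E:coord-1} forces $|\bar a^\pm|+\Vert\bar V^e\Vert_{H^1}\le C\delta \le C_0^{-1}\delta'$, where $\delta'$ and $C_0$ are as in the cut-off system and in Proposition \ref{attract-center}. Since $U_0\in \CW^{cs}(\CM_c)$ gives $\bar a^+ = h^{cs}(\bar y, \bar a^-, \bar V^e)$, the local invariance (Theorem \ref{centerstable}) keeps $U(t)$ on $\CW^{cs}(\CM_c)$, hence $a^+(t) = h^{cs}\big(y(t), a^-(t), V^e(t)\big)$. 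Proposition \ref{attract-center}(1) then yields exponential decay of $|a^-(t) - h^-(y(t), V^e(t))|$ and a uniform bound $\Vert V^e(t)\Vert_{H^1}^2 \le C(\Vert \bar V^e\Vert_{H^1}^2 + |\bar a^- - h^-(\bar y,\bar V^e)|^3)$. Using $h^\pm(y,0)=0$ and the Lipschitz continuity of $h^{cs}$ (so that $|a^+(t)|\le C(|a^-(t)|+\Vert V^e(t)\Vert_{H^1})$), one obtains $|a^+(t)|+|a^-(t)|+\Vert V^e(t)\Vert_{H^1}\le C\delta$ for all $t\ge 0$, and Remark \ref{R:metric} converts this into $dist(U(t),\CM_c)\le C\delta$.

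For the ``if'' direction I would argue by contrapositive. Suppose $U_0\notin \CW^{cs}(\CM_c)$ and $dist(U_0,\CM_c)\le\delta$; writing $U_0=Em^\perp(\bar y,\bar a^+,\bar a^-,\bar V^e)$ we have $\varepsilon := |\bar a^+ - h^{cs}(\bar y,\bar a^-,\bar V^e)|>0$. As long as the orbit remains in the bundle-coordinate neighborhood $Em^\perp(B^1(\delta')\oplus B^1(\delta')\oplus \CX_c^e(\delta'))$, Proposition \ref{offcs} gives
\[
|a^+(t) - h^{cs}(x^{cs}(t))|\ge e^{(\lambda_c - C\delta')t}\varepsilon.
\]
Combined with $\Vert h^{cs}\Vert_{C^0}\le \delta'$, this forces $|a^+(t)|$ to exceed any prescribed threshold after finite time, so the orbit must exit any $C\delta$-tubular neighborhood of $\CM_c$ for $C$ fixed by the coordinate equivalence in Remark \ref{R:metric}, contradicting the assumption. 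The center-unstable case is the time-reversed version using Proposition \ref{offcu} and Proposition \ref{attract-center}(2); the center-manifold case follows by noting $\CW^c(\CM_c)=\CW^{cs}(\CM_c)\cap\CW^{cu}(\CM_c)$ and combining the two-sided characterizations.

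The main obstacle, I expect, is purely bookkeeping: carefully selecting the threshold $\delta$ in the statement so that (i) the initial data lies in the domain of the bundle coordinate system and within the $C_0^{-1}\delta'$ ball needed for Proposition \ref{attract-center}, and (ii) the $C\delta$ tubular neighborhood in the conclusion still falls inside the $\delta'$-neighborhood in which Propositions \ref{offcs}--\ref{offcu} apply \emph{without} triggering the cut-off, so that the exponential escape really concerns the original (uncut) gKDV flow. Beyond this constant-chasing, no further technical difficulty should arise, since the heavy analytic work has been done in Propositions \ref{offcs}, \ref{offcu}, and \ref{attract-center}.
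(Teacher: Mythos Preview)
Your proposal is correct and matches the paper's approach exactly: the paper does not give a separate proof of Proposition \ref{characterization} but simply states that it follows from the estimates in Propositions \ref{offcs}, \ref{offcu}, and \ref{attract-center}, which is precisely the route you outline. In fact you have supplied more detail than the paper, including the constant-chasing caveat about keeping $\delta$ within the region where the cut-off is inactive.
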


\begin{remark} \label{R:unique}
Usually, $\CW^{cs}(\CM_c)$,  $\CW^{cu}(\CM_c)$, and $\CW^{c}(\CM_c)$ may not be unique due to the cut-off modification in their constructions. However,  the above characterization implies the uniqueness of the local center-stable, center-unstable, and the center manifolds of $\CM_c$ under \eqref{eqintf}. 
\end{remark}

\subsection{Local Invariant Manifolds of $\mathcal{M}$}

So far we have constructed local invariant manifolds for $\CM_c$ with a fixed $c>0$. Recall the scaling transformation $\CT^\lambda$ defined in \eqref{E:scaling}. From Proposition \ref{characterization}, it is clear that the local invariant manifold $\CW^\alpha (\CM_c)$, $\alpha \in \{cs, cu, c\}$ for different $c$ differ only by a rescaling, namely, for any $c>0$, 
\begin{equation} \label{E:scaling-inma}
\CW^{cs, cu, c} (\CM_c) = \{ \CT^{\sqrt{c}} U \mid U \in \CW^{cs, cu, c} (\CM_1)\}.  
\end{equation}
 
The following lemma indicates that local invariant manifolds of $\CM_c$ for nearby $c$ patch perfectly. Let $Em_c^\perp$ denote the embedding defined in \eqref{E:em-perp} for $c$. 

\begin{lemma}
For any $c>0$, there exists $\epi= \epi(c)>0$ such that if $|c_j- c|\le \epi$, $j=1,2$, then for $\alpha \in \{cs, cu, c\}$, 
$$\CW^{\alpha}(\CM_{c_2})\cap Em_{c_2}^\perp \left(B^{1}(\epi)\oplus B^1(\epi)\oplus \CX_{c_2}^e (\epi)\right) \subset \CW^{\alpha}(\CM_{c_1}).$$
 \end{lemma}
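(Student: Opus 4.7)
The plan is to leverage the characterization of $\CW^{cs,cu,c}(\CM_{c_j})$ in Proposition \ref{characterization} together with the fact that $\CM_{c_1}$ and $\CM_{c_2}$ are close in Hausdorff distance when $c_1,c_2$ are close. I shall focus on the center-stable case; the center-unstable case is the mirror image in backward time, and the center case follows by intersection. Two preliminary observations drive the argument: the smoothness of $c\mapsto Q_c\in H^1$ immediately yields, for $c_1,c_2$ in a fixed compact neighborhood of $c$,
\[
dist_H(\CM_{c_1},\CM_{c_2})\le C|c_1-c_2|,\qquad dist(U,\CM_{c_1})\le dist(U,\CM_{c_2})+C|c_1-c_2|;
\]
and since every ingredient of the construction in Sections \ref{construction}--\ref{smoothness of invariant manifolds} (the projections $\Pi_{c,y}^\alpha$, the eigenfunctions $V_c^\pm$, the eigenvalue $\lambda_c$, the coercivity constant $A_c$, the tubular radius $\delta$, the Lipschitz constant $\mu$, and the smallness conditions \eqref{E:parameter-1}--\eqref{E:parameter-3}, \eqref{parameter-set-2}) depends continuously on $c$, the thresholds $\delta,C$ appearing in Propositions \ref{characterization} and \ref{attract-center} can be chosen uniform in $c'$ over a small closed neighborhood of $c$. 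I shall fix such common constants $\bar\delta,\bar C$ and choose $\epi(c)$ so that both $|c_j-c|\le\epi$ fall into this neighborhood.

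Now take $U_0\in\CW^{cs}(\CM_{c_2})\cap Em_{c_2}^\perp\big(B^1(\epi)\oplus B^1(\epi)\oplus \CX_{c_2}^e(\epi)\big)$, so that $dist(U_0,\CM_{c_2})\le C\epi$ by Remark \ref{R:metric}. Writing $U(t)=Em_{c_2}^\perp(y(t),a^+(t),a^-(t),V^e(t))$ and applying part (1) of Proposition \ref{attract-center} at $c_2$ (valid after shrinking $\epi$ so that $\epi\le C_0^{-1}\bar\delta$) produces, for all $t\ge 0$,
\[
|a^-(t)-h^-(y(t),V^e(t))|\le \bar C\epi,\qquad \|V^e(t)\|_{H^1}\le \bar C\epi.
\]
Since $U(t)\in\CW^{cs}(\CM_{c_2})$, one has $a^+(t)=h^{cs}(y(t),a^-(t),V^e(t))$, and the vanishing $h^{cs}(y,0,0)=h^-(y,0)=0$ together with $Lip(h^{cs}),Lip(h^-)\le\mu$ yields $|a^+(t)|+|a^-(t)|\le\bar C\epi$ as well. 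Therefore $dist(U(t),\CM_{c_2})\le\bar C\epi$ for every $t\ge 0$, and combining with the triangle inequality above,
\[
dist(U(t),\CM_{c_1})\le \bar C\epi + C|c_1-c_2|\le C'\epi,\qquad \forall\,t\ge 0.
\]
A final shrinkage of $\epi$ so that $C'\epi\le\bar C\bar\delta$ (and $C'\epi\le\bar\delta$, so the hypothesis of Proposition \ref{characterization} at $c_1$ is met) then allows the converse direction of Proposition \ref{characterization} applied to $\CM_{c_1}$ to conclude $U_0\in\CW^{cs}(\CM_{c_1})$. The center-unstable inclusion is identical in backward time via Proposition \ref{offcu} and part (2) of Proposition \ref{attract-center}, and the center case follows from $\CW^c(\CM_{c_j})=\CW^{cs}(\CM_{c_j})\cap\CW^{cu}(\CM_{c_j})$.

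The only genuine obstacle I anticipate is the uniformity in $c$ of the various constants. This is essentially a bookkeeping task: one must trace the $c$-smoothness already established for $\Pi_{c,y}^\alpha$, $V_c^\pm$, $\lambda_c$, $L_{c,y}^e$, and $A_c$ (Lemmas \ref{de1} and \ref{decompvb}) through all the space--time estimates and contraction arguments, confirming that $\delta,\mu,A$ satisfying the smallness conditions can be chosen common across a compact range of $c'$. No essentially new estimate is required beyond what has already been established.
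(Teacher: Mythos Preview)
Your proposal is correct and follows essentially the same approach as the paper: both hinge on Proposition \ref{characterization} together with the Hausdorff closeness of $\CM_{c_1}$ and $\CM_{c_2}$. The paper's proof is extremely terse---it only writes out the estimate $dist(Q_{c_2}(\cdot-(c_2-c_1)t),\CM_{c_1})\le C|c_2-c_1|$ (which is exactly your ``triangle inequality'' observation that $\CM_{c_1}$ and $\CM_{c_2}$ are $C|c_1-c_2|$-close) and then defers everything else to Proposition \ref{characterization}; you have spelled out the forward-time orbital stability step via Proposition \ref{attract-center} (which is precisely what underlies the ``only if'' direction of Proposition \ref{characterization}) and the uniformity-in-$c$ bookkeeping, so your write-up is in fact more complete than the paper's.

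One small point worth making explicit: the solution $U(t)$ in the $c_2$-traveling frame differs from the solution in the $c_1$-traveling frame only by the spatial translation $x\mapsto x+(c_1-c_2)t$, and since each $\CM_{c_j}$ is translation invariant the quantity $dist(U(t),\CM_{c_j})$ is the same in either frame; this is what lets you transfer the forward-time bound obtained at $c_2$ directly into the hypothesis of Proposition \ref{characterization} at $c_1$.
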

 
\begin{proof}
$Q_{c_2}(x- c_2t)$ is a solution to \eqref{gkdv}. In the traveling frame $(t,x-c_1 t)$, it becomes $Q_{c_2}\left(x-( c_2-c_1)t\right).$ 
Since 
\[
\Vert Q_{c_2}\left(x-(c_2-c_1)t\right)-Q_{c_1}\left(x-( c_2-c_1)t\right)\Vert_{H^1}\le C|c_2- c_1|
\] 
and $Q_{c_1}\left(x-(c_2-c_1)t\right)\in \CM_{c_1}$,
we have 
$$\inf_{y\in \R}\Vert Q_{c_2}\left(\cdot-(c_2-c_1)t\right)-Q_{c_1}\Vert_{H^1}\le C|c_2-c_1|.$$ 
Then the desired result follows by Proposition \ref{characterization}. 

\end{proof}

Consequently, we can patch $\CW^{cs, cu, c}(\mathcal{M}_c)$ with different $c$ to form the center-stable manifold of $\CM$. In fact, let 
\begin{equation}
\CW^{cs, cu, c}(\CM)= \bigcup_{c>0}\CW^{cs, cu, c}(\CM). 
\end{equation}

The above lemma implies that $\CW^{cs, cu, c} (\CM)$ is a smooth codim-1 submanifold in $H^1$.

\begin{remark} \label{R:scalingCS}
In fact, due to the scaling invariance \eqref{E:scaling-inma}, 
$\CW^{cs, cu, c}(\CM_c)$ are invariant under the rescaling \eqref{E:scaling}.
\end{remark}

\begin{remark}  \label{R:stable-unstable}
The stable and unstable manifolds can be constructed through a {\it simpler} procedure. Thanks to their uniqueness, one may construct stable and unstable manifolds of a single solitary wave $Q_c$, and then those of $Q_c(\cdot+y)$ can be obtained simply by translation. In this procedure, 
since $y=0 \in \R^3$ is fixed in the construction, the only obstacle preventing the classical invariant manifold theory to be applicable straightforwardly is the derivative loss in the nonlinearity, which can be overcome by the smoothing estimates in Section \ref{LinearAnalysis}. Actually, with the smoothing estimates, one may carry out the construction following the approach in \cite{CL}.  
\end{remark}

\begin{remark}
In \cite{Com}, Combet constructed solutions converging to solitary waves. From the point of view of dynamical systems theory, the solutions constructed by Combet must locate in the stable manifolds of solitary waves. 
\end{remark}

\bibliography{kdv}{}
\bibliographystyle{plain}

\end{document}